\tikzset{->-/.style={decoration={  markings,  mark=at position #1 with
			{\arrow{>}}},postaction={decorate}}}
\tikzset{-<-/.style={decoration={  markings,  mark=at position #1 with
			{\arrow{<}}},postaction={decorate}}}
\newenvironment{red}{\relax\color{red}}{\relax}
\newenvironment{blue}{\relax\color{blue}}{\hspace*{.5ex}\relax}
\newcommand{\ber}{\begin{red}}
\newcommand{\er}{\end{red}}
\newcommand{\beb}{\begin{blue}}
\newcommand{\eb}{\end{blue}}
\theoremstyle{plain}
\newtheorem{theorem}{Theorem}[section]
\newtheorem{thmx}{Theorem}
\newtheorem{lemma}[theorem]{Lemma}
\newtheorem{corollary}[theorem]{Corollary}
\newtheorem{proposition}[theorem]{Proposition}
\theoremstyle{definition}
\newtheorem{definition}[theorem]{Definition}
\newtheorem{remark}[theorem]{Remark}
\newtheorem{notations}[theorem]{Notations}
\numberwithin{equation}{section}
\def\surf{\mathbf{S}}                       %FST's surface
\def\TT{\mathbf{T}}
\def\PP{\mathbf{P}}
\def\MM{\mathbf{M}}
\def\<{\langle}
\def\>{\rangle}
\renewcommand{\k}{\mathbf{k}}
\renewcommand{\mod}{\operatorname{mod}}
\newcommand{\Int}{\operatorname{Int}}
\newcommand{\Hom}{\operatorname{Hom}}
\newcommand{\Ext}{\operatorname{Ext}}
\newcommand{\udim}{\operatorname{\underline{dim}}}
\begin{document}
%\iffalse
\title[An Intersection-Dimension Formula for preprojective modules of type $\widetilde{D}_n$]{An Intersection-Dimension Formula for preprojective modules of type $\widetilde{D}_n$}
\author[B. Jackson]{Blake Jackson}
\address{Department of Mathematics, University of Alabama,
	Tuscaloosa, AL 35487, U.S.A.}
\email{bajackson9@crimson.ua.edu}

%\subjclass[2010]{Primary ??; Secondary ??}

\begin{abstract}
    This paper proves the existence of an intersection-dimension formula for preprojective modules over path algebras of type $\widetilde{D}_n$.
    Identical intersection-dimension formulas have previously been provided for modules over path algebras of type $A_n, D_n,$ and $\widetilde{A}_n$ due to Schiffler as well as He, Zhou, and Zhu.
    These modules can be represented geometrically by some set of curves on special surfaces.
    The intersection-dimension formula is an equality of the intersection number between two curves and the dimensions of the first extension spaces between the two modules they represent.
    This paper takes a direct approach to proving the formula utilizing the known structure of the Auslander-Reiten quiver of type $\widetilde{D}_n$.
    Future work will extend the formula to the entire module category (not just the preprojective modules) over path algebras of type $\widetilde{D}_n$.
\end{abstract}

\maketitle

%%%%%%%%%%%%%%%%%%%%%%%%%%%%%%%%%%%%%%%%%%%%%%%%%%%%%%%%%%%%%%%%%
\section{Introduction}
%%%%%%%%%%%%%%%%%%%%%%%%%%%%%%%%%%%%%%%%%%%%%%%%%%%%%%%%%%%%%%%%%

Cluster categories were introduced simultaneously in \cite{buan_tilting_2006,caldero_triangulated_2006} in order to further study the cluster algebras introduced by Fomin and Zelevinsky \cite{fomin_cluster_2002,fomin_cluster_2003}.
Cluster categories are a modified version of the module category of a hereditary algebra (in fact, they are an orbit category of the derived category of the module category).
In \cite{caldero_triangulated_2006}, the authors realize the cluster categories of type $A_n$ by constructing a category of diagonals of a regular polygon with $n+3$ vertices.
The construction of the Auslander-Reiten quiver from a triangulation of some special surfaces was introduced in \cite{caldero_quivers_2006,schiffler_geometric_2008}.
Schiffler \cite{schiffler_geometric_2008} provided a model for cluster (module) categories of type $D_n$ by using triangulations of a once-punctured $n$-gon.
Recently, He, Zhou, and Zhu \cite{he_geometric_2023} provided a geometric model for the module category of skew-gentle or clannish algebras (a large class of representation-tame finite-dimensional algebras introduced in \cite{geis_auslander-reiten_1999} which can be realized from partially triangulated surfaces). 
The skew-gentle algebras cover the classical path algebras of type $A_n$ as well as path algebras of type $\widetilde{A}_n$, but not $D_n$ or $\widetilde{D}_n$.

The main feature of these geometric models in regard to the module category is the so-called intersection-dimension formula for the modules over these algebras.
Modules are represented in the geometric model by some set of admissible tagged edges in their respective surface. 
The intersection-dimension formula relates the geometric data (intersections of edges) with the homological data of the modules they represent (dimension of the first extension spaces between the two modules).
The former is a straightforward calculation after the model is given; the latter can be highly nontrivial, especially for path algebras over quivers of affine Dynkin type (for example, quivers of type $\widetilde{D}_n$).

This paper continues this direction of research with an intersection-dimension formula for the preprojective modules of type $\widetilde{D}_n$, the last of the acyclic path algebras that can be realized from triangulations of a surface.
The algebraic properties of preprojective modules over acyclic path algebras are well studied (see, for example, \cite{assem_elements_2006,draxler_existence_1996,gabriel_representations_1997,ringel_tame_1984,simson_elements_2007-1,simson_elements_2007}).
However, an intersection-dimension formula for preprojective modules over algebras of type $\widetilde{D}_n$ has yet to emerge.

Schiffler's \cite{schiffler_geometric_2008} proof of the existence of the intersection-dimension formula for modules of type $D_n$ bears some similarity to my proof for preprojective modules of type $\widetilde{D}_n$.
In the type $D_n$ case, there are finitely many isomorphism classes of indecomposable modules and the largest dimension of an extension space is 2.
Since there are finitely many homotopy classes of admissible tagged edges in a once-punctured monogon, counting the intersections between two admissible tagged edges just requires knowledge of the relative position of the endpoints. 
Unfortunately, in the type $\widetilde{D}_n$ case, there are infinitely many isomorphism classes of indecomposable preprojective tagged edges with the same endpoints.

On the other hand, He, Zhou, and Zhu's \cite{he_geometric_2023} proof relies heavily on the theory of skew-gentle algebras that were introduced in \cite{geis_auslander-reiten_1999}.
An important distinction between their work and this paper is the geometric model considered.
The geometric model used by He, Zhou, and Zhu requires the punctured, marked surface to be \textit{partially} triangulated.
Due to the work of Fomin, Shapiro, and Thurston \cite{fomin_cluster_2008}, it's not hard to see that He, Zhou, and Zhu's techniques are unable to cover the type $\widetilde{D}_n$ modules. 
Their technique does cover the intersection-dimension formulas for modules of types $A_n$ and $\widetilde{A}_n$, the latter being an addition to the list of module categories exhibiting an intersection-dimension formula.

This paper takes a direct approach to prove the existence of an intersection-dimension formula for a path algebra of type $\widetilde{D}_n$, relying on a theorem detailing the structure of the preprojective component of the Auslander-Reiten quivers of affine type.
For the benefit of the reader, detailed proofs are given along with extensive examples at the end of the paper.
Future work will extend the formula in this paper to the entire module category of type $\widetilde{D}_n$ and, hopefully, to the cluster category as well.
Beyond this, the next step to furthering this line of research is to provide geometric objects which correspond to the quivers (not even modules or clusters) of exceptional types $E_i$ and $\widetilde{E}_i$ for $i = 6,7,8$.

The main result of the paper is the following theorem:

\begin{thmx} \label{main-thm}
    Let $\surf$ be a triangulated, twice-punctured marked surface whose triangulation $\TT$ corresponds to an acyclic quiver $Q^\TT$ of type $\widetilde{D}_n$, $\k$ be an algebraically closed field, and $\k Q^\TT$ be the path algebra over $Q^\TT$. 
    Then given any two preprojective tagged edges $(\gamma_1, \kappa_1)$ and $(\gamma_2, \kappa_2)$ (not necessarily distinct), 
    $$\Int((\gamma_1, \kappa_1), (\gamma_2, \kappa_2)) = \dim_\k \Ext^1 (M_1, M_2) + \dim_\k \Ext^1 (M_2, M_1)$$ where $M_i = M(\gamma_i,\kappa_i)$ is a $\k Q^\TT$-module for $i =1,2$, $\Int$ is the intersection number between two preprojective tagged edges, and $\Ext^1(M,N)$ is the group of extensions of $N$ by $M$ viewed as a $\k$-vector space spanned by short exact sequences in $\mod \k Q^\TT$. 
\end{thmx}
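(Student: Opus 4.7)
The plan is to reduce the statement to indecomposable tagged edges, invoke Auslander--Reiten theory to recast the right-hand side combinatorially, and then match the resulting expression with an explicit intersection count obtained by a double induction on $\tau^{-1}$-orbits. Both sides of the claimed equality are additive: $\Int$ is additive under disjoint unions of tagged edges, and $\dim_\k \Ext^1(M_1,M_2) + \dim_\k \Ext^1(M_2,M_1)$ is additive under direct sum decompositions of $M_1$ and $M_2$. So the first step is to reduce to the case where each $(\gamma_i,\kappa_i)$ is an indecomposable preprojective tagged edge and each $M_i$ is an indecomposable preprojective $\k Q^\TT$-module. Using the bijection between preprojective tagged edges and indecomposable preprojective modules built into the geometric model, one may then index both sides by pairs $(i,k),(j,\ell)$ with $M_1 \cong \tau^{-k} P_i$ and $M_2 \cong \tau^{-\ell} P_j$, where $i,j$ are vertices of $Q^\TT$ and $k,\ell \ge 0$.

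On the homological side I would use the fact that $\k Q^\TT$ is hereditary, together with the Auslander--Reiten formula
$$\dim_\k \Ext^1(M,N) \;=\; \dim_\k \overline{\Hom}(N, \tau M),$$
and the fully explicit description of the preprojective component of the AR quiver of type $\widetilde{D}_n$ as a quotient of $\mathbb{N} Q^\TT$. In this regime $\Hom$-spaces between indecomposable preprojectives are controlled by counting paths in the preprojective component modulo mesh relations, so the right-hand side becomes a transparent combinatorial function of $(i,k,j,\ell)$.

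On the geometric side I would lift the action of $\tau^{-1}$ on modules to a concrete self-homeomorphism of the twice-punctured marked surface, realized as a rotation of endpoints at the punctures combined with a controlled Dehn twist around a distinguished simple closed curve. Once this lift is pinned down, $\Int((\gamma_1,\kappa_1),(\gamma_2,\kappa_2))$ also becomes an explicit function of $(i,k,j,\ell)$, and the theorem reduces to showing that two explicit numerical functions of these four parameters agree.

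The step I expect to be the main obstacle is precisely the point flagged in the introduction: there are infinitely many indecomposable preprojective tagged edges with any fixed pair of endpoints, so their intersection numbers must grow in lock-step with the Hom/Ext-dimensions produced by the AR quiver. I would handle this by a double induction on $k$ and $\ell$. The base case $k=\ell=0$ (both modules projective) is read off directly from the triangulation $\TT$; the inductive step pairs the almost split sequence ending at $\tau^{-k}P_i$ on the homological side with one application of the geometric $\tau^{-1}$ operation on the geometric side, turning both sides into the same linear recursion. Some additional care will be needed to treat uniformly the four branching vertices of $\widetilde{D}_n$ corresponding to the two punctures, whose preprojective orbits interact with the tagged structure differently from the interior linear vertices, and to verify that the geometric Dehn-twist contribution exactly reproduces the symmetric sum $\dim \Ext^1(M_1,M_2)+\dim \Ext^1(M_2,M_1)$ rather than either summand individually.
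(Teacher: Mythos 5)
Your framework—establish the curve/module dictionary, trade $\Ext^1$ for $\Hom$ via the Auslander--Reiten formula, and then compare two combinatorial descriptions indexed by the preprojective component $\mathbb{N}(Q^\TT)^{\mathrm{op}}$—is the same as the paper's. But the mechanism you propose for the comparison is genuinely different. The paper does a direct, non-inductive case analysis on the relative position of $M_1$ and $M_2$ in the preprojective component (same level; $M_2$ in the ``type $A_n$ part'' with $M_1$ in either part; $M_2$ in a ``type $D_n$ part'' with $M_1$ in the $A_n$ part; both in $D_n$ parts, same or opposite), with explicit closed-form descriptions of the $\Hom$-dimensions as ``maximal slanted rectangles'' and ``maximal triangles'' in $\mathbb{N}Q^{\mathrm{op}}$, matched one case at a time against the geometric tagged rotation $\rho$. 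You instead propose a double induction on $(k,\ell)$ in which the almost split sequence at each step supplies a linear recursion that both $\dim\Hom$ and $\Int$ are shown to satisfy. That is a cleaner and more systematic organization and would avoid writing out the hammock geometry explicitly; the price is that you would need a genuine geometric lemma to the effect that $\Int((\gamma_1,\kappa_1),-)$ satisfies the mesh recursion under the three classes of elementary moves.

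That lemma is exactly where your proposal stops short, and it is the entire content of the theorem. You assert that ``both sides turn into the same linear recursion'' but do not indicate why a single application of the tagged rotation to one curve changes the intersection number with a fixed second curve in a way consistent with the almost split sequence. This is nontrivial precisely because $\rho$ acts differently on the three mesh types (both ends on $\partial S$ away from a digon; both ends on $\partial S$ cutting off a once-punctured digon; one end at a puncture), and the base of the induction is not just ``both projective'' but needs the full range of pairs on adjacent levels. Two smaller points: first, a ``preprojective tagged edge'' in the paper is a single curve, so the module side is already indecomposable and your additivity reduction is moot here (though it is the right observation for a future extension to decomposable objects). Second, your description of $\tau^{-1}$ as ``rotation of endpoints at the punctures combined with a controlled Dehn twist'' is not the paper's operation: the tagged rotation $\rho$ moves boundary endpoints one marked point counterclockwise and swaps $\kappa$ at punctures, with no separate twist; the apparent winding is an emergent effect of iterating the boundary shift, not an explicit Dehn twist, and treating it as one would need a careful justification that it preserves the set of admissible tagged edges and their intersection numbers in the required sense. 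Also, $\widetilde{D}_n$ has four leaves and two branch vertices, not four branching vertices, so the special-handling bookkeeping you flag should be organized around the two branch vertices (equivalently, the two punctures).
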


The paper is organized as follows.
Section 2 defines the basic objects and notation that will be used in the remainder of the paper. 
Next, Section 3 describes the main construction of the geometric model and defines a fundamental operation on elements of the geometric model--the tagged rotation of an admissible tagged edge.
Sections 4 and 5 define the two categories which are used to prove Theorem~\ref{main-thm} along with the Auslander-Reiten translation.
Section 6 proves the equivalence of these categories.
Finally, Section 7 is dedicated to the proof of Theorem~\ref{main-thm} along with extensive examples to illustrate the result.

Throughout this paper, $\k$ is an algebraically closed field.
For any $\k$-algebra $\mathcal{A}$, we consider only finite-dimensional left $\mathcal{A}$-modules where $\mod \mathcal{A}$ is the abelian category of these modules.
Finally, this paper uses the convention that an affine Dynkin diagram of type $\widetilde{D}_n$ contains $n+1$ vertices.

%%%%%%%%%%%%%%%%%%%%%%%%%%%%%%%%%%%%%%%%%%%%%%%%%%%%%%%%%%%%%%%%%
\section{Quivers and Path Algebras}
%%%%%%%%%%%%%%%%%%%%%%%%%%%%%%%%%%%%%%%%%%%%%%%%%%%%%%%%%%%%%%%%%

\begin{definition}\label{def-quiver}
    A \textbf{quiver} $Q = (Q_0, Q_1)$ is a finite digraph without loops and directed 2-cycles where $Q_0$ is the set of vertices and $Q_1$ is the set of arrows.
    The elements of $Q_0$ are indexed by the numbers $1, 2, ... n$. 
    If $\alpha \in Q_1$ with $\alpha: i \to j$, then we say $s(\alpha) = i$ is the \textbf{source} of $\alpha$ and $t(\alpha) = j$ is the \textbf{target} of $\alpha$.
    A quiver $Q$ is \textbf{acyclic} if it contains no directed cycles of any length.
\end{definition}

\begin{definition}
    Let $i \in Q_0$ be a vertex in $Q$. 
    Then $\mu_i Q$ is the \textbf{mutation} of $Q$ at vertex $i$ and is the quiver obtained from $Q$ in the following way:
    \begin{enumerate}
        \item for each path $j \to i \to k$ in $Q$ of length 2 passing through $i$, add an arrow $j \to k$ in $\mu_i Q$
        \item reverse all arrows which begin or end at vertex $i$
        \item delete any 2-cycles that have appeared as a result of step 1.
    \end{enumerate}
    A quiver $Q'$ is \textbf{mutation-equivalent} to $Q$ if there is a finite sequence of vertices $i_1, i_2, ..., i_k$ in $Q_0$ such that $Q' = \mu_{i_1}\mu_{i_2}...\mu_{i_k}Q$.
    If $G$ is a finite or affine Dynkin diagram, then a quiver $Q$ is \textbf{of type $G$} if $Q$ is mutation equivalent to an acyclic orientation of $G$. 
\end{definition}

Quivers were introduced by Gabriel \cite{gabriel_unzerlegbare_1972} and have become fundamental objects in the study of cluster algebras developed by Fomin and Zelevinsky \cite{fomin_cluster_2002,fomin_cluster_2003}.
While this paper is less concerned with cluster theory, developments in the field of cluster algebras have led to significant contributions in other areas of mathematics, such as the representation theory of algebras.
This paper is concerned primarily with quivers of type $\widetilde{D}_n$; however, care is taken to state results in the most general sense.
It is also worth noting that mutation acts as an involution on quivers (and clusters \cite{fomin_cluster_2002}).
This means that for a quiver $Q$, $\mu_i \mu_i Q = Q$.

\begin{definition}
    Let $Q$ be a finite, connected quiver and $\k$ be an algebraically closed field.
    Then $\k Q$ is the \textbf{path algebra over $Q$} with basis given by the set of all directed paths in $Q$ and multiplication between two basis elements given by path concatenation.
    If $Q$ is acyclic then the basis of $\k Q$ will have finitely many elements; in this case, the algebra is \textbf{finite-dimensional}.
    If $G$ is a finite or affine Dynkin diagram, then $\k Q$ is \textbf{of type $G$} if $Q$ is of type $G$. 
\end{definition}

\begin{definition}
    A ring or an algebra is said to be \textbf{hereditary} if all submodules of projective modules are projective.
\end{definition}

The following is a classical result of the representation theory of finite-dimensional algebras.

\begin{theorem}
    Every finite-dimensional hereditary algebra over an algebraically closed field is Morita equivalent to a path algebra over some acyclic quiver $Q$.
\end{theorem}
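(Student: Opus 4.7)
The plan is to reduce to the basic case, build the Gabriel quiver, and then exploit hereditariness to rule out any relations. First I would use the standard Morita-theoretic reduction: every finite-dimensional $\k$-algebra $A$ is Morita equivalent to its basic algebra $A^b := eAe$, where $e$ is a sum of one representative from each conjugacy class of primitive idempotents. Hereditariness is preserved under Morita equivalence (it is a property of $\mod A$, which is equivalent to $\mod A^b$), so it suffices to prove the statement when $A$ is basic.

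Next, I would associate the Gabriel quiver $Q = Q_A$ to the basic hereditary algebra $A$. Since $\k$ is algebraically closed and $A$ is basic, Wedderburn--Artin gives $A/\operatorname{rad}(A)\cong \k^n$, so one can choose a complete set $\{e_1,\dots,e_n\}$ of primitive orthogonal idempotents lifting this decomposition. I would take the vertex set of $Q$ to be $\{1,\dots,n\}$ and put $\dim_\k e_j\bigl(\operatorname{rad}(A)/\operatorname{rad}(A)^2\bigr)e_i$ arrows from $i$ to $j$. Lifting an arrow-basis to elements of $e_j\operatorname{rad}(A)e_i$ and extending multiplicatively yields a $\k$-algebra homomorphism $\varphi:\k Q\to A$. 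A standard Nakayama-lemma argument shows $\varphi$ is surjective, so $A\cong \k Q/I$ for an admissible ideal $I\subseteq \k Q$.

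The heart of the argument is showing $I=0$. Here I would use the standard homological interpretation: for any admissible presentation $A\cong \k Q/I$, the number of elements in a minimal generating set of $I$ lying in $e_j(\k Q)e_i$ equals $\dim_\k \operatorname{Ext}^2_A(S_i,S_j)$, where $S_i=Ae_i/\operatorname{rad}(A)e_i$ is the simple top of the indecomposable projective $Ae_i$. Since $A$ is hereditary, $\operatorname{Ext}^2_A(-,-)$ vanishes identically on $\mod A$, so no minimal relations are needed and $I=0$. Hence $A\cong \k Q$ as $\k$-algebras. Finally, because $A$ is finite-dimensional, $\k Q$ must have finitely many paths; since any oriented cycle in $Q$ would yield infinitely many basis paths, $Q$ is necessarily acyclic.

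The routine Morita reduction and the construction of $Q_A$ are formal, and acyclicity at the end is immediate from finite-dimensionality. The main obstacle — and the step that I would spend the most care on — is the homological identification of relations with $\operatorname{Ext}^2$-classes of simples, which requires constructing enough of a minimal projective resolution of each simple $S_i$ to extract the $\operatorname{Ext}^2$ term and match it with the admissible generators of $I$. Once this dictionary is in place, hereditariness forces $I=0$ and the theorem follows.
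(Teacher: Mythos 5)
The paper states this as a classical result and gives no proof, so there is nothing internal to compare against. Your argument is the standard textbook proof (as found, for example, in Assem–Simson–Skowro\'nski): reduce to the basic case via Morita theory, construct the Gabriel quiver $Q_A$, produce the surjection $\k Q_A \to A$ with admissible kernel $I$ by lifting an arrow-basis and invoking Nakayama, identify minimal relations in $e_j I e_i$ with $\dim_\k \Ext^2_A(S_i,S_j)$, and conclude $I=0$ from global dimension $\leq 1$; acyclicity then follows from finite-dimensionality. The reasoning is correct, and your assessment of where the real work lies is accurate: the $\Ext^2$--relations dictionary is the only nontrivial ingredient, requiring the first few terms of a minimal projective resolution of each simple built from the admissible presentation $\k Q/I$. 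One small terminological nit: what you call ``conjugacy class of primitive idempotents'' is more standardly phrased as choosing one idempotent per isomorphism class of indecomposable projectives, but the intent is unambiguous and the argument goes through.
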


Hereditary algebras are algebras with global dimension 1.
This already tells us a great deal about the homological properties of these algebras: if $\mathcal{A}$ is a hereditary algebra, then $\Ext^i_{\mathcal{A}}(M,N) = 0$ for $i > 1$ and any $M,N \in \mod \mathcal{A}$.
This restricts the study of extensions to $\Ext^1_{\mathcal{A}}$.

%%%%%%%%%%%%%%%%%%%%%%%%%%%%%%%%%%%%%%%%%%%%%%%%%%%%%%%%%%%%%%%%%
\section{Main Construction and Tagged Rotation}
%%%%%%%%%%%%%%%%%%%%%%%%%%%%%%%%%%%%%%%%%%%%%%%%%%%%%%%%%%%%%%%%%

\begin{definition}\label{def-tagged}
    A \textbf{punctured, marked surface with boundary} as defined in \cite{fomin_cluster_2008} is a triple $\surf = (S, \MM, \PP)$ where $S$ is an oriented surface with boundary, $\PP \in S \setminus \partial S$ is the set of punctures, and $\MM \in \partial S$ is the set of marked points on the boundary of $S$. 
    An \textbf{admissible tagged edge or curve} $(\gamma, \kappa)$ in a punctured, marked surface with boundary $\surf$ is a continuous map $\gamma: [0,1] \to \surf$ and a map $\kappa: \{ t \mid \gamma(t) \in \PP \} \to \{0,1\}$ such that 
    \begin{enumerate}
        \item $\gamma(0), \gamma(1) \in \PP \cup \MM$,
        \item $\gamma(t) \in \Delta^0 = \surf \setminus (\PP \cup \partial S) \text{ for } 0 < t < 1$,
        \item $\gamma$ or its self-intersection do not cut out a once-punctured monogon (Figure~\ref{fig:monogon}), and
        \item $\gamma$ is not homotopic to a boundary segment of $\surf$
    \end{enumerate}
    Let $E^\times$ be the set of admissible tagged edges $(\gamma, \kappa)$. 
    If $\gamma$ has both endpoints in $\MM$, then the domain of $\kappa$ is $\emptyset$ and for convenience we write $\kappa = \emptyset$.
\end{definition}

\begin{figure}
    \centering
    \includegraphics[scale = 0.5]{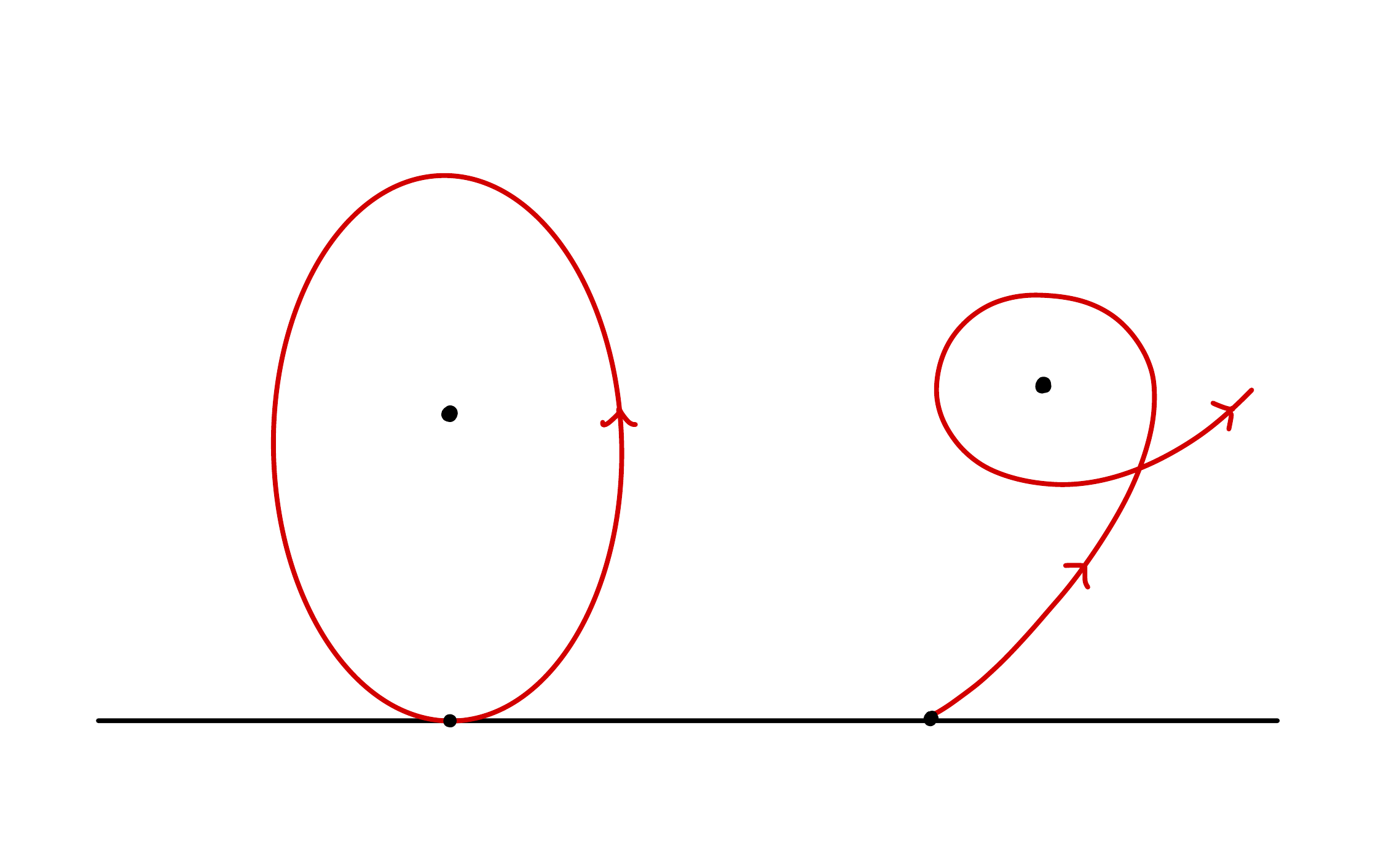}
    \caption{Non-admissible curves cutting out once-punctured monogons.}
    \label{fig:monogon}
\end{figure}

\begin{remark}
    For any admissible tagged edge with one endpoint in $\PP$ and the other in $\MM$, we will assume that $\gamma(0) \in \PP$.
    This reduces the number of choices to make later and has no effect on the results.
\end{remark}

\begin{notations}
    Graphically the map $\kappa$ is indicated by using a notch or bowtie. 
    If $\gamma(0) \in \PP$ and $\kappa(0) = 1$, then $\gamma$ will have a notch or bowtie drawn near $\gamma(0)$. 
    Otherwise, the edge will be plain near $\gamma(0)$.
    For example, in Figure~\ref{fig:badtriangles}, the edge labeled 7 in the left-hand panel and the edges labeled 1 and 2 in the right-hand panel all have $\kappa(0) = 1$.
\end{notations}

The existence of edges ending in a puncture slightly complicates the definition of intersection between two admissible tagged edges.

\begin{definition}\label{def-puctintersection}
    Two admissible tagged edges $(\gamma_1,\kappa_1), (\gamma_2,\kappa_2) \in E^\times$ are said to \textbf{intersect in a puncture} if 
    \begin{enumerate}
        \item $\gamma_1(t_1) = \gamma_2(t_2) \in \PP$
        \item $\kappa_1(t_1) \neq \kappa_2(t_2)$
        \item If $\gamma_1$ and $\gamma_2$ are homotopic as untagged edges, then $\gamma_1(1 - t_1) = \gamma_2(1 - t_2) \in \PP$ and $\kappa_1(1 - t_1) \neq \kappa_2(1 - t_2)$.
    \end{enumerate}
\end{definition}

In words, there are two cases that result in punctured intersections.
Two admissible tagged edges starting at the same puncture and ending at two different points in $\MM$ have 1 punctured intersection if they have different tags at the puncture. 
If two admissible tagged edges have their endpoints in the same two punctures with different tagging at each end, they will have 2 punctured intersections, one for each endpoint in a puncture.
However, as will be shown in Section 4, admissible tagged edges with both endpoints in the punctures are not preprojective and are therefore beyond the scope of this paper.

\begin{definition}
    Let $(\gamma_1,\kappa_1), (\gamma_2,\kappa_2) \in E^\times$ be two admissible tagged edges and $\Delta^0 = \surf \setminus (\PP \cup \partial S)$. Then the \textbf{intersection number} between these admissible tagged edges is 
    $$\Int( (\gamma_1,\kappa_1), (\gamma_2,\kappa_2) ) \coloneqq \text{min}\{ \text{Card} (\gamma_1 \cap \gamma_2 \cap \Delta^0 ) \} + \text{Card}( \mathfrak{P}( (\gamma_1,\kappa_1), (\gamma_2,\kappa_2) ) ) \geq 0$$ 
    where $ \mathfrak{P}( (\gamma_1,\kappa_1), (\gamma_2,\kappa_2) )$ counts the number of punctured intersections, Card is the cardinality of a set, and all curves $\gamma_1, \gamma_2$ are considered up to homotopy relative to their endpoints.
    Two admissible tagged edges \textbf{cross} if $\Int( (\gamma_1,\kappa_1), (\gamma_2,\kappa_2) ) \neq 0$.
    Intersections that occur in $\Delta^0$ are called \textbf{normal intersections}.
\end{definition}

\begin{definition}\label{def-tri}
    Let $\surf$ be a punctured, marked surface with boundary. 
    A \textbf{triangulation} $\TT$ of $\surf$ is a maximal collection of non-crossing admissible tagged edges in $\surf$. 
    Note that ``non-crossing'' excludes admissible tagged edges with self-intersections and punctured intersections from the triangulation.
\end{definition}

\begin{definition}\label{def-qt}
    Let $\surf$ be a punctured, marked surface with boundary and $\TT$ be a triangulation of $\surf$. 
    Then $Q^\TT$, the \textbf{quiver associated to} $\TT$, is the following quiver:
    \begin{itemize}
        \item The elements in $Q_0^\TT$ are in bijection with the tagged edges $(\gamma,\kappa) \in \TT$
        \item There is an edge $i \to j$ in $Q_1^\TT$ if and only if 
        \begin{enumerate}
            \item the tagged edges representing $i$ and $j$ in $\TT$ share a common endpoint in $\alpha_0 \in \MM \cup \PP$
            \item $j$ is the direct counter-clockwise neighbor of $i$ at $\alpha_0$
            \item $i$ and $j$ are not homotopic as untagged edges
        \end{enumerate}
    \end{itemize}
    The final step in forming $Q^\TT$ is to delete any 2-cycles; these will be generated by arcs ending at a puncture.
    If $G$ is a finite or affine Dynkin diagram, then $\TT$ is \textbf{of type $G$} if $Q^\TT$ is of type $G$. 
    A triangulation $\TT$ is said to be \textbf{acyclic} if $Q^\TT$ is. 
    This paper only considers acyclic triangulations.
\end{definition}

\begin{remark}
    Note how $Q^\TT$ in the left-hand panel of Figure~\ref{fig:badtriangles} has arrows $5 \to 6$ and $6 \to 7$. 
    This is because the untagged edges 6 and 7 are homotopic to one another, so 5 is adjacent to both of them.
    Also, in both panels of Figure~\ref{fig:badtriangles}, since only the edges labeled 6 and 7 end at the same puncture, the arrows $6 \to 7$ and $7 \to 6$ form a 2-cycle and are deleted.
\end{remark}

\begin{figure}
    \centering
    \includegraphics[scale = 0.7]{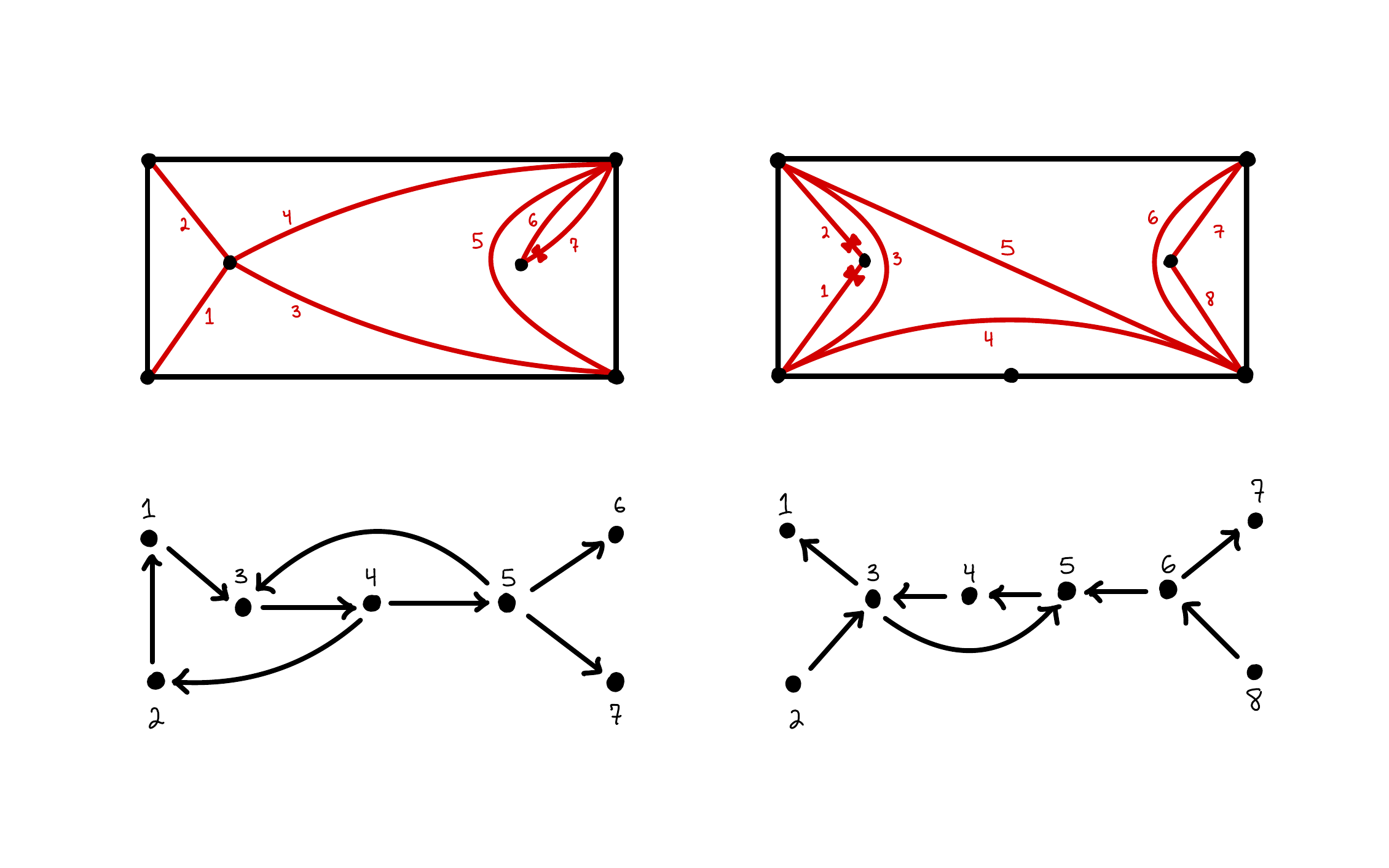}
    \caption{An example of a non-acyclic $\widetilde{D}_6$ triangulation (left) and a non-acyclic $\widetilde{D}_7$ triangulation (right).}
    \label{fig:badtriangles}
\end{figure}

The quiver associated with a triangulation $\TT$ is finite if $\TT$ is finite.
Moreover, mutation of the quiver at vertex $i$ can be realized geometrically by the so-called ``flip'' of the admissible tagged edge $i$ (see \cite{fomin_cluster_2008}). 
Figure~\ref{fig:badtriangles} gives two examples of non-acyclic triangulations of type $\widetilde{D}_n$.

\begin{definition}\label{def-rotation}
    For a curve $\gamma$ in $\surf$ with $\gamma(0) \in \MM$ (resp. $\gamma(1) \in \MM$), let $\gamma[1]$ (resp. $[1]\gamma$) be the curve obtained from $\gamma$ by moving $\gamma(0)$ (resp. $\gamma(1)$) along the boundary counterclockwise to the next marked point. 
\end{definition}

Figure~\ref{fig:endshift} gives an example of the $[1]$ operator acting on an admissible tagged edge.
The next definition is of great importance to the remainder of the paper.

\begin{definition}\label{def-rho}
    Let $(\gamma, \kappa)$ be an admissible tagged edge in $\surf$. 
    The \textbf{tagged rotation} of $(\gamma,\kappa)$ is $\rho(\gamma,\kappa) = (\rho(\gamma),\kappa')$ where 
    $$\rho(\gamma) = \begin{dcases} [1]\gamma[1] & \text{ if } \gamma(0), \gamma(1) \in \MM \\
    [1]\gamma & \text{ if } \gamma(0) \in \PP \text{ and } \gamma(1) \in \MM \\
    \gamma & \text{ if } \gamma(0), \gamma(1) \in \PP
    \end{dcases}$$ 
    and $\kappa(t)' = 1 - \kappa(t)$ for any endpoints in $\PP$.
\end{definition}

\begin{remark}
    Note that $\rho$ preserves adjacency between admissible tagged edges.
    Specifically, if $(\gamma_1,\kappa_1)$ and $(\gamma_2,\kappa_2)$ share any endpoints, so do $\rho(\gamma_1,\kappa_1)$ and $\rho(\gamma_2,\kappa_2)$.
    $\rho$ also respects self-intersections.
    As a consequence, if $\TT$ is an acyclic triangulation with quiver $Q^\TT$, then $\rho\TT$ is also an acyclic triangulation and $Q^{\rho\TT} = Q^\TT$.
    When we apply $n$ tagged rotations to a curve $(\gamma, \kappa)$, we write $\rho^n(\gamma, \kappa)$.
\end{remark}

\begin{figure}
    \centering
    \includegraphics[scale = 0.5]{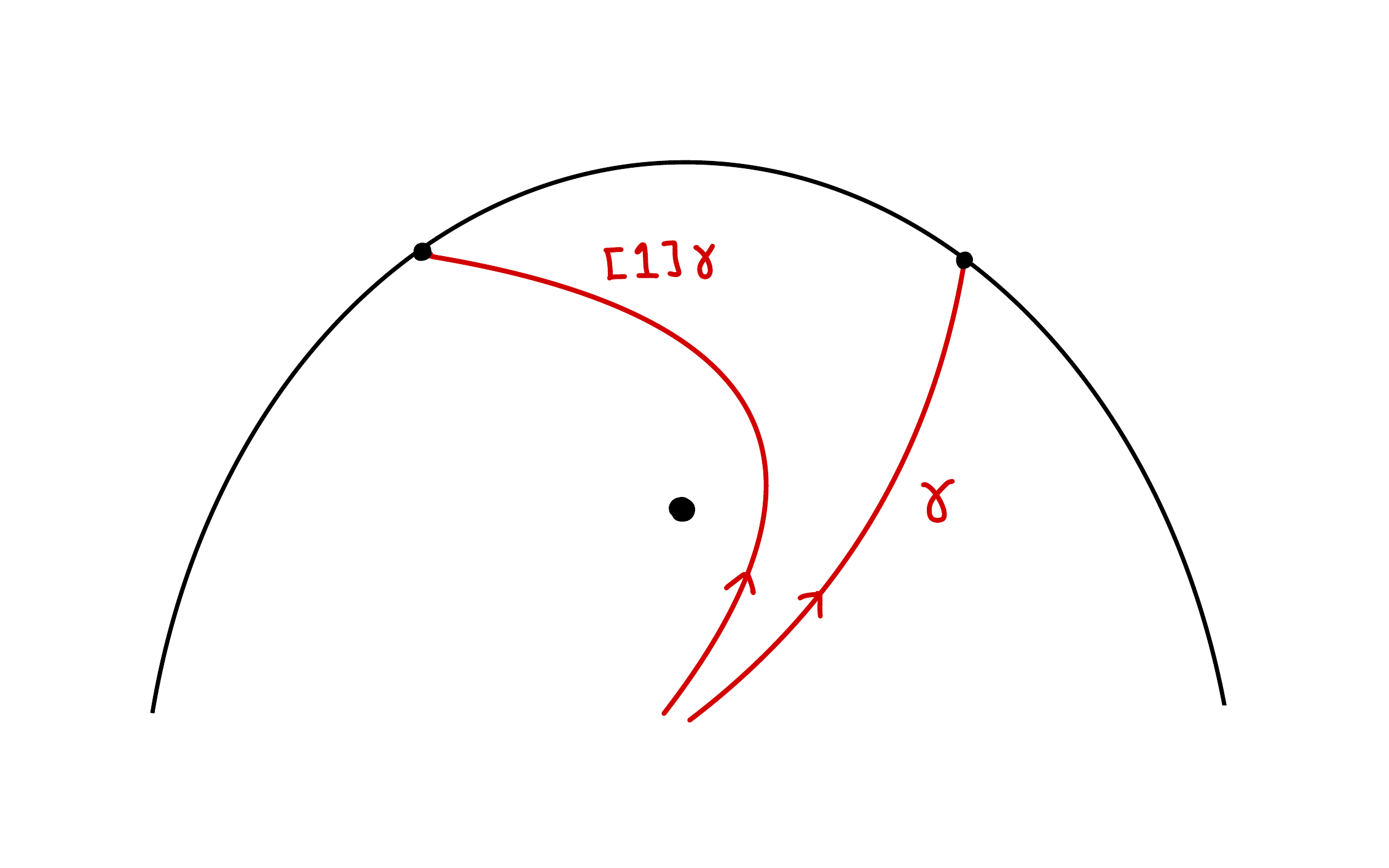}
    \caption{An example of a one-end shift.}
    \label{fig:endshift}
\end{figure}

The requirement that no admissible tagged edges cut out a once-punctured monogon is now an issue that must be considered in light of the $[1]$ operator.
In order to address this, He, Zhou, and Zhu \cite{he_geometric_2023} introduce the notion of the completion of a curve.

\begin{definition}\label{def-completion}
    If $\gamma(0) \in \PP$ and $\gamma(1) \in \MM$, then $\overline\gamma$, the \textbf{completion of $\gamma$}, is the curve which cuts out a once-punctured monogon and, together with $\gamma$, creates a self-folded triangle.
    Otherwise, $\overline\gamma = \gamma$.
    Note that there are two tagged edges ending in the puncture with the same completion; one for each map $\kappa$. 
    To avoid ambiguity, assume that all completions are oriented so that $\gamma(0) \in \PP$ lies to the left of $\overline\gamma$ (see Figure~\ref{fig:completion}).
\end{definition}

\begin{figure}
    \centering
    \includegraphics[scale = 0.5]{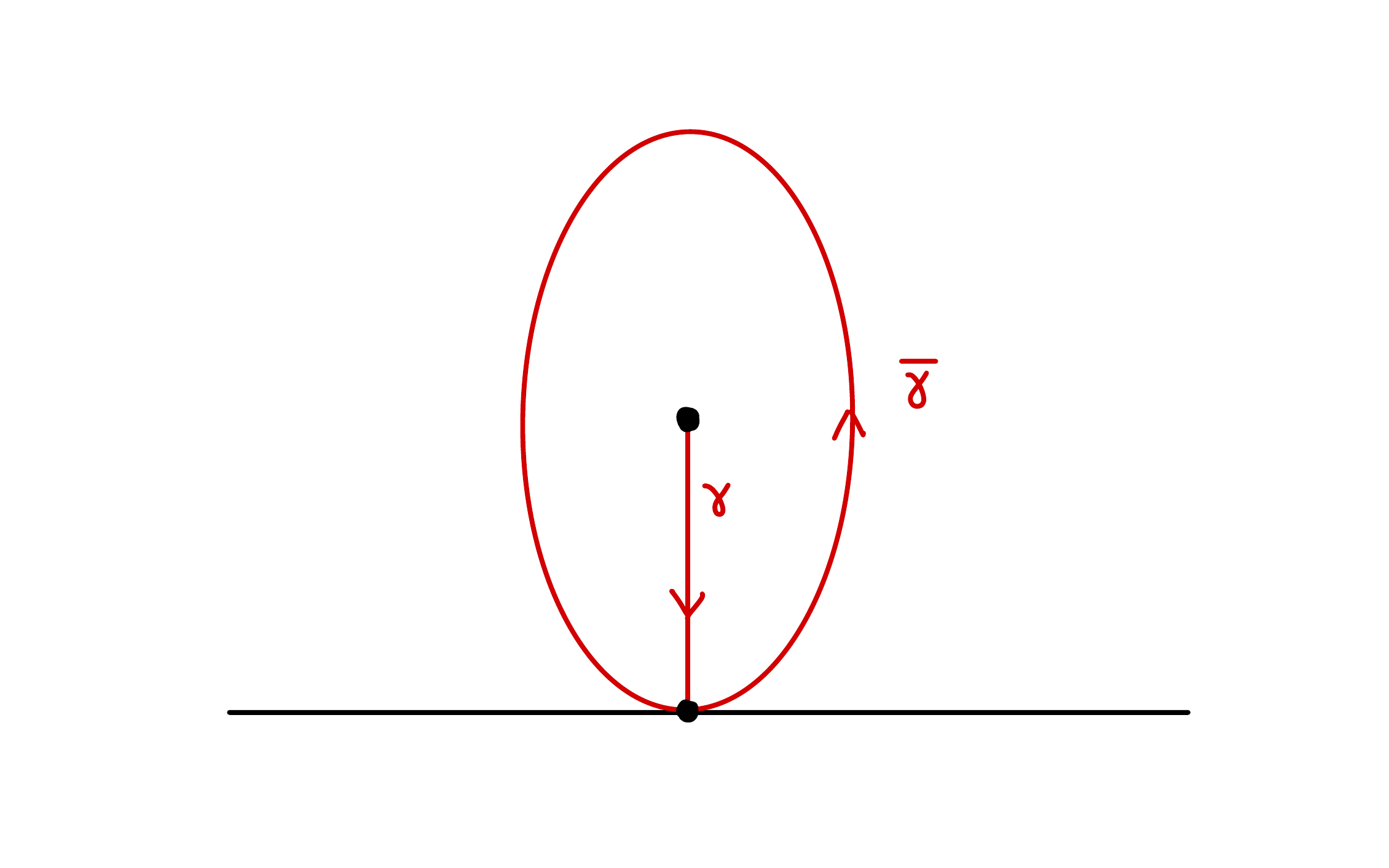}
    \caption{The completion of a tagged edge.}
    \label{fig:completion}
\end{figure}

%%%%%%%%%%%%%%%%%%%%%%%%%%%%%%%%%%%%%%%%%%%%%%%%%%%%%%%%%%%%%%%%%
\section{The Category of Preprojective Tagged Edges}
%%%%%%%%%%%%%%%%%%%%%%%%%%%%%%%%%%%%%%%%%%%%%%%%%%%%%%%%%%%%%%%%%

This is the first of the two categories that Theorem~\ref{main-thm} is concerned with.
In \cite{fomin_cluster_2008}, the authors give a precise count for the number of tagged edges in a punctured, marked surface with boundary $\surf$.
The following theorem is due to Proposition~2.10 of their paper.

\begin{theorem}[\cite{fomin_cluster_2008}] \label{thm-nplus3}
    All triangulations $\TT$ of a twice-punctured $n$-gon contain $n+3$ admissible tagged edges and have a quiver $Q^\TT$ of type $\widetilde{D}_{n+2}$.
\end{theorem}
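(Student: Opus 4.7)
The plan is to prove the two claims separately: first count the arcs in any triangulation by a Euler-characteristic argument, and then identify the mutation type by exhibiting a single model triangulation whose quiver is a standard acyclic orientation of $\widetilde{D}_{n+2}$, using the fact that flips connect all tagged triangulations.

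First I would count arcs. The twice-punctured $n$-gon is a disk with $V=n+2$ vertices ($n$ on the boundary plus $2$ punctures); any ideal triangulation $\TT$ realizes it as a CW-complex in which the $2$-cells are triangles. Writing $a$ for the number of arcs in $\TT$ and $T$ for the number of triangles, we have $E = n + a$ and $F = T$, so the disk's Euler characteristic gives
$$ (n+2) - (n+a) + T = 1, \qquad \text{i.e.}\qquad T = a - 1.$$
A triangle-side count (with the convention that, in a self-folded triangle, the folded arc contributes twice) yields $3T = 2a + n$. Combining the two equations produces $a = n+3$, which in particular matches the $n+3$ vertices of $\widetilde{D}_{n+2}$ under the paper's convention.

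Next I would identify the type. I would build one explicit acyclic triangulation $\TT_0$: label the boundary marked points $m_1,\dots,m_n$ cyclically and the punctures $p,q$; take the fan of arcs $m_1 m_i$ for $3 \le i \le n$, an arc $m_1 p$ and an arc $m_1 q$, an arc $m_2 p$ and an arc $m_2 q$, together with the arc from $m_2$ to $p$ carrying the opposite tag (so the two arcs into $p$ from $m_1,m_2$ together with the tagged arc separate $p$ from $q$). Reading off $Q^{\TT_0}$ from Definition~\ref{def-qt}, the fan produces a linear type-$A$ spine $m_1m_3 \to m_1m_4 \to \dots \to m_1m_n$, and the four arcs meeting each of the two punctures produce the two characteristic $\widetilde{D}$-forks at the ends of this spine (the $2$-cycles from pairs of arcs ending at the same puncture are deleted per Definition~\ref{def-qt}). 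After orienting the fan consistently, $Q^{\TT_0}$ is visibly an acyclic orientation of $\widetilde{D}_{n+2}$.

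Finally I would upgrade $\TT_0$ to all $\TT$. By Fomin--Shapiro--Thurston, any two tagged triangulations of $\surf$ are connected by a finite sequence of flips, and a flip at an admissible tagged edge corresponds exactly to quiver mutation at the associated vertex. Hence $Q^\TT$ is mutation-equivalent to $Q^{\TT_0}$, and by Definition~\ref{def-qt} this is precisely what it means for $\TT$ to be of type $\widetilde{D}_{n+2}$. The main obstacle I anticipate is bookkeeping around self-folded triangles and notched endpoints at the punctures: both the face-edge equation $3T = 2a+n$ and the explicit description of $Q^{\TT_0}$ require one to be careful that tagged arcs sharing an untagged homotopy class at a puncture are not mistakenly counted as adjacent, and that the resulting $2$-cycles are correctly deleted before comparing with the $\widetilde{D}_{n+2}$ diagram.
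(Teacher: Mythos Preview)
The paper does not give its own proof of this statement: it is quoted as Proposition~2.10 of \cite{fomin_cluster_2008} and used as a black box. There is therefore nothing in the paper to compare your argument against.

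On its own merits, your strategy is the standard one and is sound. The Euler-characteristic count is correct, and the plan of exhibiting one model triangulation together with flip-connectivity and the flip$\leftrightarrow$mutation correspondence is exactly how the type is pinned down. The weak point is your explicit $\TT_0$: as written the arc list does not total $n+3$ (the range $3\le i\le n$ for the fan includes the boundary segment $m_1m_n$, and even after fixing that the five puncture-arcs you name leave you one short), and it is not clear from the description that all listed arcs are pairwise compatible or that both forks appear. A cleaner model, in line with the structural Corollary following Proposition~\ref{prop-edgesinpunctures}, is to cut out two disjoint once-punctured digons with two arcs, triangulate each digon by its pair of oppositely tagged radii (four arcs), and then fan the remaining unpunctured $n$-gon from a single vertex ($n-3$ arcs); this gives $2+4+(n-3)=n+3$ arcs and makes the two forks and the $A$-type spine of $Q^{\TT_0}$ transparent. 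Your anticipated obstacle---the tag and self-fold bookkeeping---is precisely where the details need tightening.
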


In light of this result, this paper focuses on the case when $\surf$ is a twice-punctured $n$-gon ($n \geq 3$ so that $Q^\TT$ is at least of type $\widetilde{D}_5$).
However, smaller $n$ are sometimes considered when appropriate.
We will continue to assume that $\TT$ is an acyclic triangulation.

\begin{definition}
    Fix a triangulation $\TT$ of $\surf$.
    The \textbf{projective tagged edges} in $E^\times$ are defined to be 
    $$PTE(\TT) \coloneqq \{ (\gamma, \kappa) \in \surf \ | \ \rho^1(\gamma, \kappa) \in \TT \} \subset E^\times.$$
    The \textbf{preprojective tagged edges} in $E^\times$ are defined to be
    $$PPTE(\TT) \coloneqq \{ (\gamma, \kappa) \in \surf \ | \ \rho^n(\gamma, \kappa) \in \TT \text{ for some } n \geq 1 \} \subset E^\times.$$
\end{definition}

The following results describe $PPTE(\TT)$; specifically, they describe which classes of admissible tagged edges do not appear in $PPTE(\TT)$.

\begin{proposition} \label{prop-doublepuncturesbad}
    For a triangulation of a twice-punctured $n$-gon, $PPTE(\TT)$ contains no admissible tagged edges with both endpoints in the punctures.
    This implies, by definition, that $\TT$ contains no such admissible tagged edges.
\end{proposition}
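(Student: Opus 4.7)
The plan is to argue by contradiction: assume $(\gamma,\kappa)\in\TT$ has both endpoints in $\PP$, and show that the quiver $Q^\TT$ must contain a directed cycle, contradicting the standing hypothesis that $\TT$ is acyclic. First observe that the two assertions of the proposition are equivalent. By Definition~\ref{def-rho}, whenever $\gamma(0),\gamma(1)\in\PP$ the tagged rotation fixes the underlying curve and merely toggles the tagging at each puncture, so $\rho^{2}(\gamma,\kappa)=(\gamma,\kappa)$. Hence membership of such an arc in $\TT$ automatically places it in $PPTE(\TT)$ (take $n=2$), while conversely $\rho^{n}(\gamma,\kappa)\in\TT$ forces either $(\gamma,\kappa)$ or its tag-toggled version $(\gamma,1-\kappa)$ into $\TT$ according to the parity of $n$. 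Thus it suffices to prove $\TT$ itself contains no admissible tagged edge with both endpoints in $\PP$.

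For the main step, suppose $(\gamma,\kappa)\in\TT$ has both endpoints in $\PP$, and consider a triangular face $T$ of $\TT$ on one side of $\gamma$. In the generic situation $T$ is bounded by $\gamma$ together with two further arcs $\alpha,\beta\in\TT$ meeting at a third vertex $v$, with the three arcs pairwise distinct. Because the endpoints of $\gamma$ are interior to $\surf$, no side of $T$ is a boundary segment; every side is an arc of $\TT$. By Definition~\ref{def-qt}, the counter-clockwise adjacencies of $\gamma,\alpha,\beta$ inside $T$ at each of the three shared endpoints yield one arrow per pair, and those arrows are oriented consistently around $T$, producing a directed $3$-cycle of the form $\gamma\to\alpha\to\beta\to\gamma$ (up to relabelling). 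Since each of the three pairs consists of non-homotopic arcs (they are distinct members of a triangulation), no arrow is deleted by condition~(3) of Definition~\ref{def-qt}, so the $3$-cycle survives in $Q^\TT$, contradicting acyclicity.

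The remaining work is a case analysis to dispatch the degenerate configurations for $T$. The cases are: (i) $\gamma$ being a loop based at a single puncture, which must be ruled out by combining the once-punctured monogon exclusion of Definition~\ref{def-tagged}(3) with the twice-punctured topology of $\surf$; (ii) the third vertex $v$ coinciding with $\gamma(0)$ or $\gamma(1)$, so that one of $\alpha,\beta$ is itself an arc with both endpoints in $\PP$; and (iii) $T$ being self-folded in the tagged sense, which is incompatible with $\gamma$ already carrying a tagging at both ends. I expect case~(ii) to be the principal obstacle: if $\alpha$ is a second puncture-to-puncture arc, one passes to the smaller subregion bounded by $\gamma$ and $\alpha$ and repeats the analysis, eventually reaching a non-degenerate triangle whose three distinct sides deliver the desired $3$-cycle in $Q^\TT$. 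Once every degenerate case is disposed of, the contradiction is established and the proposition follows.
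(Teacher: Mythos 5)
Your approach is genuinely different from the paper's. The paper argues by induction on the number of marked points: the base case is the twice-punctured monogon, and the inductive step uses the pigeonhole principle to force three or more arcs of $\TT$ to end at a single puncture, which is what produces the directed cycle in $Q^{\TT}$. You instead try to read a directed $3$-cycle directly off a triangular face $T$ adjacent to $\gamma$. This would be a shorter argument if it worked, but there is a gap.

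The gap is in the sentence claiming the $3$-cycle ``survives in $Q^\TT$.'' You check that condition~(3) of Definition~\ref{def-qt} (non-homotopic arcs) does not delete any of the three arrows, but you overlook the final clause of that definition: ``The final step in forming $Q^\TT$ is to delete any $2$-cycles; these will be generated by arcs ending at a puncture.'' Exactly two of the three arrows in your putative cycle live at the punctures $\gamma(0)$ and $\gamma(1)$, and those are precisely the arrows the $2$-cycle rule is designed to cancel. If $\gamma$ and $\alpha$ are the only arcs of $\TT$ incident to the puncture $\gamma(0)$ --- which, by Proposition~\ref{prop-edgesinpunctures}, is the situation in every acyclic triangulation --- then going counter-clockwise around $\gamma(0)$ one alternates $\gamma,\alpha,\gamma,\alpha,\dots$, so each is the direct counter-clockwise neighbor of the other; both $\gamma\to\alpha$ and $\alpha\to\gamma$ are created and then deleted. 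The same happens at $\gamma(1)$, leaving only the single arrow at the boundary vertex $v$ and no cycle. To rescue your approach you would need an additional step before invoking the triangle, for instance: (a) show that if only two arcs meet each puncture and one of them is a puncture-to-puncture arc $\gamma$, then the two triangles flanking $\gamma$ are forced to have all three sides in common, so the surface closes up to a sphere rather than a punctured disk, contradicting that $\surf$ is a twice-punctured $n$-gon; or (b) prove directly that some puncture must be incident to three or more arcs, which is what the paper's induction and pigeonhole argument accomplishes. The deferred degenerate cases~(ii)--(iii) are a secondary concern; as written, the main non-degenerate step is the one that does not yet go through.
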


\begin{proof}
    By definition, an admissible tagged edge $(\gamma, \kappa) \in PPTE(\TT)$ must be a finite number of tagged rotations away from an element of $\TT$. 
    If $\gamma(0), \gamma(1) \in \PP$, then the tagged rotation $\rho$ will have no effect on the endpoints of $\gamma$ and only affect the map $\kappa$.

    The proof proceeds by contradiction. 
    Assume that $(\gamma, \kappa) \in PPTE(\TT)$ has both endpoints in $\PP$.
    Since we are only considering twice-punctured $n$-gons, it must be the case that $|\PP| = 2$ and call these punctures $P_0$ and $P_1$.
    Furthermore, since admissible tagged edges (or their self-intersections) are not allowed to cut out once-punctured monogons, we can assume without loss of generality that $\gamma(0) = P_0$ and $\gamma(1) = P_1$.
    We may also assume that $\gamma$ has no self-intersections, else $\rho^n(\gamma, \kappa) \in \TT$ would have self-intersections--a contradiction to $(\gamma, \kappa) \in PPTE(\TT)$.

    Since $\rho$ sends $\kappa(t) \to \kappa'(t) = 1-\kappa(t)$ for each endpoint in a puncture, we can deduce that $\rho^2(\gamma, \kappa) = (\gamma, \kappa)$. 
    Since $(\gamma, \kappa) \in PPTE(\TT)$, it must be the case that $\rho^1(\gamma, \kappa) = (\gamma, \kappa') \in \TT$.
    Now that we have a fixed element of the triangulation that results in $(\gamma, \kappa)$ being a preprojective tagged edge, we will show that this forces the triangulation of a twice-punctured $n$-gon to be non-acyclic--a contradiction.

    \begin{figure}
        \centering
        \includegraphics[scale = 0.5]{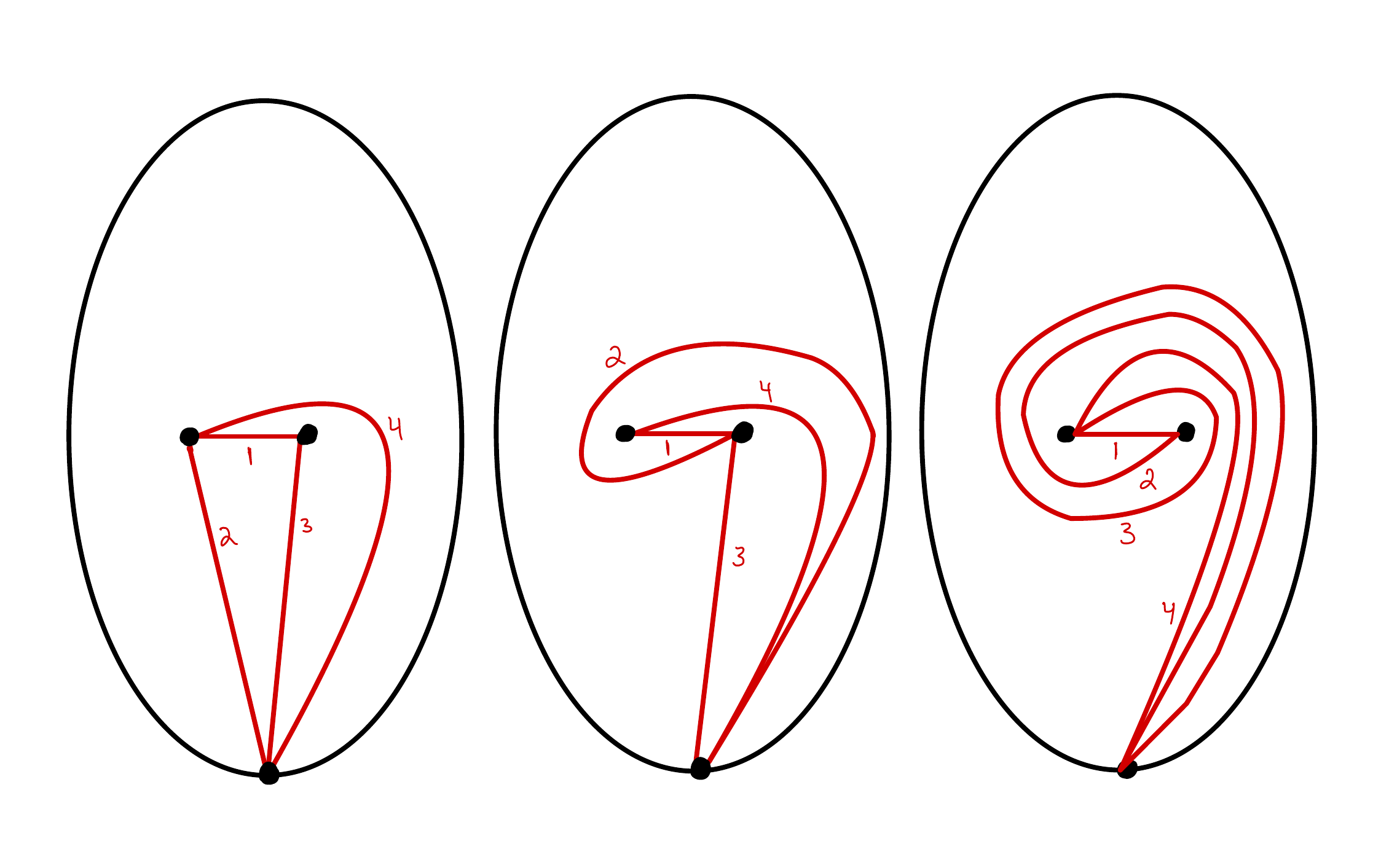}
        \caption{Triangulations of a twice-punctured monogon containing an edge with both ends in the punctures.}
        \label{fig:monogontri}
    \end{figure}

    In order to prove that all resulting triangulations will be non-acyclic, we proceed by induction.
    The base case is a twice-punctured monogon.
    It is straightforward to check that all triangulations of a twice-punctured monogon containing $(\gamma, \kappa')$ (there are three of them up to symmetry and choices for tagging as depicted in Figure~\ref{fig:monogontri}) are non-acyclic.
    The reason this happens is that, in all three cases, there will always be a puncture which is the endpoint of three or more admissible tagged edges. As can be seen in the left-hand panel of Figure~\ref{fig:badtriangles}, whenever there are three or more admissible tagged edges that end in a puncture, the resulting quiver will contain a cycle.

    Now assume $\surf$ is a twice-punctured $m$-gon containing $(\gamma, \kappa')$. 
    If there is an admissible tagged edge in the triangulation which cuts out a twice-punctured $l$-gon where $l < m$, the triangulation of the resulting $l$-gon will be non-acyclic by the inductive hypothesis and therefore the triangulation of the $m$-gon will also be non-acyclic.
    Therefore, no admissible tagged edges will have both endpoints in $\MM$, otherwise, that edge will cut out a twice-punctured $l$-gon where $l < m$.
    Hence, the remaining admissible tagged edges of the triangulation will have one endpoint in $\MM$ and the other in $\PP$. 
    There will be at least 3 admissible tagged edges ending in $\PP$: this is how many remaining admissible tagged edges there are in the twice-punctured monogon excluding $(\gamma, \kappa')$.
    Since each puncture already has $(\gamma, \kappa')$ ending in it, we can apply the pigeonhole principle to see that one of the punctures will have 3 or more admissible tagged edges ending in it.
    Therefore the resulting triangulation will be non-acyclic. 

    Since all triangulations containing $(\gamma, \kappa')$ will be non-acyclic (a contradiction), we can conclude that $PPTE(\TT)$ and $\TT$ contain no admissible tagged edges with both endpoints in the punctures.
\end{proof}

\begin{figure}
    \centering
    \includegraphics[scale = 0.5]{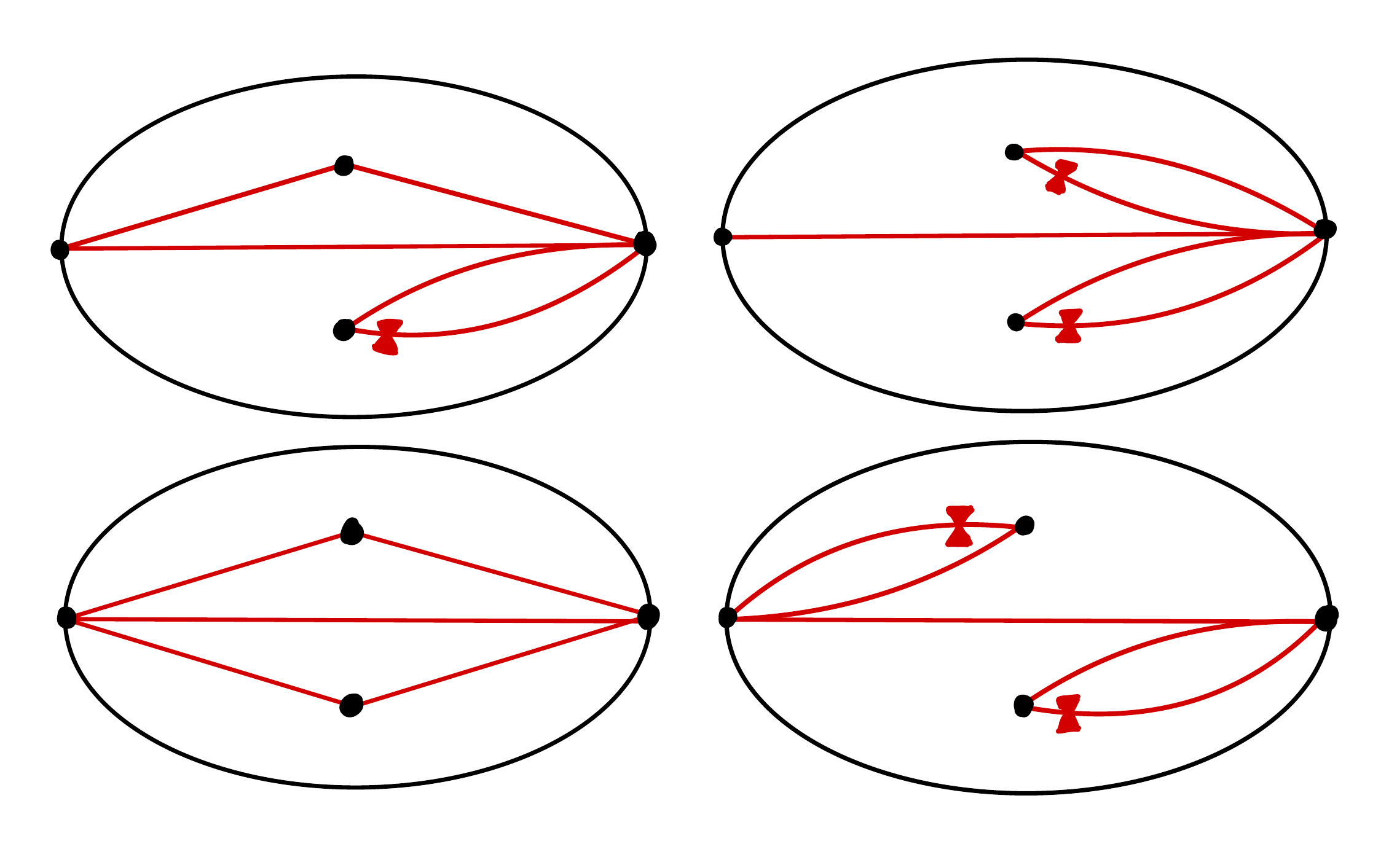}
    \caption{Acyclic triangulations of a twice-punctured digon.}
    \label{fig:digon}
\end{figure}

\begin{proposition} \label{prop-digonsbad}
    If $\surf$ is at least a twice-punctured triangle and $\TT$ is a triangulation of $\surf$, then $PPTE(\TT)$, and hence $\TT$, contains no admissible tagged edges which cut out a twice-punctured digon.
\end{proposition}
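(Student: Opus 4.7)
The plan is to follow the template of Proposition~\ref{prop-doublepuncturesbad}. First, note that the tagged rotation $\rho$ preserves the property ``cuts out a twice-punctured digon'': if $\gamma$ has endpoints at two consecutive marked points $m_i, m_{i+1}$ and, together with the boundary segment between them, bounds a disk containing both punctures, then $\rho(\gamma) = [1]\gamma[1]$ has endpoints at the next consecutive pair $m_{i+1}, m_{i+2}$ and still bounds (with the next boundary segment) a disk containing both punctures. Hence if $(\gamma,\kappa) \in PPTE(\TT)$ cut out a twice-punctured digon, some iterate $\rho^n(\gamma,\kappa) \in \TT$ would too, and it suffices to show that no acyclic triangulation $\TT$ of $\surf$ contains such an edge.

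Suppose for contradiction $\gamma \in \TT$ cuts out a twice-punctured digon $D$. The plan is to induct on the number $n$ of boundary marked points of $\surf$. For the inductive step $n > 3$, if some other edge $\delta \in \TT$ cuts out a twice-punctured $l$-gon with $l < n$, then the restriction of $\TT$ to that sub-surface is itself a triangulation containing $\gamma$, to which the inductive hypothesis applies, producing a cycle in $Q^\TT$. Otherwise every non-$\gamma$ edge of $\TT$ lies either in $D$ or in the non-digon side of $\gamma$ (an $n$-gon disk without punctures, whose contribution to $Q^\TT$ is acyclic), and the base case handles the cycle inside $D$.

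For the base case $n = 3$, $\surf$ is a twice-punctured triangle with marked points $m_0, m_1, m_2$ and (say) $\gamma$ has endpoints $m_0, m_1$. The non-digon side is the unobstructed triangle $(m_0, m_1, m_2)$, and $D$ carries exactly $5$ internal tagged edges of $\TT$ by Theorem~\ref{thm-nplus3}. Proposition~\ref{prop-doublepuncturesbad} forces each of those $5$ edges to have at least one endpoint in $\{m_0, m_1\}$, and each puncture must support at least two tagged edges of $\TT$. Since $\gamma$ is the highest-angle internal edge at $m_0$ (immediately before the outer boundary arc $a_2$ in CCW order) and the lowest-angle internal edge at $m_1$ (immediately after $a_1$), $\gamma$ receives an arrow $\delta_k \to \gamma$ from its CCW predecessor $\delta_k \in D$ at $m_0$ and sends an arrow $\gamma \to \delta_1$ to its CCW successor $\delta_1 \in D$ at $m_1$. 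I will exhibit a directed path $\delta_1 \to \delta_2 \to \cdots \to \delta_k$ inside the sub-quiver of $Q^\TT$ spanned by $D$'s internal edges, thereby closing the cycle $\gamma \to \delta_1 \to \cdots \to \delta_k \to \gamma$ in $Q^\TT$ and contradicting acyclicity.

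The main obstacle is producing this directed path from $\delta_1$ to $\delta_k$ inside $D$. This reduces to a case analysis on the possible tagged triangulations of a twice-punctured digon: whether an arc from $m_0$ to $m_1$ separates the two punctures, or whether some puncture is reached only via two tagged arcs on a single underlying curve. The latter case is delicate because Definition~\ref{def-qt}(3) suppresses arrows between tagged arcs that are homotopic as untagged edges, so one must check that the walk passes through such a pair via their common CCW successor rather than dead-ending; one also needs to confirm the $2$-cycle deletion around a puncture with exactly two distinct tagged arcs does not sever the walk (in that event the walk must instead continue at a marked point). Once these combinatorial sub-cases are verified, the walk always closes back to $\delta_k$ at $m_0$ and the proposition follows.
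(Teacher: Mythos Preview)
Your reduction from $PPTE(\TT)$ to $\TT$ via the $\rho$-invariance of ``cuts out a twice-punctured digon'' is correct and matches the paper's first step. The core idea thereafter is also the same as the paper's: the cycle in $Q^\TT$ lives entirely in the sub-quiver spanned by $\gamma$ together with the edges triangulating the digon $D$. However, you package this less efficiently. The induction on $n$ is superfluous: every edge of $\TT$ already lies on one side of $\gamma$, so the analysis of $D$ is the whole argument regardless of $n$, and your ``inductive step'' never actually fires. Your base case then proposes to build the cycle abstractly by exhibiting a directed walk $\gamma \to \delta_1 \to \cdots \to \delta_k \to \gamma$ through the digon, but you defer the verification to an unfinished case analysis that you yourself flag as delicate (tagged pairs with the same underlying arc, the $2$-cycle deletion at punctures, etc.).

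The paper short-circuits all of this: it simply enumerates the four acyclic tagged triangulations of a twice-punctured digon (Figure~\ref{fig:digon}), each giving a $\widetilde{D}_4$ quiver, and observes that promoting one boundary segment of the digon to an internal edge $\gamma$ yields in every case a non-acyclic $\widetilde{D}_5$ quiver. Since the digon's induced triangulation must be one of these four (otherwise $Q^\TT$ already contains a cycle), this finite check finishes the proof. Your directed-path strategy would work once completed, but the paper's explicit enumeration is both shorter and avoids the tagging subtleties you identified.
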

    
\begin{proof}
    Start with a twice-punctured digon and triangulate it so that the triangulation is acyclic.
    There are four of these up to symmetry and choice of tagging as depicted in Figure~\ref{fig:digon}.
    The resulting quiver will contain 5 vertices and be of type $\widetilde{D}_4$.
    Now label one of the two boundary components as an admissible tagged edge of a triangulation instead of a boundary component.
    The resulting quiver is now of type $\widetilde{D}_5$ and is non-acyclic. 
    On the other hand, if $\surf$ is at least a twice-punctured triangle and $PPTE(\TT)$ contains an admissible tagged edge that cuts out a twice-punctured digon, it must be the case that $\TT$ itself contains an admissible tagged edge which cuts out a twice-punctured digon.
    This is due to the fact that $\rho$ preserves adjacency.
    By the above argument, this triangulation is non-acyclic. 
\end{proof}

\begin{proposition} \label{prop-edgesinpunctures}
     Fix a triangulation $\TT$ of a twice-punctured $n$-gon. 
     Then each $P \in \PP$ must be the endpoint of exactly two admissible tagged edges $\gamma, \lambda \in \TT$ with the following restriction: 
     
     $\gamma(1)$ and $\lambda(1)$ must not be separated by any marked points in $\MM$.
\end{proposition}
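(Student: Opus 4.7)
The plan is to prove the proposition in four stages, each relying on what has already been established. First, maximality of $\TT$ forces $P$ to be the endpoint of at least one edge of $\TT$: otherwise $P$ lies in the interior of some triangle $T$ of $\TT$, which by Proposition~\ref{prop-doublepuncturesbad} has at least one vertex in $\MM$, and an admissible arc from $P$ to that vertex can be added without crossings, contradicting maximality. Second, acyclicity bounds the number of edges at $P$ above by two: three or more edges at the interior point $P$ produce a genuinely cyclic counterclockwise order of edges around $P$ and hence a directed cycle in $Q^\TT$---exactly the observation used in the proof of Proposition~\ref{prop-doublepuncturesbad}. Third, a single edge $\gamma$ at $P$ is ruled out because the unique triangle at $P$ would then have $\gamma$ on both of its sides at $P$, forcing its third side to be a loop at $\gamma(1)$ cutting out a once-punctured monogon; since $n \geq 3$ no boundary segment of $\surf$ is such a loop, so this side would have to lie in $\TT$, contradicting condition~(3) of Definition~\ref{def-tagged}.

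The main step is the non-separation condition. Write $\gamma, \lambda$ for the two edges at $P$; both triangles of $\TT$ meeting $P$ have $\gamma$ and $\lambda$ as their sides at $P$, so their third sides $e_1, e_2$ each run between $\gamma(1)$ and $\lambda(1)$. If $\gamma(1) = \lambda(1)$ the claim is vacuous, so assume these endpoints differ and suppose for contradiction that both $e_1, e_2$ lie in $\TT$. Then $e_1 \cup e_2$ bounds a digon of $\surf$ containing $P$, $\gamma$, and $\lambda$ in its interior. At $\gamma(1)$ the interior of $\surf$ lies on one side of the surface boundary and $\gamma$ sits angularly between $e_1$ and $e_2$; reading the incident edges at $\gamma(1)$ in counterclockwise order gives (say) $e_2, \gamma, e_1$, producing arrows $e_2 \to \gamma \to e_1$ in $Q^\TT$. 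At $\lambda(1)$ the interior of $\surf$ lies on the \emph{opposite} side of the surface boundary, so the counterclockwise order is reversed to $e_1, \lambda, e_2$, giving $e_1 \to \lambda \to e_2$. Combining these yields a directed $4$-cycle $e_2 \to \gamma \to e_1 \to \lambda \to e_2$ in $Q^\TT$, contradicting acyclicity. Hence at least one of $e_1, e_2$ must be a boundary segment, forcing $\gamma(1)$ and $\lambda(1)$ to be consecutive marked points along that segment.

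The main obstacle I expect is this orientation bookkeeping: verifying that the local counterclockwise orderings at $\gamma(1)$ and $\lambda(1)$ are genuinely reversed rather than identical. If they agreed, the four arrows would form two disjoint directed paths and no cycle would arise; the reversal is a consequence of the surface orientation together with the fact that the interior of $\surf$ lies on opposite sides of the boundary at the two endpoints, so that ``counterclockwise into the interior'' sweeps through $e_1, \gamma, e_2$ at the first endpoint and through $e_1, \lambda, e_2$ in the opposite order at the second. Once this orientation claim is made precise, the remaining pieces follow directly from Proposition~\ref{prop-doublepuncturesbad} and the definition of $Q^\TT$.
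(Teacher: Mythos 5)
Your proposal follows the same overall strategy as the paper's proof (maximality and acyclicity force exactly two arcs at $P$, then derive a contradiction from a separating marked point), but it supplies an explicit cycle where the paper does not. The paper's main step asserts that a triangulation of the once-punctured region ``will either have an edge ending in $P$ or an edge connecting $\gamma(1)$ and $\lambda(1)$ which cuts $\surf$ into two once-punctured polygons. Both of these result in non-acyclic triangulations,'' without showing where the cycle actually lives in the second case. Your identification of the directed $4$-cycle $e_2 \to \gamma \to e_1 \to \lambda \to e_2$ inside the once-punctured digon bounded by $e_1, e_2$ is the missing verification, and your orientation bookkeeping (the counterclockwise orders at $\gamma(1)$ and $\lambda(1)$ reverse because the digon lies on opposite angular sides at its two vertices) is correct. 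So on the main point your argument is both correct and sharper than what is in the paper.

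One part I would rework is your third stage. You rule out a single edge at $P$ by arguing that the unique triangle at $P$ would need a loop as its third side, and loops are inadmissible by Definition~\ref{def-tagged}(3). This is the picture for \emph{ideal} triangulations; in the tagged framework of this paper, the self-folded-triangle configuration is precisely what gets replaced by the pair of taggings of the same arc, so there is no ``missing loop'' to contradict. The cleaner argument, and the one the paper gestures at, is a direct maximality check: if only $(\gamma,\kappa)$ ends at $P$, then $(\gamma,1-\kappa)$ is admissible, is non-crossing with $(\gamma,\kappa)$ by Definition~\ref{def-puctintersection}(3) (their other endpoint lies in $\MM$, not $\PP$), and is non-crossing with the rest of $\TT$ since it is homotopic to $\gamma$ as an untagged arc; hence $\TT$ was not maximal. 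A related caveat runs through stages~1, 3, and 4: you repeatedly use that the complement of $\TT$ is a union of triangles with sides in $\TT \cup \partial S$, which is true but never established in the paper's definitions, so it is worth flagging as a standing assumption rather than quietly relying on it.
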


\begin{remark}
    The condition in the statement of Proposition~\ref{prop-edgesinpunctures} means that either $\gamma$ and $\lambda$ share the same endpoint in $\MM$ or that $\gamma$ and $\lambda$ are exactly a one-end shift operation $[1]$ away from one another.
\end{remark}

\begin{proof}
    Recall that the convention in this paper is to have any admissible tagged edge with one end in $\PP$ and the other in $\MM$ to terminate in $\MM$.
    First of all, note that any acyclic triangulation has at least two admissible tagged edges ending in $P$, or the triangulation will not be a maximal set of non-crossing edges. 
    Also, by the proof of Proposition~\ref{prop-doublepuncturesbad}, any more than two admissible tagged edges ending in a single puncture will result in a non-acyclic triangulation.
    Assume that the polygon satisfies $n > 3$; if not, the proof is trivial.
    The proof proceeds by contradiction.

    Assume there is at least one marked point $m_0$ between $\gamma(1)$ and $\lambda(1)$ and that $\gamma(0) = \lambda(0) = P$ such that $m_0, \gamma(1), \lambda(1),$ and $P$ are boundary points of a polygon which is free of punctures.
    Then there is at least one other marked point $m_1$ which is separated from $m_0$ such that $m_1, \gamma(1), \lambda(1),$ and $P$ are boundary points of a once-punctured polygon (as in Figure~\ref{fig:fig-edgesinpunctures}). 
    
    In the polygon free of punctures, we must choose admissible tagged edges to progress the triangulation of the twice-punctured $n$-gon. 
    If one of these ends in the puncture $P$, this will result in 3 admissible tagged edges ending in $P$ and giving a non-acyclic triangulation--contradiction. 
    Therefore the choices for admissible tagged edges are restricted and there must be exactly one edge with endpoints $\gamma(1)$ and $\lambda(1)$.

    In the once-punctured polygon, there are more choices to consider; however, all triangulations of the once-punctured polygon will result in a non-acyclic triangulation for the original twice-punctured surface $\surf$.
    The resulting triangulation will either have an edge ending in $P$ or an edge connecting $\gamma(1)$ and $\lambda(1)$ which cuts $\surf$ into two once-punctured polygons.
    Both of these result in non-acyclic triangulations.
\end{proof}

\begin{figure}
    \centering
    \includegraphics[scale = 0.5]{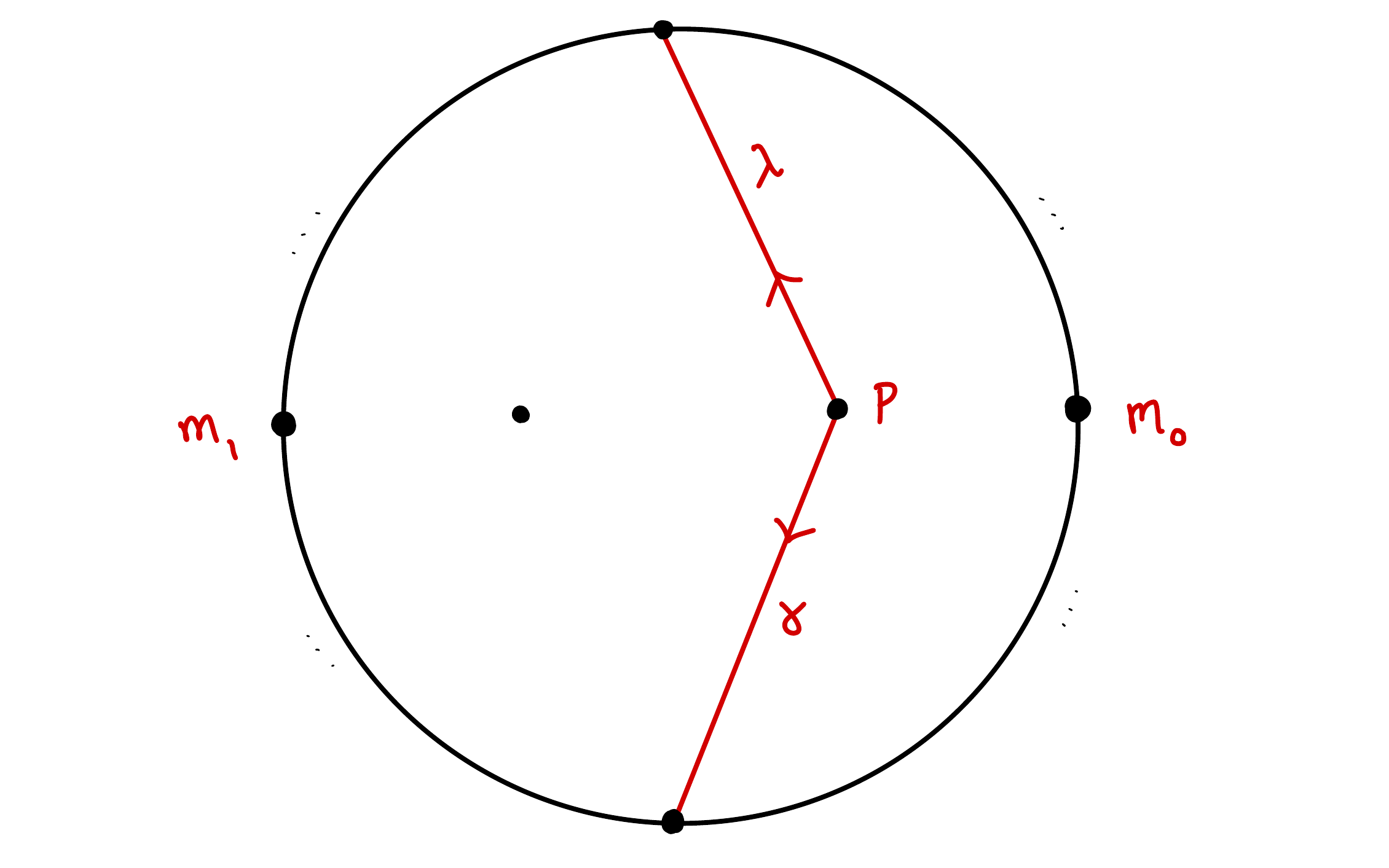}
    \caption{The set up for the proof in Proposition~\ref{prop-edgesinpunctures}.}
    \label{fig:fig-edgesinpunctures}
\end{figure}

\begin{corollary}
    If $\TT$ is a triangulation of a twice-punctured $n$-gon ($n \geq 3$) $\surf$, then $\TT$ contains two admissible tagged edges which, together with $\partial S$, cut $\surf$ into two once-punctured digons and an unpunctured polygon.
\end{corollary}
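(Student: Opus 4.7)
The plan is to invoke Proposition~\ref{prop-edgesinpunctures} for the local structure at each puncture, rule out the self-folded configuration using connectedness of $Q^\TT$ (type $\widetilde{D}_{n+2}$), and then read off the two desired edges from the triangles opposite to each puncture.

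By Proposition~\ref{prop-edgesinpunctures}, at each puncture $P_i$ ($i=0,1$) exactly two tagged edges $\gamma_i, \lambda_i \in \TT$ end at $P_i$, with other endpoints in $\MM$ either equal or consecutive on $\partial S$. The key step is to exclude the equal case: assume $\gamma_i(1) = \lambda_i(1) = m$. Since $\gamma_i$ and $\lambda_i$ are homotopic as untagged edges, condition~(3) of Definition~\ref{def-qt} together with the final $2$-cycle deletion kill every arrow between them in $Q^\TT$. For $\gamma_i$ and $\lambda_i$ to form the leaves of the corresponding fork in the underlying graph of $\widetilde{D}_{n+2}$, the tagged edges immediately preceding $\gamma_i$ and immediately following $\lambda_i$ in the cyclic order at $m$ must represent a single vertex of $Q^\TT$; this forces a tagged loop $e \in \TT$ based at $m$ whose two ends flank the pair $(\gamma_i, \lambda_i)$. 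Admissibility (Definition~\ref{def-tagged}(3)) forbids $e$ from cutting out a once-punctured monogon, and a loop at $m$ enclosing an unpunctured region would leave a monogonal piece admitting no further tagged subdivision, so $e$ must enclose both punctures. Then $P_{1-i}$ is also inside $e$, its two incident tagged edges cannot cross $e$, and so they too must end at $m$, making $P_{1-i}$ self-folded at $m$ as well.

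In this configuration, reading off direct counter-clockwise tagged neighbors in the cyclic order around $m$ among the five tagged edges $e, \gamma_0, \lambda_0, \gamma_1, \lambda_1$ (the two homotopic pairs being adjacent) yields only the arrows $e \to \gamma_0$, $\lambda_0 \to \gamma_1$, and $\lambda_1 \to e$, together with the deleted $2$-cycles at $P_0$ and $P_1$. Any further tagged edges of $\TT$ lie outside $e$ and can meet only $e$ and the remaining boundary marked points, so the underlying graph of $Q^\TT$ splits into one component containing $\{\gamma_0, e, \lambda_1\}$ and another containing $\{\lambda_0, \gamma_1\}$, contradicting connectedness of $\widetilde{D}_{n+2}$. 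Hence at each puncture $P_i$ the endpoints $m_i^1 = \gamma_i(1)$ and $m_i^2 = \lambda_i(1)$ are distinct consecutive marked points of $\partial S$.

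The edges $\gamma_i, \lambda_i$, together with the short boundary arc from $m_i^1$ to $m_i^2$, bound a triangle of $\TT$; since $P_i$ carries no other tagged edge, the triangle of $\TT$ on the other side of $\gamma_i$ must also be incident to $\lambda_i$, giving a triangle with vertices $(P_i, m_i^1, m_i^2)$ and third side a tagged edge $e_i \in \TT$ from $m_i^1$ to $m_i^2$ that is not homotopic to the boundary arc (it encloses $P_i$). Together with that short boundary arc, $e_i$ bounds a once-punctured digon containing $P_i, \gamma_i$, and $\lambda_i$; the two non-crossing edges $e_0, e_1 \in \TT$ then cut $\surf$, together with $\partial S$, into the two once-punctured digons bounded by $e_i$ and the short arc $m_i^1 m_i^2$, and an unpunctured polygon whose $n$ sides are $e_0, e_1$ and the $n-2$ remaining boundary arcs. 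The main obstacle is the self-folded exclusion, since it requires combining the cyclic-order combinatorics at $m$ with admissibility of the forced loop $e$ and the shape of the affine Dynkin diagram to detect the resulting disconnection of $Q^\TT$.
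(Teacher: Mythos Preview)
Your approach diverges from the paper's in a substantial way: the paper never excludes the ``self-folded'' configuration $\gamma_i(1)=\lambda_i(1)=m$ at all. It keeps both alternatives from Proposition~\ref{prop-edgesinpunctures} (same marked endpoint or adjacent marked endpoints), invokes maximality of $\TT$ to produce an edge enclosing each pair, and then uses Proposition~\ref{prop-digonsbad} to rule out that edge cutting a twice-punctured digon. You instead try to eliminate the self-folded case outright and then build $e_i$ as the third side of an honest triangle.

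The elimination argument does not go through. Your claim that the immediate clockwise and counter-clockwise neighbours of the homotopic pair at $m$ must be the two ends of a single loop $e\in\TT$ tacitly assumes there are tagged edges on \emph{both} sides of the pair in the cyclic order at $m$. If the pair sits next to a boundary segment on one side, there is only one tagged neighbour $e$, the fork condition is satisfied with $e$ an ordinary arc, and no loop is forced---so the disconnection you derive never materialises. Concretely, on the twice-punctured triangle with vertices $a,b,c$, take $\gamma_0,\lambda_0$ the plain and notched arcs from $P_0$ to $a$, $e_0$ the arc $a\!-\!b$ going around $P_0$, $\gamma_1,\lambda_1$ the plain arcs $P_1\!-\!b$ and $P_1\!-\!c$, and $e_1$ the arc $b\!-\!c$ around $P_1$. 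One checks directly that $Q^\TT$ is the acyclic $\widetilde{D}_5$ quiver with arrows $\gamma_0\to e_0$, $\lambda_0\to e_0$, $e_1\to e_0$, $\gamma_1\to e_1$, $e_1\to \lambda_1$; the self-folded configuration at $P_0$ is present and the triangulation is acyclic. (Your subsequent cyclic-order computation in the ``both punctures self-folded at $m$'' scenario also treats $\gamma_i$ and $\lambda_i$ as occupying distinct slots, which they do not, so the listed arrow set $e\to\gamma_0,\ \lambda_0\to\gamma_1,\ \lambda_1\to e$ is not what Definition~\ref{def-qt} produces.)

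Note that in the very example above the corollary still holds: $e_0$ and $e_1$ are the two desired edges. The fix is not to exclude the self-folded case but to argue, as the paper does, via maximality together with Proposition~\ref{prop-digonsbad}; alternatively, in the self-folded boundary-adjacent situation one can observe that the unique tagged neighbour $e$ of the pair at $m$ is forced (by looking at the ideal triangle outside the corresponding loop) to be an arc from $m$ to an adjacent marked point enclosing $P_i$, which is exactly the digon-cutting edge you need.
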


\begin{proof}
    This is a direct consequence of Proposition~\ref{prop-digonsbad} and Proposition~\ref{prop-edgesinpunctures}.
    Since the two pairs of admissible tagged edges of the triangulation that end in the two punctures must be neighbors or share the same endpoint in $\MM$, triangulations are a maximal set of non-crossing edges, and there can be no edges that cut out a twice-punctured digon, there must be two edges (one for each puncture) which cut out once-punctured digons containing the pairs.
\end{proof}

%%%%%%%%%%%%%%%%%%%%%%%%%%%%%%%%%%%%%%
\subsection{Elementary Moves}
%%%%%%%%%%%%%%%%%%%%%%%%%%%%%%%%%%%%%%

An elementary move sends a tagged edge $( \gamma, \kappa )$ to another tagged edge. 
Since we are only working with preprojective tagged edges, we only need to consider three classes of elementary moves. 
In other words, due to Propositions~\ref{prop-doublepuncturesbad}~and~\ref{prop-digonsbad}, there are no preprojective tagged edges with both endpoints in the punctures and no preprojective tagged edges that cut out a twice-punctured digon.

\begin{definition}
    Let $\surf$ be a twice-punctured $n$-gon, $\TT$ be a triangulation of $\surf$, and $(\gamma,\kappa) \in PPTE(\TT) \setminus PTE(\TT)$. 
    An \textbf{elementary move} is a map $PPTE(\TT) \setminus PTE(\TT) \to PPTE(\TT)$ which falls into one of three classes:
    \begin{enumerate}
        \item If $\gamma$ has both ends in $\MM$ and $\gamma$ and the boundary components of $\surf$ \textbf{do not} form a once-punctured digon, then there are exactly two elementary moves $( \gamma, \emptyset ) \mapsto ( \gamma[1], \emptyset )$ and $( \gamma, \emptyset ) \mapsto ( [1]\gamma, \emptyset )$
        \item If $\gamma$ has both ends in $\MM$ and $\gamma$ and a boundary component of $\surf$ \textbf{do} form a once-punctured digon, then there are exactly three elementary moves. Without loss of generality, assume that $\gamma$ is oriented so that the puncture inside the digon lies to the left of $\gamma$ (as in Figure~\ref{fig:elementary}). In this case, $[1]\gamma$ is homotopic to the completion $\overline{\lambda}$ of two tagged edges $(\lambda,\kappa)$ and $(\lambda,\kappa')$. The elementary moves are therefore $( \gamma, \emptyset ) \mapsto ( \gamma[1], \emptyset )$, $( \gamma, \emptyset ) \mapsto ( \lambda, \kappa)$, and $( \gamma, \emptyset ) \mapsto ( \lambda, \kappa').$
        \item If $\gamma$ satisfies $\gamma(0) \in \PP$ and $\gamma(1) \in \MM$, then there is exactly one elementary move. Let $\overline{\gamma}$ be the completion of $\gamma$. The elementary move is $( \gamma, \kappa ) \mapsto ( \overline{\gamma}[1], \emptyset ).$ 
    \end{enumerate}
\end{definition}

Note that the elementary moves described above can be thought of as possible ``middle steps'' between a curve $(\gamma,\kappa)$ and its tagged rotation $\rho(\gamma,\kappa)$.
The purpose of this will become clear in the next subsection.

\begin{figure}
    \centering
    \includegraphics[scale = 0.7]{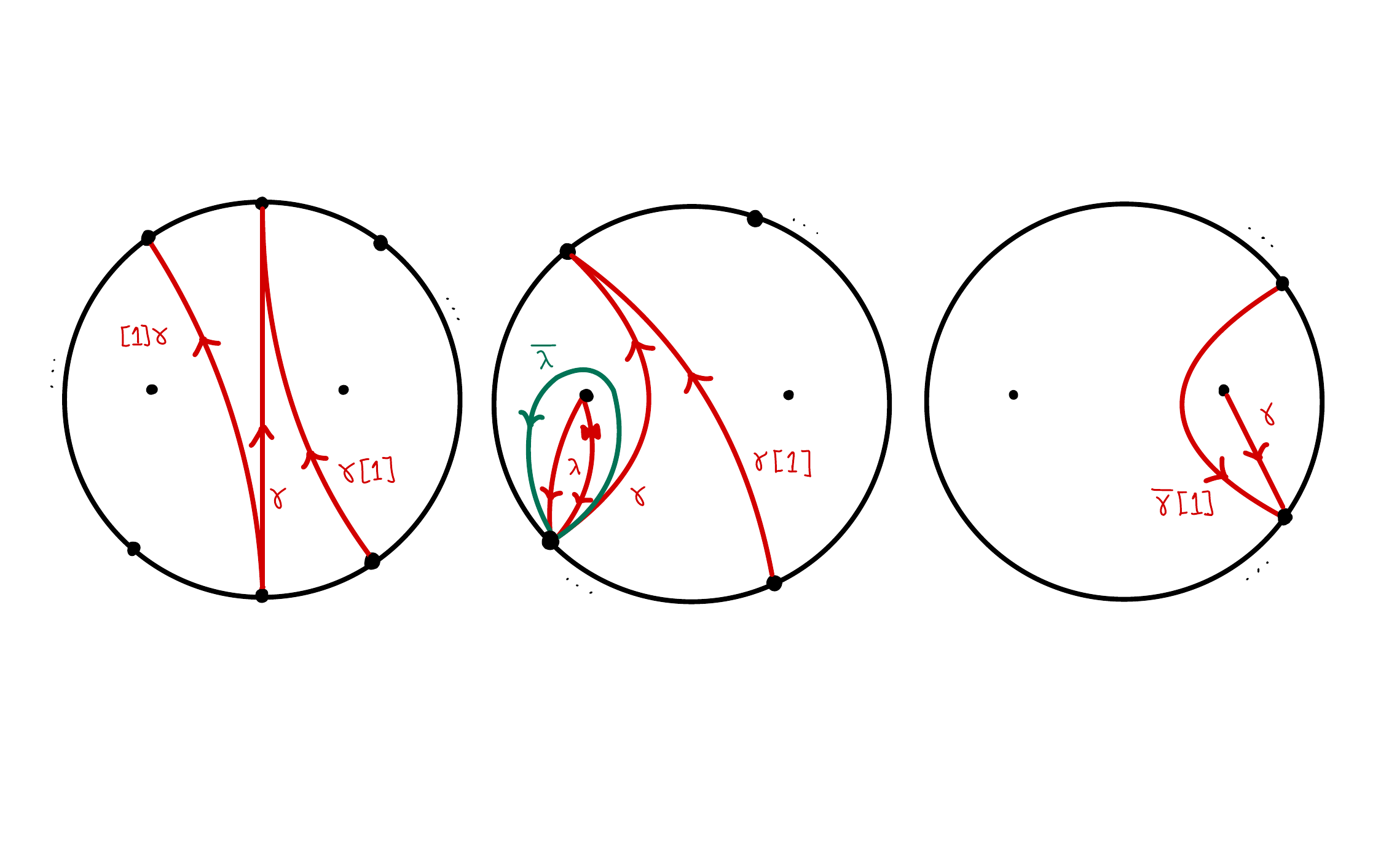}
    \caption{The three classes of elementary moves.}
    \label{fig:elementary}
\end{figure}

%%%%%%%%%%%%%%%%%%%%%%%%%%%%%%%%%%%%%%
\subsection{The Category of Preprojective Tagged Edges}
%%%%%%%%%%%%%%%%%%%%%%%%%%%%%%%%%%%%%%
Now that $PPTE(\TT)$ has been described and we have the notion of an elementary move, we can give the definition of the first category which plays a major role in Theorem~\ref{main-thm}.

\begin{definition}
    The \textbf{category of preprojective tagged edges} $\mathcal{P(\TT)}$ is the category with objects being homotopy classes of preprojective tagged edges. The space of morphisms from $(\gamma_1, \kappa_1)$ to $(\gamma_2, \kappa_2)$ is a quotient of the vector space spanned by elementary moves $(\gamma_2, \kappa_2) \mapsto (\gamma_1, \kappa_1)$.

    A \textbf{mesh relation} is an equality between certain sequences of elementary moves.
    To be precise, for $(\gamma, \kappa) \in PPTE(\TT)$, the mesh relation is $$m_{(\gamma,\kappa)} = \sum_\alpha (\alpha)\alpha$$
    where the sum is over all elementary moves $\alpha: (\lambda_i, \kappa_i) \mapsto (\gamma, \kappa)$ and $(\alpha)$ is the elementary move $(\alpha): \rho(\gamma, \kappa) \mapsto (\lambda_i, \kappa_i)$ which is guaranteed to exist for $(\gamma, \kappa) \notin PTE(\TT)$.
    
    Therefore, the set of morphisms from $(\gamma_1, \kappa_1)$ to $(\gamma_2, \kappa_2)$ in $\mathcal{P(\TT)}$ is the quotient of the vector space spanned by elementary moves $(\gamma_2, \kappa_2) \mapsto (\gamma_1, \kappa_1)$ by the subspace generated by the mesh relations.
\end{definition}

\begin{remark}
    It is crucial to note that the arrows in $\mathcal{P(\TT)}$ go in the \textit{opposite} direction of the elementary moves.
    Also, note that the category of preprojective tagged edges has a definite ``beginning'' starting with the projective tagged edges.
\end{remark}

\begin{proposition} \label{prop-base}
    Let $(\gamma_i, \kappa_i), (\gamma_j, \kappa_j) \in PTE(\TT)$ be two projective tagged edges corresponding to triangulated edges $i$ and $j$ respectively. 
    Then there is an arrow $(\gamma_i, \kappa_i) \to (\gamma_j, \kappa_j)$ in $\mathcal{P(\TT)}$ if and only if there is an arrow $j \to i$ in $Q^\TT$.
\end{proposition}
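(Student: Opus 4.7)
The plan is to unpack both sides in terms of elementary moves and cyclic adjacencies in the triangulation, then match them explicitly.

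By the definition of $\mathcal{P}(\TT)$ (and the remark that its arrows go opposite to elementary moves), the existence of an arrow $(\gamma_i,\kappa_i)\to(\gamma_j,\kappa_j)$ is equivalent to the existence of a non-trivial elementary move $(\gamma_j,\kappa_j)\mapsto(\gamma_i,\kappa_i)$ with target in $PTE(\TT)$; while an arrow $j\to i$ in $Q^\TT$ means, by Definition~\ref{def-qt}, that $i$ is the direct CCW neighbor of $j$ at some shared endpoint $\alpha\in\MM\cup\PP$ with $i,j$ non-homotopic as untagged edges. The central identity, verified directly from Definition~\ref{def-rho}, is $\rho(\gamma_j[1])=j[1]$ and $\rho([1]\gamma_j)=[1]j$ (with analogous formulas in the digon and puncture classes). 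This reduces the condition ``the elementary move target from $\gamma_j$ lies in $PTE(\TT)$'' to ``$j[1]\in\TT$'' or ``$[1]j\in\TT$''.

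The key step is then to identify $j[1]\in\TT$ with the existence of an arrow $j\to j[1]$ in $Q^\TT$ at the shared vertex $j(1)$. If $j[1]\in\TT$, the edges $j$, $j[1]$, and the boundary segment $[j(0),j(0)+1]$ bound a triangle of $\TT$ in which $j[1]$ lies on the CCW side of $j$ at $j(1)$, immediately giving the arrow. Conversely, any arrow $j\to i$ in $Q^\TT$ with $\alpha=j(1)\in\MM$ comes from a triangle of $\TT$ in which $i$ is the CCW neighbor of $j$; the structural constraints from Propositions~\ref{prop-doublepuncturesbad},~\ref{prop-digonsbad},~\ref{prop-edgesinpunctures} together with the corollary and the acyclicity of $\TT$ force the third side of this triangle to be a boundary segment, so that $i=j[1]$. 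The case $\alpha=j(0)$ is symmetric and handled by the $[1]\gamma_j$ move. Combined with the invariance $Q^{\rho^{-1}\TT}=Q^\TT$ from the remark following Definition~\ref{def-rho}, this yields the desired bijection.

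The main obstacle is the puncture case. When $\gamma_j(0)\in\PP$, the unique elementary move is $(\gamma_j,\kappa_j)\mapsto(\overline{\gamma_j}[1],\emptyset)$, whose target has both endpoints in $\MM$; identifying when this target lies in $PTE(\TT)$ requires carefully tracking the completion $\overline{\gamma_j}$ and the tag-flipping behavior of $\rho$ at punctures, then matching the result with the arrow in $Q^\TT$ at the non-puncture endpoint of $j$. The digon subcase simultaneously contributes the two additional elementary moves to $(\lambda,\kappa)$ and $(\lambda,\kappa')$, which correspond to the arrows between the two tagged projective edges at the puncture; here Proposition~\ref{prop-edgesinpunctures} pins down the local triangulation structure tightly enough that the match becomes routine bookkeeping. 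Once these special cases are handled, the overall correspondence between elementary moves with projective target and arrows in $Q^\TT$ out of $j$ is established.
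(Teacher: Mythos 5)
Your proof takes essentially the same route as the paper's — relate arrows in $\mathcal{P}(\TT)$ to elementary moves, then match elementary moves with arrows in $Q^\TT$ via the compatibility of $\rho$ with the triangulation — but it unfolds that relationship more computationally. Where the paper invokes the remark that ``$\rho$ preserves adjacency'' and then transfers the elementary-move configuration from $i,j\in\TT$ down to $\gamma_i,\gamma_j$ in one step, you make the same transfer concrete by checking the identity $\rho(\gamma_j[1])=j[1]$ (and its variants) directly from Definition~\ref{def-rho}, so that ``the move target lies in $PTE(\TT)$'' becomes literally ``$j[1]\in\TT$''. That identity is correct and is a clean way to see why the setup works. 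You also do something the paper's proof omits: you argue the converse direction explicitly, observing that an acyclic $\TT$ can have no interior triangle (a triangle all of whose sides are triangulated edges would produce a 3-cycle in $Q^\TT$), so the third side of the triangle supporting an arrow $j\to i$ must be a boundary segment, forcing $i=j[1]$. This is a worthwhile addition and makes the ``if and only if'' honest. On the other hand, your treatment of the class-2 and class-3 (digon and puncture) elementary moves is only sketched — you correctly identify that the completion and the tag-flip of $\rho$ at punctures have to be tracked, and that Propositions~\ref{prop-doublepuncturesbad},~\ref{prop-digonsbad},~\ref{prop-edgesinpunctures} constrain the local picture, but you then assert the bookkeeping is ``routine'' without carrying it out. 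The paper's appeal to ``$\rho$ preserves adjacency'' handles those cases uniformly and is therefore cleaner, at the cost of being less explicit. Finally, a small caveat you inherit from the paper: the definition of an elementary move restricts the source to $PPTE(\TT)\setminus PTE(\TT)$, so speaking of elementary moves ``out of'' a projective tagged edge is a mild abuse that both proofs commit; it would be worth noting that the geometric configuration (not the map itself) is what is being transferred.
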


\begin{proof}
    Note that the requirement that $\TT$ be an acyclic triangulation ensures that there are always exactly two triangulated edges with endpoints in the same puncture which are either direct neighbors or share the same endpoint in $\MM$ (Proposition~\ref{prop-edgesinpunctures}).
    This in turn means that there will be no edges in $Q^\TT$ arising from punctured neighbors as these two edges will form a 2-cycle in $Q^\TT$.
    Furthermore, since the projective tagged edges are exactly one tagged rotation away from $\TT$, we can also say that there are exactly two projective tagged edges with endpoints in the same puncture. 
    This means that all arrows in $Q^\TT$ arise from two admissible tagged edges that are neighbors in $\MM$.
     
    If there is an arrow $j \to i$ in $Q^\TT$, then $i$ must be the direct counterclockwise neighbor of $j$ in $\TT$ with shared endpoint $\alpha_0 \in \MM$.
    Since they share one endpoint in $\MM$ and there are no triangulated edges between $i$ and $j$, it must be one of the cases depicted in Figure~\ref{fig:elementary}.
    It follows that there must be an elementary move $j \mapsto i$.
    Also, since $\rho$ preserves adjacency between two edges, we have that there is an elementary move $(\gamma_j, \kappa_j) \mapsto (\gamma_i, \kappa_i)$.
    By definition of the category of preprojective tagged edges, there will be an arrow $(\gamma_i, \kappa_i) \to (\gamma_j, \kappa_j)$ in $\mathcal{P(\TT)}$.
\end{proof}

\begin{definition} \label{def-coord}
    The \textbf{coordinates} of $(\gamma, \kappa) \in \mathcal{P(\TT)}$ are $(n,j)$ if $\rho^n(\gamma, \kappa) = j \in \TT$.
    Moreover, the \textbf{level} of $(\gamma, \kappa) \in \mathcal{P(\TT)}$ is $level(\gamma, \kappa) = n$.
\end{definition}

\begin{definition}
    Let $Q$ be an acyclic quiver. 
    Define $\mathbb{N}Q$ to be the following quiver:
    \begin{itemize}
        \item the vertex set is $\mathbb{N} \times Q_0$ with coordinates $(n,i)$ for each $n \in \mathbb{N}$ and $i \in Q_0$.
        \item for each arrow $i \to j \in Q_1$ and each $n \in \mathbb{N}$, there are two arrows $(n,i) \to (n,j)$ and $(n,j) \to (n+1,i)$ in $\mathbb{N}Q$   
    \end{itemize}
\end{definition}

\begin{lemma} \label{lem-postedge}
    $\mathcal{P(\TT)} \cong \mathbb{N} (Q^\TT)^{op}$ where $Q^{op}$ is the opposite quiver of $Q$ obtained by reversing all arrows in $Q$.
\end{lemma}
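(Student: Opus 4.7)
My plan is to exhibit an explicit bijection $F$ from the vertices of $\mathbb{N}(Q^\TT)^{op}$ to the objects of $\mathcal{P(\TT)}$ and then verify that it respects arrows and mesh relations. On vertices, $F$ sends $(n,i)$ to the unique preprojective tagged edge $(\gamma,\kappa)$ whose coordinates, in the sense of Definition~\ref{def-coord}, are $(n,i)$; equivalently, the unique edge satisfying $\rho^n(\gamma,\kappa) = i \in \TT$. This is well-defined and bijective because $\rho$ is invertible on homotopy classes (the endpoint shifts $[1]$ and the tag involution both have explicit inverses), and every preprojective tagged edge, by definition, reaches $\TT$ after finitely many applications of $\rho$.

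For the arrows I would proceed by induction on the level $n$. The within-level base case is precisely the content of Proposition~\ref{prop-base}, which identifies arrows among the projective tagged edges with arrows of $(Q^\TT)^{op}$. For the inductive step I would use the remark following Definition~\ref{def-rho}: $\rho$ preserves adjacency between admissible tagged edges and satisfies $Q^{\rho\TT}=Q^\TT$. Applying $\rho^{-1}$ therefore transports the within-level picture at level $n$ to the within-level picture at level $n+1$ without changing neighbour relations, so the horizontal arrows at every level match those of $(Q^\TT)^{op}$ under $F$.

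The between-level arrows $(n,j) \to (n+1,i)$ are supplied by the mesh relations. For a non-projective $(\gamma,\kappa)$ with coordinates $(n+1,i)$, the mesh relation $m_{(\gamma,\kappa)}$ pairs each incoming elementary move $\alpha:(\lambda_k,\kappa_k) \mapsto (\gamma,\kappa)$ with a corresponding move $(\alpha): \rho(\gamma,\kappa) \mapsto (\lambda_k,\kappa_k)$, where $\rho(\gamma,\kappa)$ has coordinates $(n,i)$. Translating each such pair through $F$ yields exactly one diagonal arrow $(n, F^{-1}(\lambda_k,\kappa_k)) \to (n+1,i)$ in $\mathbb{N}(Q^\TT)^{op}$, and I would verify that the collection of these diagonal arrows at $(n+1,i)$ reproduces the full family of arrows into $(n+1,i)$ demanded by $(Q^\TT)^{op}$. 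Because the mesh relation in $\mathcal{P(\TT)}$ is by construction the sum of precisely the compositions assembled by the pairing $\alpha \leftrightarrow (\alpha)$, it becomes a literal transcription of the standard mesh relation at $(n+1,i)$ in the mesh category of $\mathbb{N}(Q^\TT)^{op}$, so the morphism spaces agree.

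The main obstacle is the case analysis needed to show that the three classes of elementary moves produce exactly the right multiset of incoming moves at each vertex, especially in classes II and III where punctures, completions, and the tag involution interact nontrivially. The payoff of Propositions~\ref{prop-doublepuncturesbad}, \ref{prop-digonsbad}, and \ref{prop-edgesinpunctures} is to restrict the local pictures around the punctures sharply enough that this count can be carried out: no preprojective edge has both endpoints in $\PP$, none cuts out a twice-punctured digon, and the two edges meeting at each puncture have a controlled configuration. Under these constraints, a puncture-by-puncture local check shows that the number and targets of elementary moves into $(\gamma,\kappa)$ match the in-degree and sources at the corresponding vertex of $(Q^\TT)^{op}$, completing the induction and hence the isomorphism.
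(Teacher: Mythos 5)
Your proposal is correct and takes essentially the same approach as the paper: you establish the object bijection via the coordinate system of Definition~\ref{def-coord}, anchor the arrow structure at level $1$ in Proposition~\ref{prop-base}, and obtain the diagonal arrows from elementary moves and mesh relations. You supply more detail than the paper's terse argument, most notably by making explicit the inductive step that $\rho^{-1}$ (using the adjacency-preservation remark after Definition~\ref{def-rho}) transports the within-level arrow structure from level $n$ to level $n+1$, a point the paper leaves implicit; one small caveat is that the middle objects $(\lambda_k,\kappa_k)$ of the mesh at $(n+1,i)$ need not all sit at level $n$ (those corresponding to arrows $i\to k$ in $Q^\TT$ sit at level $n+1$), so the uniform label $(n,F^{-1}(\lambda_k,\kappa_k))$ should be split into the two level-cases when you carry out the promised local count.
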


\begin{proof}
    By Definition~\ref{def-coord}, $level(\gamma,\kappa) = 1$ if and only if $(\gamma,\kappa) \in PTE(\TT)$. 
    By Proposition~\ref{prop-base}, we know that there is an arrow $(1,i) \to (1,j)$ if and only if there is an arrow $j \to i$ in $Q^\TT$. 
    Finally, by the definition of $\mathcal{P(\TT)}$ and the definition of an elementary move, we know that for each arrow $(n,i) \to (n,j)$, there will be an arrow $(n,j) \to (n+1,i)$.
    The result follows.
\end{proof}

Note that $\rho$ can be thought of as a translation in $\mathcal{P(\TT)}$. 
Specifically, if $n > 1$, then $\rho(n,i) = (n-1, i)$ and if there is an arrow $(n,i) \to (n,j)$ in $\mathcal{P(\TT)}$, then there will be an arrow $\rho(n,i) = (n-1,i) \to (n-1,j) = \rho(n,j)$.
Also, by the definitions of the elementary move and $\rho$, we can see that there will be three types of meshes in this category: one for each class of elementary move defined earlier.
These observations will be echoed in the next section.

%%%%%%%%%%%%%%%%%%%%%%%%%%%%%%%%%%%%%%%%%%%%%%%%%%%%%%%%%%%%%%%%%
\section{The Category of Preprojective $\widetilde{D}_n$-modules}
%%%%%%%%%%%%%%%%%%%%%%%%%%%%%%%%%%%%%%%%%%%%%%%%%%%%%%%%%%%%%%%%%
This section defines the second category which is featured in Theorem~\ref{main-thm}--the category of preprojective modules of type $\widetilde{D}_n$. 
Preprojective module components of algebras have been studied extensively \cite{assem_elements_2006,auslander_representation_1995,draxler_existence_1996,gabriel_representations_1997,geis_rigid_2006,geis_rigid_2007,ringel_tame_1984,simson_elements_2007}.
Due to the well-documented nature of these preprojective components, this section mainly serves as a brief overview of topics that are relevant to the present paper.

\begin{definition}
    Let $Q$ be a quiver and $\k$ be an algebraically closed field.
    A \textbf{representation} $M = (M_i, \varphi_\alpha)_{i \in Q_0, \alpha \in Q_1}$ of $Q$ is a collection of $\k$-vector spaces $M_i$ (one for each vertex in $Q_0$) and a collection of $\k$-linear maps $\varphi_\alpha: M_{s(\alpha)} \to M_{t(\alpha)}$ (one for each arrow in $Q_1$).
    The representation is \textbf{finite-dimensional} if each $M_i$ is. 
    If $M$ is finite-dimensional, the \textbf{dimension vector} $\udim M$ of $M$ is the vector $(\dim M_i)_{i \in Q_0}$ of the dimensions of the vector spaces at each vertex.
    
    If $M = (M_i, \varphi_\alpha)$ and $M' = (M'_i, \varphi'_\alpha)$ are two representations of $Q$, a \textbf{homomorphism} of representations $f: M \to M'$ is a collection $(f_i)_{i\in Q_0}$ of linear maps $f_i: M_i \to M'_i$ such that for each arrow $\alpha: i \to j$ in $Q_1$, we have commutative diagrams $f_j \circ \varphi_\alpha(m) = \varphi'_\alpha \circ f_i(m)$ for all $m\in M_i$.
    The abelian category of all finite-dimensional representations of $Q$ with morphisms given by representation homomorphisms is denoted by $\operatorname{rep} Q$.
\end{definition}

\begin{theorem}[\cite{schiffler_quiver_2014}, Theorem 5.4]\label{reps-modules}
    Let $Q$ be a finite, connected, acyclic quiver. 
    Then the finite-dimensional representations of $Q$ are in bijection with the finite-dimensional $\k Q$ modules (up to isomorphism). 
    This bijection also applies to their homomorphisms and respects the composition of these homomorphisms.
\end{theorem}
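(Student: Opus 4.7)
The plan is to construct a pair of mutually inverse functors $F: \operatorname{rep} Q \to \mod \k Q$ and $G: \mod \k Q \to \operatorname{rep} Q$ and verify directly that they preserve morphisms and composition. The key algebraic input is the structure of $\k Q$: since $Q$ is finite and acyclic, $\k Q$ is finite-dimensional, the trivial paths $e_i$ (for $i\in Q_0$) form a complete set of orthogonal primitive idempotents with $\sum_i e_i = 1_{\k Q}$, and every arrow $\alpha: i\to j$ satisfies $e_j\alpha e_i = \alpha$.

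For $F$, given $M = (M_i,\varphi_\alpha)$, put $F(M) = \bigoplus_{i\in Q_0} M_i$ as a $\k$-vector space and define a $\k Q$-action on generators by letting $e_i$ act as the projection onto $M_i$, and each arrow $\alpha: i\to j$ act as $\varphi_\alpha$ on $M_i$ and as zero on the other summands; extend to longer paths by composition. The required relations (orthogonality and idempotency of the $e_i$, $e_j\alpha e_i = \alpha$, associativity of path concatenation) are immediate from this definition, so $F(M)$ is a well-defined $\k Q$-module of dimension $\sum_i\dim_\k M_i<\infty$. A morphism $f=(f_i): M\to M'$ of representations induces $F(f) = \bigoplus_i f_i$, which is $\k Q$-linear because it commutes with each $e_i$ by construction and with each $\alpha$ by the defining commutation $f_j\varphi_\alpha = \varphi'_\alpha f_i$.

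For $G$, given $N\in\mod\k Q$, set $N_i = e_i N$; since the $e_i$ are orthogonal idempotents summing to $1$, one has $N=\bigoplus_{i\in Q_0} N_i$ as $\k$-vector spaces. For each arrow $\alpha: i\to j$, define $\varphi_\alpha: N_i\to N_j$ as left multiplication by $\alpha$; this does land in $N_j$ because $\alpha = e_j\alpha e_i$. For a $\k Q$-linear map $g:N\to N'$, set $G(g)_i = g|_{N_i}$; this lands in $N'_i$ because $g$ commutes with $e_i$, and it intertwines the $\varphi_\alpha$ because $g$ commutes with $\alpha$. Composition is preserved in both directions because the functors act componentwise on the underlying linear maps.

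Finally, one checks that $GF\cong\operatorname{id}_{\operatorname{rep} Q}$ and $FG\cong\operatorname{id}_{\mod\k Q}$ by natural isomorphisms, both of which are essentially tautological: the $i$-component of $F(M)$ is recovered as $e_i F(M) = M_i$, and conversely $\bigoplus_i e_i N = N$ as $\k Q$-modules with the prescribed arrow actions. The only step that requires real care is the decomposition $N = \bigoplus_i e_i N$, which uses finiteness of $Q_0$ so that $\sum_i e_i$ makes sense in $\k Q$, together with acyclicity so that $\k Q$ is finite-dimensional and finite-dimensional modules decompose cleanly. I expect no substantive obstacle: the argument is a structural bookkeeping exercise matching the generators-and-relations presentation of $\k Q$ against the data that defines a representation.
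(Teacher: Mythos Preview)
The paper does not provide its own proof of this statement: it is quoted as a known result from the cited reference and no argument is given in the text. Your proposal is the standard textbook construction (and is essentially the proof one finds in the reference), building mutually inverse functors via $F(M)=\bigoplus_i M_i$ with the path action and $G(N)=(e_iN,\,\alpha\cdot-)$, so there is nothing to compare against. One small inaccuracy in your commentary: the decomposition $N=\bigoplus_i e_iN$ only requires finiteness of $Q_0$ (so that $\sum_i e_i=1$), not acyclicity; acyclicity is used elsewhere in the paper to guarantee $\k Q$ is finite-dimensional, but it plays no role in the equivalence itself.
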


Most modern work dealing with the representation theory of hereditary algebras $\mathcal{A} = \k Q$ make no distinction between a $\k Q$-module and its corresponding quiver representation.
The next definition defines a fundamental operation in the study of $\mod \k Q$: the Auslander-Reiten translate.

\begin{definition}\label{def-AR}
    Let $\mathcal{A} = \k Q$ be an irreducible hereditary algebra and let $A, B, C, M$ and $N$ be $\mathcal{A}$-modules.
    A morphism $h$ (dually $g$) in $\mod\mathcal{A}$ is a \textbf{section} (dually \textbf{retraction}) if $h$ is a right (left) inverse of some morphism in $\mod\mathcal{A}$.
    A morphism $f: A \to B$ in $\mod\mathcal{A}$ is called \textbf{irreducible} if
    \begin{enumerate}
        \item $f$ is not a section nor a retraction
        \item whenever $f = gh$ for some morphisms $h: A \to C$ and $g: C \to B$, then either $h$ is a section or $g$ is a retraction.
    \end{enumerate}
    A short exact sequence in $\mod\mathcal{A}$ $$0 \to A \to B \to C \to 0$$ is \textbf{split} if $B \cong A \oplus C$.
    A short exact sequence in $\mod\mathcal{A}$ $$0 \to N \xrightarrow{h} B \xrightarrow{g} M \to 0$$ is \textbf{almost split} (or is an \textbf{Auslander-Reiten sequence}) if $M$ and $N$ are indecomposable and $h$ and $g$ are irreducible morphisms.
    In this case, $N$ is uniquely determined and $N \cong \tau M$ where $\tau$ is the \textbf{Auslander-Reiten translation}.
\end{definition}

\begin{remark}
    The conditions given in Definition~\ref{def-AR} for an Auslander-Reiten sequence have some equivalent formulations. 
    For example, a short exact sequence 
    $$0 \to N \xrightarrow{h} B \xrightarrow{g} M \to 0$$
    is an Auslander-Reiten sequence if 
    \begin{enumerate}
        \item the sequence is not split.
        \item $h$ is not a section; for every morphism $u: N \to A$ in $\mod\mathcal{A}$ which is not a section, there exists a morphism $u': B \to A$ such that $u' f = u$; and if $k:B \to B$ exists such that $kh = f$, then $k$ is an automorphism.
        \item $g$ is not a retraction; for every morphism $v: A \to M$ in $\mod\mathcal{A}$ which is not a retraction, there exists a morphism $v': A \to B$ such that $g v' = v$; and if $k:B \to B$ exists such that $gk = g$, then $k$ is an automorphism.
    \end{enumerate}
    However, the conditions given in Definition~\ref{def-AR} are much more manageable.
\end{remark}

\begin{definition}
    Let $P(j)$ be the indecomposable projective module at vertex $j$ and let $\tau$ be the Auslander-Reiten translation. 
    Then a $\k Q$-module $M$ is called \textbf{preprojective} if $\tau^{n-1}M \cong P(j)$ for some $j \in Q_0$; in this case, the \textbf{coordinates} of $M$ will be $(n, j)$.
\end{definition}

The next definition is for the Auslander-Reiten quiver. 
It is an important object in the study of modules and/or clusters over hereditary algebras. 

\begin{definition}
    Let $\mathcal{A} = \k Q$ be an irreducible hereditary algebra. Then the \textbf{Auslander-Reiten quiver} $\Gamma(\mod \mathcal{A} )$ is the category defined as follows:
    \begin{itemize}
        \item The vertices of $\Gamma( \mod \mathcal{A} )$ are the isomorphism classes $[M]$ of indecomposable modules $M$ in $\mod \mathcal{A}$.
        \item There is an arrow $[M] \to [N]$ in $\Gamma( \mod \mathcal{A} )$ if and only if there is an irreducible morphism $M \to N$ in $\mod \mathcal{A}$.
    \end{itemize}
    In order to keep the focus on studying the extension spaces in this category (and therefore short exact sequences of modules), we need to ``glue together'' Auslander-Reiten sequences ending at $M$ (and starting at $\tau M$).
    These glued Auslander-Reiten sequences will form the \textbf{meshes} of $\Gamma(\mod \mathcal{A})$.
    If $\mathcal{A}$ is a representation-infinite hereditary algebra, then the \textbf{preprojective component} of $\Gamma( \mod \mathcal{A} )$ is denoted by $\mathcal{P(A)}$ and is the unique connected component of $\Gamma( \mod \mathcal{A} )$ that contains all indecomposable projective $\mathcal{A}$-modules.
\end{definition}

For path algebras of acyclic type $A_n, D_n,$ and $E_i$ for $i = 6,7,8$, the Auslander-Reiten quiver is finite and connected. 
This is because these algebras are of finite representation type \cite{gabriel_representations_1997}.
In this case, all modules are preprojective.

If the path algebra is of acyclic type $\widetilde{A}_n, \widetilde{D}_n,$ or $\widetilde{E}_i$ for $i = 6,7,8$, then the situation is more complicated.
The Auslander-Reiten quiver will have 3 components: the preprojective component containing all of the projective modules, the preinjective component containing all of the injective modules, and the regular component which is a disjoint union of finitely many ``tubes''.
Each of these components has a similar local structure: the meshes.
However, the global structure of the regular component is unlike the preprojective and preinjective components while the preprojective and preinjective components mirror one another. 
The geometric model presented in this paper can easily be extended to include ``preinjective'' and ``regular'' tagged edges and will be extended in a future paper. 

\begin{remark}
    Note the similar roles that $\tau$ and $\rho$ play in their respective categories.
    If an isoclass of a module in $\mathcal{P(A)}$ has coordinates $(n,i)$ where $n>1$, then $\tau(n,i) = (n-1,i)$.
\end{remark}

There are three types of meshes that appear in an Auslander-Reiten quiver of type $\widetilde{D}_n$, these are displayed in Figure~\ref{fig:meshes}.
Note the similarity of these meshes to the meshes in $\mathcal{P}(\TT)$ which had a mesh for each class of elementary move.
The following fact cements these similarities between the two categories.

\begin{figure}
    \[\begin{tikzcd}
     & N_1 \arrow{dr} & \\
     \tau L\arrow{ur} &  & L
    \end{tikzcd}\hspace{8mm}
    \begin{tikzcd}
     \tau L\arrow{dr} &  & L \\
     & N_1 \arrow{ur} & 
    \end{tikzcd}\]
    \vspace{8mm}
    \[\begin{tikzcd}
     & N_1 \arrow{dr} & \\
     \tau L\arrow{ur}\arrow{dr} &  & L \\
      & N_2\arrow{ur} &
    \end{tikzcd}\hspace{8mm}
    \begin{tikzcd}
     & N_1 \arrow{dr} & \\
     \tau L\arrow{ur}\arrow{dr}\arrow{r} & N_2\arrow{r} & L \\
      & N_3\arrow{ur} &
    \end{tikzcd}\]  
    \caption{Meshes of the Auslander-Reiten quiver of type $\widetilde{D}_n$.}
    \label{fig:meshes}
\end{figure}
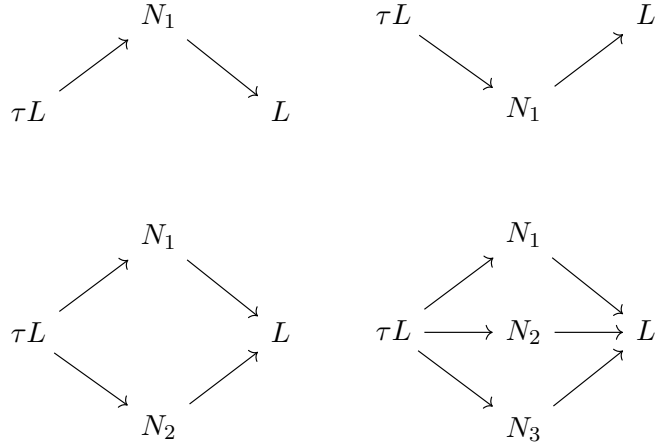

\begin{lemma}[\cite{assem_elements_2006} Corollary VIII.2.3] \label{lem-postmod}
    Assume that $\mathcal{A} = \k Q$ is the path algebra of a finite acyclic quiver $Q$ of affine Dynkin type. 
    Then $\Gamma(\mod \mathcal{A} )$ contains a unique preprojective component.
    Specifically, $\mathcal{P(A)} \cong \mathbb{N} Q^{op}$ where $Q^{op}$ is the opposite quiver of $Q$ obtained by reversing all arrows in $Q$.
\end{lemma}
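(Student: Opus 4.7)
The plan is to show that the preprojective component has a ``staircase'' structure built by starting with the projective modules as a single slice and repeatedly applying $\tau^{-1}$, with the connectivity at each step forced by Auslander-Reiten theory. First, I would identify the projectives as the sources of their component: since $Q$ is connected and acyclic, the indecomposable projectives $P(i)$ for $i \in Q_0$ all sit in a single connected component of $\Gamma(\mod \mathcal{A})$, and no projective is the target of an irreducible morphism from a non-projective (there is no AR sequence ending at a projective). So the $P(i)$ form the ``leftmost'' slice of whichever component contains them.

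Next, I would check that the full subquiver of $\Gamma(\mod \mathcal{A})$ spanned by the projectives is exactly $Q^{op}$. Since $\k Q$ is hereditary, the decomposition $\mathrm{rad}\, P(i) = \bigoplus_{\alpha: i \to j} P(j)$ (taking left modules and one summand per arrow $i \to j$ in $Q_1$) consists of projectives, and the inclusions $P(j) \hookrightarrow P(i)$ are precisely the irreducible morphisms between projectives. This identifies the projective slice with $\{1\} \times Q_0$ together with arrows $(1, i) \to (1, j)$ for each $j \to i$ in $Q$, matching $\mathbb{N} Q^{op}$ at level $1$.

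The inductive step uses AR sequences to move to the next slice. For every non-projective indecomposable $L$ in the component, there is an AR sequence $0 \to \tau L \to B \to L \to 0$, and the indecomposable summands of $B$ are in bijection with the irreducible morphisms $B_k \to L$ and with the irreducible morphisms $\tau L \to B_k$. If I have inductively built the slice $\{n\} \times Q_0$ with arrows matching $Q^{op}$, then applying $\tau^{-1}$ to each vertex produces candidates for the slice $\{n+1\} \times Q_0$, and each mesh $\tau L \to \bigoplus_k B_k \to L$ forces precisely the arrows $(n, j) \to (n+1, i)$ for every arrow $i \to j$ in $Q$, which is exactly the rule defining $\mathbb{N} Q^{op}$. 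Uniqueness of the preprojective component falls out because the connectedness of $Q$ puts all projectives in one component, and by definition every preprojective module $\tau^{-k}P(i)$ is $\tau$-connected to a projective.

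The main obstacle is ensuring that this inductive construction never degenerates: one must verify that $\tau^{-k} P(i) \neq 0$ for all $k \geq 0$ and all $i \in Q_0$, and that distinct pairs $(n, i)$ yield non-isomorphic indecomposables. This is where the affine Dynkin hypothesis enters. Classically, $\k Q$ is representation-infinite with no preprojective injective module, so $\tau^{-1}$ is defined and injective on the entire preprojective class and never returns to a previously constructed vertex; combined with the fact that a preprojective module lies in the $\mathbf{P}(\mathcal{A})$-component iff it is $\tau$-reachable from a projective, this yields the claimed isomorphism $\mathcal{P}(\mathcal{A}) \cong \mathbb{N} Q^{op}$. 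I would also note that no extra arrows can appear in $\mathcal{P}(\mathcal{A})$ beyond those produced by meshes, since AR theory asserts that the mesh ending at each $L$ accounts for \emph{all} irreducible morphisms into $L$ within the component.
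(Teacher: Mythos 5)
The paper supplies no proof of this lemma; it is cited directly from the reference \cite{assem_elements_2006} (Corollary VIII.2.3), so there is no in-paper argument to compare against. Your sketch is an accurate rendering of the standard argument underlying that citation: identify the projective slice with $Q^{op}$ via $\operatorname{rad} P(i) \cong \bigoplus_{\alpha\colon i\to j}P(j)$ for hereditary $\k Q$, then grow the component inductively by applying $\tau^{-1}$ and reading off the arrows from the meshes, with representation-infiniteness (implied by the affine hypothesis) ensuring that $\tau^{-k}P(i)$ is never zero or injective, so the staircase neither terminates nor repeats, and the mesh-completeness of the Auslander-Reiten quiver ensuring no extra arrows appear. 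One index slip worth fixing: the diagonal arrow rule you state in the inductive step, namely $(n,j)\to(n+1,i)$ for every arrow $i\to j$ in $Q$, is inconsistent both with your own level-$1$ convention ($(1,i)\to(1,j)$ for $j\to i$ in $Q$) and with the paper's definition of $\mathbb{N}Q$; the mesh at $(n+1,i)$, whose middle-term summands at level $n$ sit at $(n,j)$ for arrows $j\to i$ in $Q$, actually yields $(n,j)\to(n+1,i)$ for every $j\to i$ in $Q$. It is also worth being explicit that your argument only uses that $\k Q$ is representation-infinite (no preprojective is injective), which is the correct level of generality for the cited corollary; the affine hypothesis enters only to guarantee that.
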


%%%%%%%%%%%%%%%%%%%%%%%%%%%%%%%%%%%%%%%%%%%%%%%%%%%%%%%%%%%%%%%%%
\section{Equivalence of Categories}
%%%%%%%%%%%%%%%%%%%%%%%%%%%%%%%%%%%%%%%%%%%%%%%%%%%%%%%%%%%%%%%%%

\begin{theorem}
    Let $\TT$ be a triangulation of a twice-punctured $n$-gon.
    There is an equivalence of categories 
    $$\varphi : \mathcal{P}(\TT) \to \mathcal{P}(\k Q^\TT)$$
    such that 
    \begin{enumerate}
        \item $\varphi$ maps projective tagged edges to projective indecomposable modules and respects the labeling
        \item $\varphi \circ \rho = \tau \circ \varphi$
    \end{enumerate}
\end{theorem}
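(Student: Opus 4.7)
The plan is to exploit Lemmas~\ref{lem-postedge} and~\ref{lem-postmod}, which identify the underlying quivers of $\mathcal{P}(\TT)$ and $\mathcal{P}(\k Q^\TT)$ as the common quiver $\mathbb{N}(Q^\TT)^{op}$. Since both categories are $\k$-linear mesh categories over this quiver, the equivalence should arise by matching coordinates on each side.

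First, I would define $\varphi$ on objects by sending a preprojective tagged edge with coordinates $(n,j)$ in the sense of Definition~\ref{def-coord} to the unique indecomposable preprojective $\k Q^\TT$-module with AR coordinates $(n,j)$. This is well-defined because on both sides the coordinates parametrize the vertex set of $\mathbb{N}(Q^\TT)^{op}$ bijectively. Next, I would extend $\varphi$ to morphisms by sending each generating arrow of $\mathcal{P}(\TT)$ (an arrow of $\mathbb{N}(Q^\TT)^{op}$ by Lemma~\ref{lem-postedge}) to the corresponding irreducible morphism of $\mathcal{P}(\k Q^\TT)$ (which exists and is unique up to scalar by Lemma~\ref{lem-postmod}). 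Property (1) is then immediate: a projective tagged edge has coordinates $(1,j)$ and thus maps to the indecomposable projective $P(j)$. Property (2) holds because both $\rho$ and $\tau$ act on coordinates as the level-shift $(n,j)\mapsto (n-1,j)$ for $n>1$, by Definition~\ref{def-coord} and the preprojective structure of $\mathcal{P}(\k Q^\TT)$.

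The main remaining task, and the main obstacle, is to verify that $\varphi$ respects the relations in both categories, i.e., that each mesh relation $m_{(\gamma,\kappa)} = \sum_\alpha (\alpha)\alpha$ at a non-projective tagged edge is sent to the corresponding almost split relation centered at $\varphi(\gamma,\kappa)$. Because the ambient quivers agree, what must be checked is that the number and arrangement of middle terms of each elementary-move mesh in $\mathcal{P}(\TT)$ matches that of the almost split sequence centered at $\varphi(\gamma,\kappa)$ in $\mathcal{P}(\k Q^\TT)$. The three classes of elementary moves in Figure~\ref{fig:elementary} should produce meshes with $2$, $3$, and $1$ middle terms respectively, mirroring the three non-trivial mesh shapes of type $\widetilde{D}_n$ displayed in Figure~\ref{fig:meshes}. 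I expect the verification to reduce to a case analysis based on the endpoints of $(\gamma,\kappa)$ and the local boundary structure near the punctures, invoking Propositions~\ref{prop-doublepuncturesbad}, \ref{prop-digonsbad}, and~\ref{prop-edgesinpunctures} to rule out configurations that would produce non-acyclic or non-preprojective data.

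Finally, since $\varphi$ bijects generators with generators and respects the full set of mesh relations, it extends to a well-defined $\k$-linear functor that is both fully faithful and essentially surjective, yielding the desired equivalence of categories.
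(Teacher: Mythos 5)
Your proposal takes essentially the same approach as the paper: both proofs invoke Lemmas~\ref{lem-postedge} and~\ref{lem-postmod} to identify the two categories' underlying translation quivers with $\mathbb{N}(Q^\TT)^{op}$, define $\varphi$ by matching the $(n,j)$ coordinates, and read off properties (1) and (2) from that identification. Where you diverge is in flagging, as a remaining obligation, the verification that $\varphi$ carries mesh relations to almost split relations; the paper's proof is terser, appealing only to ``the same structure'' and ``the same basic elements'' without explicitly discussing how the three classes of elementary-move meshes line up with the three AR mesh shapes of Figure~\ref{fig:meshes}. This extra care is warranted --- two $\k$-linear categories with isomorphic underlying quivers need not be equivalent unless the relations also agree --- and your proposed case analysis (using Propositions~\ref{prop-doublepuncturesbad}, \ref{prop-digonsbad}, \ref{prop-edgesinpunctures} to control local configurations at punctures) is the right way to discharge it, though you leave the actual verification as an expectation rather than carrying it out. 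So the approach matches the paper's, but you have made the implicit mesh-matching step explicit, which strengthens the argument.
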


\begin{proof}
    Fix an admissible triangulation $\TT$ of a twice-punctured $n$-gon. 
    This gives rise to an acyclic quiver $Q^\TT$ of type $\widetilde{D}_{n+2}$ with each tagged edge $i \in \TT$ corresponding to a vertex $i \in Q^\TT_0$. 
    Let $\k Q^\TT$ be the path algebra of this quiver.
    
    The projective tagged edges are the edges such that $\rho^1(\gamma, \kappa) \in \TT$.
    Specifically, there is precisely one projective tagged edge corresponding to each of the $n+3$ elements of the triangulation $\TT$.
    Similarly, there is exactly one projective module for each of the $n+3$ vertices of the quiver $Q^\TT$ of type $\widetilde{D}_{n+2}$. 
    If a tagged edge $(\gamma,\kappa) \in \TT$ corresponds to vertex $i$ in $Q^\TT$, then $\varphi (\rho^{-1}(\gamma, \kappa)) \mapsto P(i)$. 
    Therefore, $\varphi$ maps projective tagged edges to projective indecomposable modules.

    Since the two categories have the same structure (Lemmas~\ref{lem-postedge} and~\ref{lem-postmod}) and they have the same basic elements, the result follows.
\end{proof}

The equivalence of categories identifies a homotopy class of a preprojective tagged edge with an isomorphism class of an indecomposable preprojective module $M = M(\gamma,\kappa)$.
By observing the intersections of a curve with the triangulation and comparing this with the dimension vector of the module it represents, the following result is immediate.

\begin{corollary}\label{cor-int-mod}
    Let $\TT$ be a triangulation of a twice-punctured $n$-gon. Then a preprojective tagged edge $(\gamma, \kappa) \in PPTE(\TT)$ corresponds to the isomorphism class of the indecomposable module with dimension vector $$\udim M(\gamma, \kappa) = (\Int((\gamma,\kappa), i ))_{i \in \TT}.$$
\end{corollary}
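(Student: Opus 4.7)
The plan is to establish the identification of dimension vectors with intersection vectors by induction on the level of the preprojective tagged edge, leveraging the equivalence $\varphi : \mathcal{P}(\TT) \to \mathcal{P}(\k Q^\TT)$ and the fact (built into the construction of both categories as $\mathbb{N}(Q^\TT)^{op}$ by Lemmas~\ref{lem-postedge} and~\ref{lem-postmod}) that meshes correspond under $\varphi$ to Auslander--Reiten sequences.

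For the base case of level $1$, I would fix a projective tagged edge $\rho^{-1}(i)$ with $\varphi(\rho^{-1}(i)) = P(i)$ and check that for each $j \in \TT$, $\dim_\k e_j P(i)$ (the number of paths from $i$ to $j$ in $Q^\TT$) equals $\Int(\rho^{-1}(i), j)$. This is a local computation: applying $\rho^{-1}$ slides each endpoint of $i$ to the next clockwise marked point, and the resulting curve crosses exactly those triangulated edges $j$ for which an arrow originates at $i$ in $Q^\TT$ (composed through intermediate arrows to account for longer paths), in agreement with the definition of $Q^\TT$ from $\TT$ in Definition~\ref{def-qt}. For the inductive step, assume the formula for every preprojective tagged edge of level $\le n$, and let $(\gamma,\kappa)$ have level $n+1$. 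Writing $(\lambda_1,\kappa_{(1)}),\ldots,(\lambda_k,\kappa_{(k)})$ for the middle terms of the mesh at $(\gamma,\kappa)$ (one mesh per each of the three classes of elementary moves of Figure~\ref{fig:elementary}), the equivalence $\varphi$ sends this to an Auslander--Reiten sequence
$$0 \to \tau M \to \bigoplus_{\ell=1}^{k} N_\ell \to M \to 0$$
in $\mod \k Q^\TT$ with $M = \varphi(\gamma,\kappa)$, $\tau M = \varphi(\rho(\gamma,\kappa))$, and $N_\ell = \varphi(\lambda_\ell,\kappa_{(\ell)})$. Exactness yields
$$\udim M = \sum_{\ell=1}^{k} \udim N_\ell - \udim \tau M,$$
and the inductive hypothesis applies to $\tau M$ and to every $N_\ell$, since each has level at most $n$. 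The induction therefore closes as soon as one proves the purely geometric mesh identity
$$\Int((\gamma,\kappa), i) + \Int(\rho(\gamma,\kappa), i) = \sum_{\ell=1}^{k} \Int((\lambda_\ell,\kappa_{(\ell)}), i) \qquad \text{for every } i \in \TT.$$

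The hard part is this geometric mesh identity, which I would verify by a local case analysis matching the three classes of elementary moves. In class (1), with $\gamma$ having both endpoints in $\MM$ and not forming a once-punctured digon, the identity reduces to the standard skein-type four-term relation $\Int(\gamma,i) + \Int([1]\gamma[1],i) = \Int(\gamma[1],i) + \Int([1]\gamma,i)$, obtained by isolating intersections of $i$ with $\gamma$ in a small neighborhood of each of $\gamma(0)$ and $\gamma(1)$ and sliding endpoints one marked point counterclockwise. Class (3), where $\gamma(0) \in \PP$ and $\gamma(1) \in \MM$, gives a mesh with a single middle term $\overline{\gamma}[1]$ and the identity reduces to $\Int((\gamma,\kappa),i) + \Int(\rho(\gamma,\kappa),i) = \Int(\overline{\gamma}[1],i)$, which is proved using Definition~\ref{def-completion} of the completion. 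The most delicate case is class (2), in which the boundary and $\gamma$ bound a once-punctured digon and the mesh has three middle terms $\gamma[1], (\lambda,\kappa), (\lambda,\kappa')$; here Definition~\ref{def-puctintersection} of punctured intersections must be invoked to show that the contributions of the two differently-tagged edges $(\lambda,\kappa)$ and $(\lambda,\kappa')$ sum correctly to match $\Int(\gamma,i) + \Int([1]\gamma[1],i) - \Int(\gamma[1],i)$ for every triangulated edge $i$, with the punctured-intersection bookkeeping accounting for whether $i$ itself ends at the enclosed puncture. Once all three cases are dispatched, the inductive step closes and Corollary~\ref{cor-int-mod} follows from the equivalence $\varphi$.
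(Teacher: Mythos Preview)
Your approach is substantially more detailed than the paper's, which offers no argument beyond the sentence ``By observing the intersections of a curve with the triangulation and comparing this with the dimension vector of the module it represents, the following result is immediate.'' So there is nothing to compare against on the paper's side; you are supplying an honest proof where the paper asserts obviousness. The inductive strategy via Auslander--Reiten sequences and the corresponding geometric mesh identity is a sound and standard way to establish such intersection--dimension vector formulas, and your identification of the three-case local analysis as ``the hard part'' is accurate.

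There is, however, a genuine gap in your induction as written. You induct on the level (the $\tau$-orbit coordinate), claiming that ``the inductive hypothesis applies to $\tau M$ and to every $N_\ell$, since each has level at most $n$.'' This is false in general: in $\mathbb{N}(Q^\TT)^{op}$ the mesh ending at $(n+1,i)$ has middle terms of the form $(n,j)$ for each arrow $i\to j$ in $Q^\TT$ \emph{and} $(n+1,k)$ for each arrow $k\to i$ in $Q^\TT$. So some $N_\ell$ sit at the same level $n+1$ as $M$ itself, and your hypothesis does not cover them. The fix is routine---induct instead on any linear extension of the partial order on $\mathbb{N}(Q^\TT)^{op}$ given by existence of a directed path (this is well-founded since the preprojective component has no oriented cycles), or equivalently refine within each level using a topological ordering of the acyclic quiver $Q^\TT$---but it must be stated. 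Separately, the base case deserves more than ``a local computation'': for an arbitrary acyclic orientation of $\widetilde{D}_n$, the $j$-th entry of $\udim P(i)$ counts paths $i\rightsquigarrow j$, which can be $0$, $1$, or $2$, and matching this against $\Int(\rho^{-1}(i),j)$ requires a genuine (if finite) case check near the branch vertices.
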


%%%%%%%%%%%%%%%%%%%%%%%%%%%%%%%%%%%%%%%%%%%%%%%%%%%%%%%%%%%%%%%%%
\section{The Intersection-Dimension Formula}
%%%%%%%%%%%%%%%%%%%%%%%%%%%%%%%%%%%%%%%%%%%%%%%%%%%%%%%%%%%%%%%%%

\begin{figure}
    \centering
    \includegraphics[scale = 0.5]{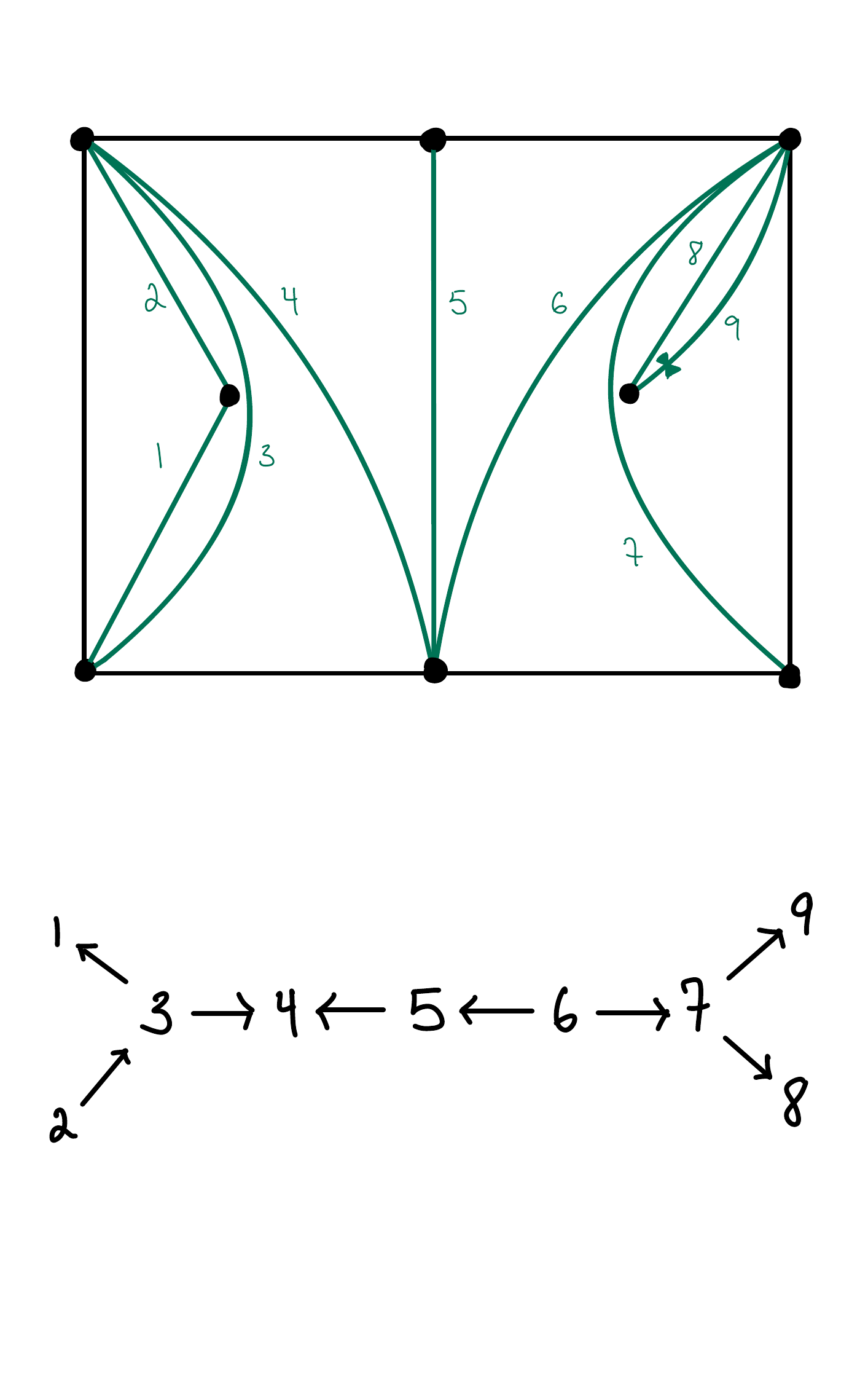}
    \caption{A triangulation of type $\widetilde{D}_8$ with along with $Q^\TT$.}
    \label{fig:example-triangulation}
\end{figure}

The equivalence of categories along with Corollary~\ref{cor-int-mod} suggests there might be a way of interpreting the homological data in the preprojective module category in terms of the geometric data in the category of preprojective tagged edges since, upon inspection, both follow similar patterns.
For the sake of the completeness of this paper, some definitions which are used in the Auslander-Reiten formulas are given before the proof of the intersection-dimension formula. 
For a more in-depth treatment of these topics, see \cite{assem_elements_2006,auslander_representation_1995,schiffler_quiver_2014}.

\begin{definition}
    The \textbf{duality} $$D = \Hom_\k (-,\k): \operatorname{rep} Q \to \operatorname{rep} Q^{op}$$ is the contravariant functor defined as follows:
    \begin{itemize}
        \item For representations $M = (M_i, \varphi_\alpha)$, we have $$DM = (DM_i, D\varphi_{\alpha^{op}})_{i\in Q_0, \alpha \in Q_1},$$ where $DM_i$ is the dual vector space and if $\alpha$ is an arrow in $Q$ then $D\varphi_{\alpha^{op}}$ is the pullback of $\varphi_\alpha$ \begin{align*}
            D\varphi_{\alpha^{op}}: DM_{t(\alpha)} &\to DM_{s(\alpha)} \\
                                    u &\mapsto u \circ \varphi_\alpha.
        \end{align*}
        \item For homomorphisms $f:M \to N$ in $\operatorname{rep} Q$, we have $Df : DN \to DM$ in $\operatorname{rep} Q^{op}$ defined by $Df(u) = u \circ f$.
    \end{itemize}
\end{definition}

\begin{definition} \label{linehome}
    Let $P(M,N)$ be the set of all homomorphisms $f \in \Hom(M,N)$ such that $f$ factors through a projective $\mathcal{A}$-module, and define $$\underline\Hom(M,N) = \Hom(M,N)/P(M,N).$$
    Dually, let $I(M,N)$ be the set of all homomorphisms $f \in \Hom(M,N)$ such that $f$ factors through an injective $\mathcal{A}$-module, and define $$\overline\Hom(M,N) = \Hom(M,N)/I(M,N).$$
\end{definition}

\begin{theorem}[Auslander-Reiten formulas]
    Let $M,N$ be $\mathcal{A}$-modules. 
    Then there are isomorphisms $$\Ext^1(M,N) \cong D \underline{\Hom} (\tau^{-1}N, M) \cong D \overline{\Hom} (N, \tau M).$$
\end{theorem}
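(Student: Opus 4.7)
The plan is to prove the Auslander-Reiten formulas along the classical lines using the transpose functor and the Nakayama functor $\nu = D \circ \Hom_{\mathcal{A}}(-,\mathcal{A})$; the hereditary hypothesis on $\mathcal{A}$ simplifies the argument considerably, since second $\operatorname{Ext}$ groups vanish.

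First I would fix a minimal projective presentation $0 \to P_1 \to P_0 \to M \to 0$ (the left-hand map is an embedding by heredity). Applying $\Hom_{\mathcal{A}}(-,\mathcal{A})$ gives an exact sequence of right $\mathcal{A}$-modules whose rightmost nonzero term is the transpose $\operatorname{Tr} M$, so that $\tau M = D \operatorname{Tr} M$. Dualizing produces the four-term exact sequence
$$0 \to \tau M \to \nu P_1 \to \nu P_0 \to \nu M \to 0,$$
in which $\nu P_0$ and $\nu P_1$ are injective. In parallel, I would apply $\Hom_{\mathcal{A}}(-,N)$ to the projective presentation of $M$ to obtain
$$0 \to \Hom(M,N) \to \Hom(P_0,N) \to \Hom(P_1,N) \to \Ext^1(M,N) \to 0,$$
then apply $D$ and use the Nakayama adjunction $D\Hom_{\mathcal{A}}(P,N) \cong \Hom_{\mathcal{A}}(N,\nu P)$, natural in both arguments for projective $P$, to translate the dualized sequence into one with entries $\Hom(N,\nu P_i)$.

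Applying $\Hom_{\mathcal{A}}(N,-)$ to the four-term sequence containing $\tau M$ yields a matching diagram, and a diagram chase identifies $D\Ext^1(M,N)$ with $\overline{\Hom}(N,\tau M)$: a morphism $N \to \tau M$ represents the trivial class precisely when it factors through the injective $\nu P_1$, i.e.\ when it lies in $I(N,\tau M)$. The second isomorphism $\Ext^1(M,N) \cong D\underline{\Hom}(\tau^{-1}N, M)$ is proved by the dual argument: start from a minimal injective copresentation of $N$, apply the inverse Nakayama functor to obtain a projective copresentation of $\tau^{-1}N$, and repeat the above analysis with the roles of injectives and projectives swapped.

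The main technical point is the diagram chase in the preceding paragraph: one must track naturality of the Nakayama adjunction carefully to see that factoring through $\nu P_1$ coincides exactly with membership in $I(N,\tau M)$, rather than ending with ordinary $\Hom(N,\tau M)$. Since the Auslander-Reiten formulas are a classical result, an alternative (and arguably preferable) route in this paper would be simply to cite a standard reference such as Assem-Simson-Skowro\'{n}ski rather than reproduce the full computation.
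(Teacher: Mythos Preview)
Your outline is the standard, correct argument for the Auslander--Reiten formulas via the Nakayama functor and transpose, and the diagram chase you describe is exactly how the identification with $\overline{\Hom}$ and $\underline{\Hom}$ goes. However, the paper does not prove this theorem at all: it simply states the result as a classical fact, in the spirit of the background material in Section~7, and proceeds directly to the corollary $\dim_\k \Ext^1(M,N) = \dim_\k \Hom(N,\tau M)$. Your final remark---that one might prefer to cite Assem--Simson--Skowro\'{n}ski rather than reproduce the computation---is precisely the route the paper takes. So your proposal supplies strictly more than the paper does here; for the purposes of this paper, a citation would suffice.
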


\begin{corollary} \label{ext-hom}
    Let $M,N$ be $\mathcal{A}$-modules. Then $$\dim_\k \Ext^1(M,N) = \dim_\k \Hom (N,\tau M) = \dim_\k \Hom(\tau^{-1}N, M).$$
\end{corollary}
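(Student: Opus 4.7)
The plan is to derive Corollary \ref{ext-hom} directly from the Auslander--Reiten formulas stated immediately above. Taking $\dim_\k$ of both sides of $\Ext^1(M,N) \cong D\underline{\Hom}(\tau^{-1}N, M) \cong D\overline{\Hom}(N, \tau M)$ and using that $\dim_\k DV = \dim_\k V$ for any finite-dimensional $\k$-vector space $V$ immediately yields
\[
\dim_\k \Ext^1(M,N) = \dim_\k \underline{\Hom}(\tau^{-1}N, M) = \dim_\k \overline{\Hom}(N, \tau M).
\]
Thus the remaining content of the corollary is the identification of $\underline{\Hom}$ and $\overline{\Hom}$ with $\Hom$ in the relevant slots; equivalently, the vanishings $P(\tau^{-1}N, M) = 0$ and $I(N, \tau M) = 0$.

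First I would reduce to the indecomposable case: since $\Hom$, $\Ext^1$, and $\tau$ all distribute over finite direct sums, and $\tau$ annihilates projective summands while $\tau^{-1}$ annihilates injective summands, it suffices to verify both equalities when $M$ and $N$ are indecomposable. Second, I would exploit that $\mathcal{A} = \k Q^\TT$ is hereditary (global dimension $1$): this forces $P(X,Y) = 0$ whenever $X$ is an indecomposable non-projective module. Indeed, if $f : X \to Y$ factors as $X \xrightarrow{g} P \xrightarrow{h} Y$ with $P$ projective, then $\operatorname{im}(g)$ is a submodule of $P$, hence projective because $\mathcal{A}$ is hereditary; the surjection $X \twoheadrightarrow \operatorname{im}(g)$ onto a projective splits, making $\operatorname{im}(g)$ a direct summand of the indecomposable $X$, which forces $\operatorname{im}(g) = 0$ and so $f = 0$. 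Dually, $I(X,Y) = 0$ whenever $Y$ is indecomposable non-injective.

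With these vanishings in hand, the final step is an easy case distinction. If $N$ is indecomposable non-injective, then $\tau^{-1}N$ is indecomposable non-projective, so $P(\tau^{-1}N, M) = 0$ and $\underline{\Hom}(\tau^{-1}N, M) = \Hom(\tau^{-1}N, M)$. If $N$ is injective, then $\tau^{-1}N = 0$, so $\Hom(\tau^{-1}N, M) = 0$, and $\Ext^1(M,N) = 0$ as well because $N$ is injective; both sides of the identity thus vanish trivially. The same dichotomy applied to $M$ (projective versus indecomposable non-projective) handles the equality $\overline{\Hom}(N, \tau M) = \Hom(N, \tau M)$.

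The main obstacle---really the only non-formal point---is establishing $P(X,Y) = 0$ for indecomposable non-projective $X$ over a hereditary algebra. This is a standard consequence of the defining property of hereditary rings via the short argument sketched above, so once the Auslander--Reiten formulas are in hand, the corollary follows essentially by inspection.
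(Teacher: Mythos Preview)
Your argument is correct and complete. The paper itself states Corollary~\ref{ext-hom} without proof, treating it as an immediate consequence of the Auslander--Reiten formulas; you have supplied precisely the standard justification that the paper leaves implicit, namely that over a hereditary algebra the quotients $\underline{\Hom}$ and $\overline{\Hom}$ collapse to $\Hom$ in the relevant arguments because $\tau^{-1}N$ has no projective summands and $\tau M$ has no injective summands. Your reduction to indecomposables and the splitting argument via the hereditary property are the right way to make this rigorous, and the boundary cases ($N$ injective, $M$ projective) are handled cleanly.
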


The proof of the intersection-dimension formula requires knowledge about the relative position of two objects in the category of preprojective tagged edges (or modules).
This necessitates the following definition.

\begin{figure}
    \centering
    \includegraphics[scale = 0.7]{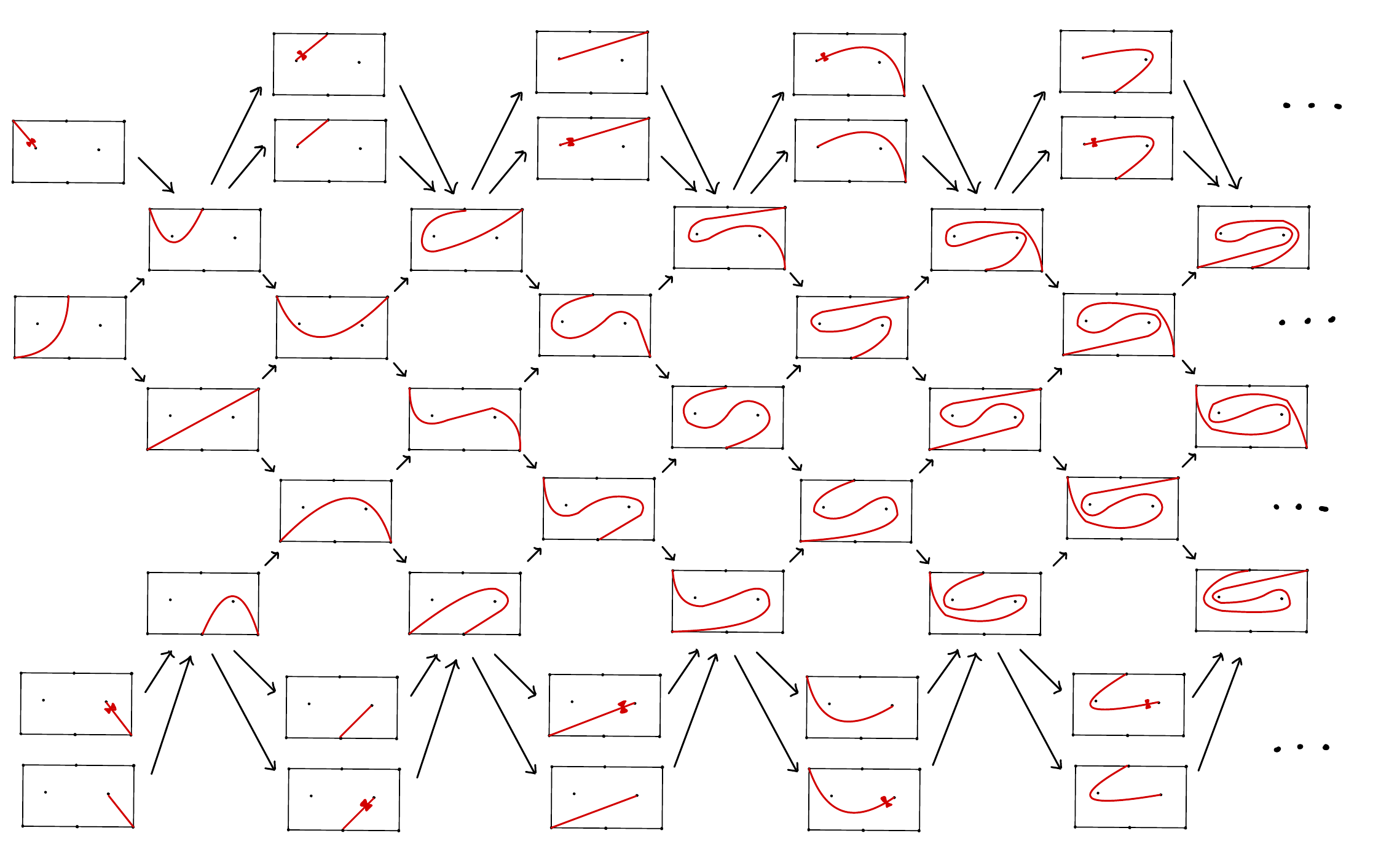}
    \caption{The beginning of $\mathcal{P(\TT)}$ for the triangulation in Figure~\ref{fig:example-triangulation}.}
    \label{fig:example-curves}
\end{figure}

\begin{figure}
    \centering
    \includegraphics[scale = 0.7]{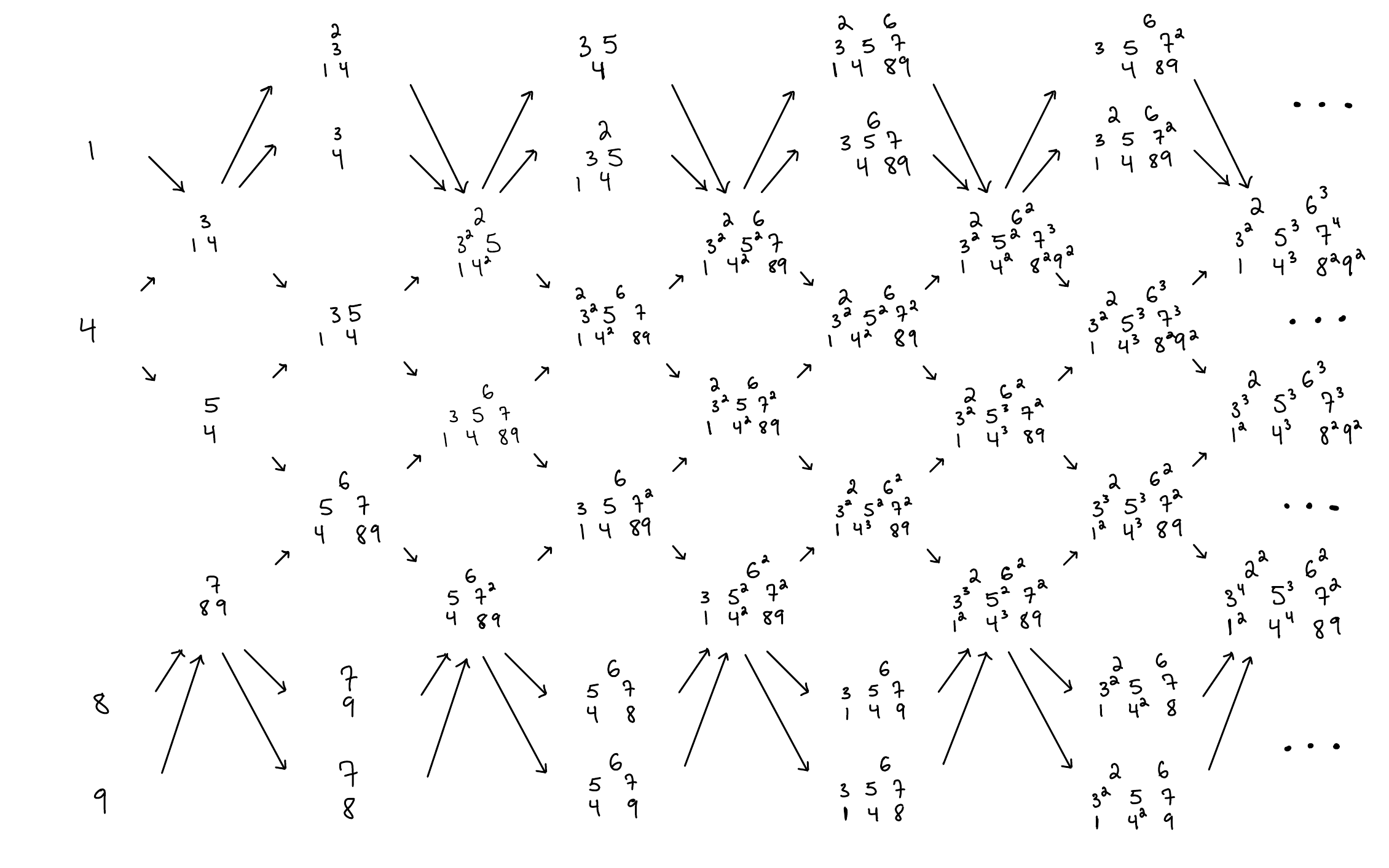}
    \caption{The beginning of $\mathcal{P}(\k Q^\TT)$ for the triangulation in Figure~\ref{fig:example-triangulation}. Dimension vectors in the style of \cite{schiffler_quiver_2014} are given for each isoclass of indecomposable module.}
    \label{fig:example-modules}
\end{figure}

\begin{definition}\label{def-relativecoord}
    Fix a module $M_i = M(\gamma_i, \kappa_i) \in \mathcal{P}(\k Q^\TT)$. 
    Then another module $N \in \mathcal{P}(\k Q^\TT)$ has \textbf{coordinates relative to $M_i$} of $(a,b)^\kappa_{M_i}$ if $N$ can be reached from $M_i$ using $a \in \mathbb{Z}$ applications of $\tau^{-1} = \rho^{-1}$, $b \in \mathbb{Z}$ moves along a (directed) diagonal, and tagging the resulting edge using the map $\kappa \in \{0,1,\emptyset\}$.
\end{definition}

\begin{remark}
    Recall that $\tau^{-1} = \rho^{-1}$ moves you horizontally to the right through $\mathcal{P}(\k Q^\TT)$, so $a \geq 0$ if and only if $N$ lies on or to the right of the directed diagonals emanating from $M_i$.
    Also, $b < 0$ if you move down along a diagonal, $b > 0$ if you move up along a diagonal, and $b = 0$ if no diagonal moves are required.
    For example, in Figure~\ref{fig:example-ext-dims}, the slanted rectangle of 1's has vertices with relative coordinates $(1,0)^\emptyset_M, (1,2)^\emptyset_M, (1,-5)^\emptyset_M$ and $(3,-3)^\emptyset_M$.
    It is worth mentioning that there are some modules with the same relative coordinates but different tagging; in Figure~\ref{fig:example-ext-dims}, these have relative coordinates $(a,3)^\kappa_M$ and $(a,-6)^\kappa_M$ for all $a \geq 0$ and $\kappa = 0,1$. 
\end{remark}

\begin{figure}
\[\begin{tikzcd}[scale cd = 0.6, sep = tiny]
     \cdots &  & 0 &   & 0 &   & 1 &   & 1 &   & 1 &   & 1 &   & 1 &   & 1 &   & 2 &   & 2 &   & 2 &   & 3 &   & \cdots \\
            &  & 0 &   & 0 &   & 1 &   & 1 &   & 1 &   & 1 &   & 1 &   & 1 &   & 2 &   & 2 &   & 2 &   & 3 &   & \\
     \cdots &0 &   & 0 &   & 1\arrow{rd} &   & 2 &   & 2 &   & 2 &   & 2 &   & 2 &   & 3\arrow{rd} &   & 4 &   & 4 &   & 5\arrow{rd} &   & 6 & \cdots \\
            &  & 0 &   & 1\arrow{ru} &   & 1\arrow{rd} &   & 2 &   & 2 &   & 2 &   & 2 &   & 3\arrow{ru} &   & 3\arrow{rd} &   & 4 &   & 5\arrow{ru} &   & 5\arrow{rd} &   & \\
     \cdots &M &   & 1\arrow{ru}\arrow{rd} &   & 1 &   & 1\arrow{rd} &   & 2 &   & 2 &   & 2 &   & 3\arrow{ru} &   & 3 &   & 3 &   & 5\arrow{ru}\arrow{rd} &   & 5 &   & 5 & \cdots \\
            &  & 0 &   & 1\arrow{rd} &   & 1 &   & 1\arrow{rd} &   & 2 &   & 2 &   & 3\arrow{ru} &   & 3 &   & 3\arrow{ru} &   & 4 &   & 5\arrow{rd} &   & 5 &   & \\
     \cdots &0 &   & 0 &   & 1\arrow{rd} &   & 1 &   & 1\arrow{rd} &   & 2 &   & 3\arrow{ru} &   & 3 &   & 3\arrow{ru} &   & 4 &   & 4 &   & 5\arrow{rd} &   & 5 & \cdots \\
            &  & 0 &   & 0 &   & 1\arrow{rd} &   & 1 &   & 1 &   & 3\arrow{ru}\arrow{rd} &   & 3 &   & 3\arrow{ru} &   & 4 &   & 4 &   & 4 &   & 5\arrow{rd} &   & \\
     \cdots &0 &   & 0 &   & 0 &   & 1\arrow{rd} &   & 1\arrow{ru} &   & 2 &   & 3\arrow{rd} &   & 3\arrow{ru} &   & 4 &   & 4 &   & 4 &   & 4 &   & 5 & \cdots \\
            &  & 0 &   & 0 &   & 0 &   & 1\arrow{ru} &   & 2 &   & 2 &   & 3\arrow{ru} &   & 4 &   & 4 &   & 4 &   & 4 &   & 4 &   & \\
            &0 &   & 0 &   & 0 &   & 0 &   & 1 &   & 1 &   & 1 &   & 2 &   & 2 &   & 2 &   & 2 &   & 2 &   & 2 & \\
    \cdots  &0 &   & 0 &   & 0 &   & 0 &   & 1 &   & 1 &   & 1 &   & 2 &   & 2 &   & 2 &   & 2 &   & 2 &   & 2 & \cdots
    \end{tikzcd}\]
    \caption{Some values of $\dim_\k\Ext(-,M) = \dim_\k\Hom(\tau^{-1}M,-)$ for a module in $\mathcal{P}(\k Q^\TT)$ of type $\widetilde{D}_{11}$ with selected arrows included to show the pattern.}
    \label{fig:example-ext-dims}
\end{figure}
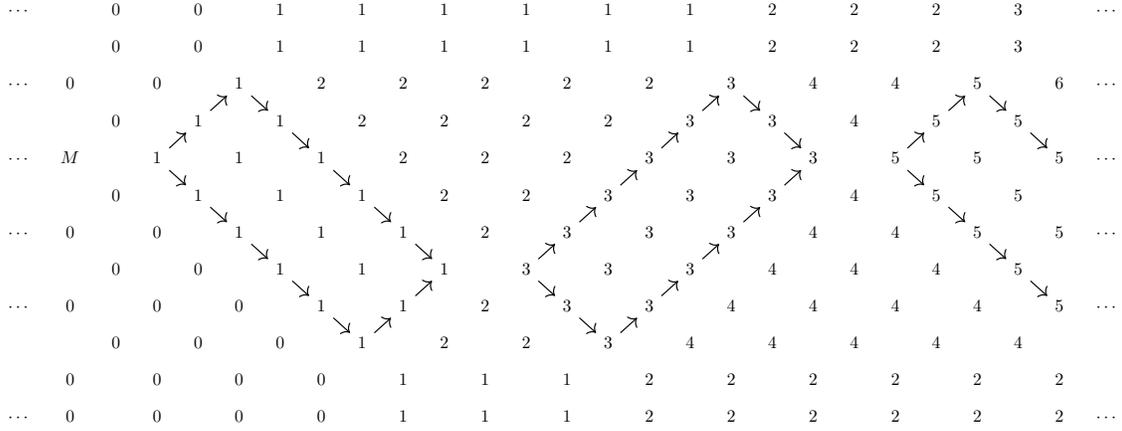

\begin{figure}
\[\begin{tikzcd}[scale cd = 0.65, sep = tiny]
     \cdots & M &   & 1 &   & 0 &   & 1 &   & 0 &   & 1 &   & 0 &   & 2 &   & 1 &   & 2 &   & 1 &   & 2 &   & \cdots \\
            & 0 &   & 0 &   & 1 &   & 0 &   & 1 &   & 0 &   & 1 &   & 1 &   & 2 &   & 1 &   & 2 &   & 1 &   & \\
     \cdots &   & 0 &   & 1 &   & 1 &   & 1 &   & 1 &   & 1 &   & 2 &   & 3 &   & 3 &   & 3 &   & 3 &   & 3 & \cdots \\
            & 0 &   & 0 &   & 1 &   & 1 &   & 1 &   & 1 &   & 2 &   & 2 &   & 3 &   & 3 &   & 3 &   & 3 &   & \\
     \cdots &   & 0 &   & 0 &   & 1 &   & 1 &   & 1 &   & 2 &   & 2 &   & 2 &   & 3 &   & 3 &   & 3 &   & 4 & \cdots \\
            & 0 &   & 0 &   & 0 &   & 1 &   & 1 &   & 2 &   & 2 &   & 2 &   & 2 &   & 3 &   & 3 &   & 4 &   & \\
     \cdots &   & 0 &   & 0 &   & 0 &   & 1 &   & 2 &   & 2 &   & 2 &   & 2 &   & 2 &   & 3 &   & 4 &   & 4 & \cdots \\
            & 0 &   & 0 &   & 0 &   & 0 &   & 1 &   & 1 &   & 1 &   & 1 &   & 1 &   & 1 &   & 2 &   & 2 &   &  \\
    \cdots  & 0 &   & 0 &   & 0 &   & 0 &   & 1 &   & 1 &   & 1 &   & 1 &   & 1 &   & 1 &   & 2 &   & 2 &   & \cdots
    \end{tikzcd}\]
    \caption{Some values of $\dim_\k\Ext(-,M) = \dim_\k\Hom(\tau^{-1}M,-)$ for a module in $\mathcal{P}(\k Q^\TT)$ of type $\widetilde{D}_{8}$.}
    \label{fig:example-ext-dims-edge}
\end{figure}

\begin{theorem}
    Let $(\gamma_1, \kappa_1), (\gamma_2, \kappa_2) \in \mathcal{P}(\TT)$ where $\TT$ is a triangualtion of a twice-punctured $n$-gon $\surf$ and let $M(\gamma_1, \kappa_1) = M_1, M(\gamma_2, \kappa_2) = M_2 \in \mathcal{P}(\k Q^\TT)$ be the corresponding modules under the equivalence $\varphi$.
    Then $$\Int( (\gamma_1,\kappa_1), (\gamma_2,\kappa_2) ) = \dim_\k \Hom (M_2, \tau M_1) + \dim_\k \Hom (M_1, \tau M_2).$$
\end{theorem}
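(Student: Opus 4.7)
The plan is to use the equivalence $\varphi$ from Section~6 together with the Auslander-Reiten formula (Corollary~\ref{ext-hom}) to reduce the statement to a direct computation on both sides as functions of the relative coordinates $(a,b)_{M_1}^\kappa$ of $M_2$ with respect to $M_1$ (Definition~\ref{def-relativecoord}). Since $\varphi\circ\rho=\tau\circ\varphi$ and $\varphi$ identifies projective tagged edges with indecomposable projectives (Proposition~\ref{prop-base}), both quantities transform compatibly under the translations $\rho$ and $\tau$, so it suffices to pin down each as an explicit function of the position of the second argument within $\mathcal{P}(\k Q^\TT)\cong\mathbb{N}(Q^\TT)^{op}$ and to check that they coincide.

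On the module side, I would fix $M_1$ and compute $\dim_\k\Hom(M_1,\tau M_2)+\dim_\k\Hom(M_2,\tau M_1)$ as $M_2$ varies through $\mathcal{P}(\k Q^\TT)$. Using the three types of AR meshes in the $\widetilde{D}_n$-preprojective component (Figure~\ref{fig:meshes}), applying $\Hom(-,N)$ to an almost split sequence gives a propagation rule on Hom-dimensions; iterating this rule, starting from its values at the projectives (which can be read off from dimension vectors), reproduces the piecewise-linear pattern exhibited in Figures~\ref{fig:example-ext-dims} and~\ref{fig:example-ext-dims-edge}. The support of nonzero contributions is a slanted rectangular region emanating from $M_1$, together with two thin ``legs'' that record the tagging data, and whose shape reflects the two arms of the affine $\widetilde{D}_n$ quiver.

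On the curve side, I would fix $(\gamma_1,\kappa_1)$ and study how $\Int((\gamma_1,\kappa_1),(\gamma_2,\kappa_2))$ changes under each of the three classes of elementary moves defined in Section~4.1. Because $\rho$ preserves adjacency and the elementary moves realize the AR meshes on the curve side, each move alters the intersection number by a local, easily-computed correction: shifting an endpoint across a marked point either adds or deletes one normal intersection with $\gamma_1$, while passing through a puncture toggles the tagging and changes the punctured-intersection count by an amount controlled by Definition~\ref{def-puctintersection}. The base case is $(\gamma_2,\kappa_2)\in PTE(\TT)$, where Corollary~\ref{cor-int-mod} identifies the intersection number with entries of the dimension vector of $M_1$, matching $\dim_\k\Hom(P(i),\tau M_1)+\dim_\k\Hom(M_1,\tau P(i))$ by standard projective Hom calculations. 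Induction on the level of $(\gamma_2,\kappa_2)$ then promotes the base case to all of $\mathcal{P}(\TT)$.

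The hardest part will be the combinatorial case analysis required to prove that the curve-side and module-side recursions actually coincide term by term. The $\widetilde{D}_n$-quiver has two arms at each end of its central path, which correspond geometrically to the two punctures and their two possible tags, so the rectangular support region on the module side acquires tag-sensitive legs, while the curve-side recursion splits into subcases depending on whether the endpoint being shifted is adjacent to a puncture and whether the competing curve $\gamma_1$ itself terminates in a puncture. Verifying that these corrections track one another exactly through the mesh recursion, and that the symmetric sum cleanly absorbs both orderings $M_1\leftrightarrow M_2$ (including the self-intersection case $M_1=M_2$, where one must use the ``homotopic as untagged edges'' clause of Definition~\ref{def-puctintersection} to avoid overcounting), is where the bulk of the work sits. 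I would organize the verification by the type of $(\gamma_1,\kappa_1)$ (both endpoints in $\MM$ versus one in $\PP$) and, within each type, region-by-region in the relative coordinates, reducing each region to a short local computation and then assembling them via the inductive step.
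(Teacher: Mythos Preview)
Your strategy is valid but organized quite differently from the paper's. The paper does not run an induction on the level of $(\gamma_2,\kappa_2)$ via mesh recursions; instead, after disposing of the cases $M_1=M_2$ and $level(M_1)=level(M_2)$, it assumes without loss of generality that $level(M_1)>level(M_2)$ (so $\Hom(M_1,\tau M_2)=0$), fixes $M_2$, and writes down $\dim_\k\Hom(\tau^{-1}M_2,-)$ explicitly across $\mathcal{P}(\k Q^\TT)$ as a hammock pattern of ``maximal slanted rectangles'' (when $M_2$ is in the type~$A_n$ part) or ``maximal triangles'' (when $M_2$ is in a type~$D_n$ part), with a separate formula for the two type~$D_n$ strips. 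It then matches these regional formulas against the intersection counts, case by case, largely by inspection of the figures. Your inductive scheme is conceptually tidier: if you can show once and for all that $\Int((\gamma_1,\kappa_1),-)$ satisfies the same almost-split additivity on curve-meshes that $\dim_\k\Hom(M,-)$ satisfies on AR-meshes (with the usual defect of $1$ at the mesh endpoint equal to $M$), the whole theorem follows from the base case. The paper's route avoids isolating that lemma at the cost of a more ad hoc region-by-region verification.

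One point in your base case needs adjustment. Corollary~\ref{cor-int-mod} computes $\Int((\gamma,\kappa),i)$ for $i\in\TT$, not for projective tagged edges. If $(\gamma_2,\kappa_2)\in PTE(\TT)$ with $\rho(\gamma_2,\kappa_2)=i\in\TT$, the right-hand side of the theorem is $\dim_\k\Hom(P(i),\tau M_1)=(\udim\,\tau M_1)_i$, which by the corollary equals $\Int(\rho(\gamma_1,\kappa_1),i)$ rather than $\Int((\gamma_1,\kappa_1),(\gamma_2,\kappa_2))$. To close the base case you must invoke the fact that $\rho$ preserves intersection numbers, $\Int((\gamma_1,\kappa_1),(\gamma_2,\kappa_2))=\Int(\rho(\gamma_1,\kappa_1),\rho(\gamma_2,\kappa_2))$; this is true (and implicit in the remark that $\rho$ preserves adjacency and self-intersections) but is not the statement of Corollary~\ref{cor-int-mod}, and your write-up should make the extra step explicit.
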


\begin{proof}
    If $M_1 = M_2$, then $(\gamma_1, \kappa_1) \sim (\gamma_2, \kappa_2)$. 
    Therefore, $$\Int( (\gamma_1,\kappa_1), (\gamma_1,\kappa_1) ) = 0 = 2\dim_\k \Hom (M_1, \tau M_1).$$
    We can be sure that no preprojective tagged edges have self-intersections since these are preserved under the action of $\rho$; on the other hand, there are no homomorphisms from a module to its Auslander-Reiten translation.

    Assume $M_1 \neq M_2$. 
    If $level(M_1) = n = level(M_2)$, both $\Hom$ spaces are zero since there will be no sequences of irreducible morphisms from one module to the Auslander-Reiten translate of the other (in other words, no directed paths from one module to the other in $\mathcal{P}(\k Q^\TT)$).
    On the other hand, the curves $(\gamma_1, \kappa_1)$ and $(\gamma_2, \kappa_2)$ representing these modules must also have $level(\gamma_1, \kappa_1) = n = level(\gamma_2, \kappa_2)$ due to the equivalence of categories.
    Therefore, both curves are the same number of tagged rotations $\rho$ away from elements in the triangulation. 
    Since $\rho$ preserves adjacency and all elements of the triangulation are non-crossing by definition, it must be the case that $\Int( (\gamma_1,\kappa_1), (\gamma_2,\kappa_2) ) = 0$.

    If, without loss of generality, $level(M_1) = n_1 > n_2 = level(M_2)$, then $\Hom(M_1, \tau M_2) = 0$. 
    Therefore, all of the homological data will come from $\Hom(M_2, \tau M_1)$ as long as the relative coordinates for $\tau M_1$ are $(a,b)^\kappa_{M_2}$ with $a \geq 0$.
    If $a < 0$, then $\Hom(M_2, \tau M_1) = 0$ as well.
    We have a few cases to consider since each of $(\gamma_1, \kappa_1)$ and $(\gamma_2, \kappa_2)$ can either be a curve with both endpoints in $\MM$ or a curve with one endpoint in $\MM$ and the other in $\PP$.
    Even still, for the curves ending in a puncture, there are two punctures to end in.
    The easiest way to proceed is to fix $M_2 = M(\gamma_2, \kappa_2)$ and consider the possible relative position of $\tau M_1 = \tau M(\gamma_1, \kappa_1)$.
    
    There are four cases for the relative position of two modules in $\mathcal{P}(\k Q^\TT)$: both modules are in the ``middle part'' or the ``type $A_n$ part'' (for example, the module $M$ in Figure~\ref{fig:example-ext-dims} is in the type $A_n$ part), one module is in the type $A_n$ part and the other is in the ``type $D_n$ part'' (the modules $M$ in Figure~\ref{fig:example-ext-dims-edge} is in the type $D_n$ part), both modules are in the same type $D_n$ part, or both modules are in different type $D_n$ parts. 
    Since some of these cases can be combined in the proof, we have three cases to consider.

    \underline{Case 1:} $M_2$ is in the type $A_n$ part of $\mathcal{P}(\k Q^\TT)$.

    In this case, $\gamma_2$ has both endpoints in $\MM$ so any intersections between the curves will be normal intersections. 
    We know that for some $N$ in $\mathcal{P}(\k Q^\TT)$, $\Hom(N, -)$ can be calculated by drawing ``maximal slanted rectangles'' or ``maximal hammocks'' (cf. the arrows included in Figure~\ref{fig:example-ext-dims}).
    By Corollary~\ref{ext-hom} and since we have fixed $M_2$, it is slightly more natural to work with $\Hom(\tau^{-1}M_2, M_1)$ when verifying the statement of the theorem.

    Figure~\ref{fig:example-ext-dims} shows us the pattern that $\dim_\k \Hom(\tau^{-1}M, -)$ follows.
    What is happening in terms of preprojective tagged edges is shown in Figure~\ref{fig:example-curves} for a specific example.
    We can see that moving along a diagonal in $\mathcal{P}(\TT)$ fixes one endpoint of a curve and acts by inverse elementary moves on the other end.
    If $M_1$ is also in the type $A_n$ part, each inverse elementary move will either keep the intersection number the same as it was or increase it by 1 in precisely the same way that the maximal slanted rectangles capture the increases in the homological data.
    
    To describe this pattern of maximal slanted rectangles for a general category of type $\widetilde{D}_n$ requires a combination of coordinates and relative coordinates.
    Since $M_2$ is in the type $A_n$ part, it will have coordinates $(n_2,i)$ where $3 \leq i \leq (n-1)$ and $\tau^{-1}M_2$ will have coordinates $(n_2+1,i)$.
    For the sake of brevity, assume that $i \leq \frac{n+1}{2}$ so that $M_2$ is in the ``top half'' of the type $A_n$ part (like $M$ in Figure~\ref{fig:example-ext-dims}).
    All of the slanted rectangles have a semi-perimeter of $n-4$, initial corners at $((n-2)\ell, 0)^\emptyset_{\tau^{-1}M_2}$ or $(i-2 + (n-2)\ell, (n-4)-(i-3))^\emptyset_{\tau^{-1}M_2}$ for $\ell \in \mathbb{N}_0$, and terminal corners at $(i-3 + (n-2)\ell, (n-4)-(i-3))^\emptyset_{\tau^{-1}M_2}$ or $((n-3)\ell, 0)^\emptyset_{\tau^{-1}M_2}$ (respectively) for $\ell \in \mathbb{N}$. The modules that live on or inside of the maximal slanted rectangles have $\Int( (\gamma_1,\kappa_1), (\gamma_2,\kappa_2) ) = \dim_\k \Hom(\tau^{-1}M_2,M_1) = 2s-1$ where $s$ indicates which maximal slanted rectangle $M_1$ belongs to counting from left to right.
    The modules $M_1$ between the $s_i$ and $s_{i+1}$ maximal slanted rectangles have $\Int( (\gamma_1,\kappa_1), (\gamma_2,\kappa_2) ) = \dim_\k \Hom(\tau^{-1}M_2,M_1) = 2s_i$.
    
    If $M_1$ is in the type $D_n$ part, the situation is slightly more subtle. 
    The move from the type $A_n$ part to the type $D_n$ part happens when an inverse elementary move will cause a curve to cut out a once-punctured monogon, call it $\lambda$.
    In this case, we replace $\lambda$ with the two preprojective tagged edges for which $\lambda$ is their completion.
    In order to see what is going on in terms of the intersection data, it is easiest to follow a diagonal into the type $D_n$ part of $\mathcal{P}(\TT)$ and then move horizontally using $\rho^{-1}$ until you reach $(\gamma_1, \kappa_1)$.
    Comparing the intersection pattern in Figure~\ref{fig:example-curves} with the $\Hom$ dimensions in the type $D_n$ part of Figure~\ref{fig:example-ext-dims}, we can verify that these two share the same pattern.
    
    Namely, if $M_2$ has coordinates $(n_2,i)$ where $3 \leq i \leq (n-1)$ and $\tau^{-1}M_2$ will has coordinates $(n_2+1,i)$, then the pattern of $\Int( (\gamma_1,\kappa_1), - ) = \dim_\k \Hom(\tau^{-1}M_2,-)$ in the upper type $D_n$ part beginning at $(0, i-2)^\kappa_{\tau^{-1}M_2}$ and moving to the right is $(n-i)$ 1's, $(i-2)$ 2's, $(n-i)$ 3's, $(i-2)$ 4's, and so on.
    In the lower type $D_n$ part, the pattern of $\Int( (\gamma_1,\kappa_1), - ) = \dim_\k \Hom(\tau^{-1}M_2,-)$ beginning at $(0, n-i)^\kappa_{\tau^{-1}M_2}$ and moving to the right is $(i-2)$ 1's, $(n-i)$ 2's, $(i-2)$ 3's, $(n-i)$ 4's, and so on.
    The easier way to describe this pattern is by keeping track of the corners of the maximal slanted rectangles which touch the type $D_n$ parts.
    If $M_1$ is in either type $D_n$ part, it will be between two corner points of neighboring maximal slanted rectangles, call the rectangles $s_i$ and $s_{i+1}$. 
    Then $\Int( (\gamma_1,\kappa_1), (\gamma_2,\kappa_2) ) = \dim_\k \Hom(\tau^{-1}M_2,M_1) = i$. 

    Therefore, $$\Int( (\gamma_1,\kappa_1), (\gamma_2,\kappa_2) ) = \dim_\k \Hom (\tau^{-1}M_2, M_1) = \dim_\k \Hom (M_2, \tau M_1).$$

    \underline{Case 2:} $M_2$ is in the type $D_n$ part and $M_1$ is in the type $A_n$ part of $\mathcal{P}(\k Q^\TT)$.

    Again, all intersections will be normal intersections since $\gamma_1$ has both endpoints in $\MM$.
    This case follows a similar argument as the previous subcase but the roles of $M_1$ and $M_2$ are reversed.
    Assume that $\tau^{-1}M_2$ is in the top type $D_n$ part with coordinates $(n_2+1,1)$ or $(n_2+1,2)$ so that we are in a case similar to that of Figure~\ref{fig:example-ext-dims-edge}.
    In this case, instead of maximal slanted rectangles in the type $A_n$ part, we get ``maximal triangles''.
    The odd-numbered triangles (first, third, fifth, etc.) have vertices $((n-2)\ell, -1)^\emptyset_{\tau^{-1}M_2}, (n-4 + (n-2)\ell, -1)^\emptyset_{\tau^{-1}M_2},$ and $((n-2)\ell, -(n-3))^\emptyset_{\tau^{-1}M_2}$ for $\ell \in \mathbb{N}_0$.
    On the other hand, the even-numbered triangles have vertices $(1 + (n-2)\ell, -(n-3))^\emptyset_{\tau^{-1}M_2}, ((n-2)(\ell+1) - 1, -(n-3))^\emptyset_{\tau^{-1}M_2},$ and $(n-3 + (n-2)\ell, -1)^\emptyset_{\tau^{-1}M_2}$ for $\ell \in \mathbb{N}_0$.

    Again, $$\Int( (\gamma_1,\kappa_1), (\gamma_2,\kappa_2) ) = \dim_\k \Hom (\tau^{-1}M_2, M_1) = \dim_\k \Hom (M_2, \tau M_1).$$

    \underline{Case 3:} $M_2$ and $M_1$ are in the type $D_n$ parts of $\mathcal{P}(\k Q^\TT)$.

    If $M_2$ and $M_1$ are not in the same type $D_n$ part, then $(\gamma_2,\kappa_2)$ and $(\gamma_1,\kappa_1)$ will have endpoints in different punctures.
    Therefore, all intersections will be normal intersections.
    Figure~\ref{fig:example-ext-dims-edge} shows $\dim_\k \Hom(\tau^{-1}M_2, M_1)$ when $M_2$ is in the type $D_n$ part of $\mathcal{P}(\k Q^\TT)$.
    Comparing this with Figure~\ref{fig:example-curves} shows the intersection patterns for two curves in opposite type $D_n$ parts of $\mathcal{P}(\TT)$.
    The $(n-2)$ periodic behavior in the category of preprojective tagged edges is explained by the number of marked points on the boundary.

    Say $M_2$ has coordinates $(n_2,i)$ where $i = 1,2$ and $\tau^{-1}M_2$ has coordinates $(n_2+1,i)$.
    Since $M_1$ is in the other type $D_n$ part, the pattern begins at $(0,n-2)^\kappa_{\tau^{-1}M_2}$.
    If $M_1$ has relative coordinates $(a,n-2)^\kappa_{\tau^{-1}M_2}$ where $a = (n-2)\ell + r$ and $r < (n-2)$, $\Int( (\gamma_1,\kappa_1), (\gamma_2,\kappa_2) ) = \dim_\k \Hom (\tau^{-1}M_2, M_1) = \ell$. 

    If $M_2$ and $M_1$ are in the same type $D_n$ part, then $(\gamma_2,\kappa_2)$ and $(\gamma_1,\kappa_1)$ will have endpoints in the same puncture.
    Therefore, intersections will involve normal as well as punctured intersections.
    The alternating pattern seen in the upper type $D_n$ part of Figure~\ref{fig:example-ext-dims-edge} is explained by the tagged rotation alternating the tagging on the preprojective tagged edges. 
    The change from the $0-1$ pattern to the $1-2$ pattern occurs when a normal intersection is introduced.

    Say $M_2$ has coordinates $(n_2,i)$ where $i = 1,2$, $\tau^{-1}M_2$ has coordinates $(n_2+1,i)$, and $\tau^{-1}M_2$ has tagging $\kappa$.
    Since $M_1$ is in the same type $D_n$ part, the pattern begins at $(0,0)^\kappa_{\tau^{-1}M_2}$.
    If $M_1$ has relative coordinates $(a,0)^\kappa_{\tau^{-1}M_2}$ where $a = (n-2)\ell + r$ and $r < (n-2)$, $\Int( (\gamma_1,\kappa_1), (\gamma_2,\kappa_2) ) = \dim_\k \Hom (\tau^{-1}M_2, M_1) = \ell - 1$.
    If $M_1$ has relative coordinates $(a,0)^{1-\kappa}_{\tau^{-1}M_2}$ where $a = (n-2)\ell + r$ and $r < (n-2)$, $\Int( (\gamma_1,\kappa_1), (\gamma_2,\kappa_2) ) = \dim_\k \Hom (\tau^{-1}M_2, M_1) = \ell$.

    In any case, we have $$\Int( (\gamma_1,\kappa_1), (\gamma_2,\kappa_2) ) = \dim_\k \Hom (M_2, \tau M_1) + \dim_\k \Hom (M_1, \tau M_2).$$
\end{proof}

\begin{corollary}[Theorem A]
    Let $\surf$ be a triangulated, twice-punctured marked surface whose triangulation $\TT$ corresponds to an acyclic quiver $Q^\TT$ of type $\widetilde{D}_n$. 
    Then given any two preprojective tagged edges $(\gamma_1, \kappa_1)$ and $(\gamma_2, \kappa_2)$ (not necessarily distinct), 
    $$\Int((\gamma_1, \kappa_1), (\gamma_2, \kappa_2)) = \dim_\k \Ext^1 (M_1, M_2) + \dim_\k \Ext^1 (M_2, M_1)$$ where $M_i = M(\gamma_i,\kappa_i)$ and $\Int$ is the intersection number between two admissible tagged edges.
\end{corollary}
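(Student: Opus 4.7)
The plan is to deduce Theorem~A as an immediate consequence of the preceding theorem combined with the Auslander-Reiten formulas recorded in Corollary~\ref{ext-hom}. The preceding theorem has already established
$$\Int((\gamma_1,\kappa_1),(\gamma_2,\kappa_2)) = \dim_\k \Hom(M_2,\tau M_1) + \dim_\k \Hom(M_1,\tau M_2),$$
so the only remaining task is a change of notation on the right-hand side. I would invoke Corollary~\ref{ext-hom} twice: once with $(M,N)=(M_1,M_2)$ to replace $\dim_\k \Hom(M_2,\tau M_1)$ by $\dim_\k \Ext^1(M_1,M_2)$, and once with $(M,N)=(M_2,M_1)$ to replace $\dim_\k \Hom(M_1,\tau M_2)$ by $\dim_\k \Ext^1(M_2,M_1)$. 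Adding these two equalities yields exactly the formula claimed in Theorem~A.

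The use of Corollary~\ref{ext-hom} is legitimate because $\k Q^\TT$ is hereditary: the quiver $Q^\TT$ is acyclic, so $\k Q^\TT$ has global dimension at most $1$ and all higher $\Ext$ groups vanish, in line with the classical setup summarized in Section~2. No further hypotheses on $M_1,M_2$ are needed beyond their being finite-dimensional indecomposable preprojective modules, which is guaranteed by the identification $M_i = M(\gamma_i,\kappa_i)$ coming from the equivalence $\varphi$ of Section~6.

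The one subtlety worth flagging is the boundary case in which some $M_i$ is projective, so that $\tau M_i = 0$; here $\Hom(-,\tau M_i)$ and $\Ext^1(M_i,-)$ vanish simultaneously, so the Auslander-Reiten isomorphism holds trivially and both sides of Theorem~A pick up a zero contribution from that summand. This matches the geometric picture, in which $(\gamma_i,\kappa_i)\in PTE(\TT)$ sits at the ``beginning'' of $\mathcal{P}(\TT)$ and any intersections with other preprojective curves are accounted for by the other summand. The genuine mathematical content, namely the case-by-case identification of intersection patterns with hammock dimensions in the three regimes (type $A_n$--type $A_n$, type $A_n$--type $D_n$, and type $D_n$--type $D_n$), has already been handled in the preceding theorem, so no new obstacle arises at this step; the passage from $\Hom$ to $\Ext^1$ is strictly formal.
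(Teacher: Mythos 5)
Your proposal is correct and takes essentially the same route as the paper: the paper's proof is precisely the two-line application of Corollary~\ref{ext-hom} to rewrite each $\Hom(M_j,\tau M_i)$ summand from the preceding theorem as $\Ext^1(M_i,M_j)$. The extra remark about the projective boundary case ($\tau M_i = 0$) is a sound sanity check, though the paper does not bother to spell it out.
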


\begin{proof}
    By Corollary~\ref{ext-hom}, $\dim_\k \Ext^1 (M_1, M_2) = \dim_\k \Hom (M_2, \tau M_1).$
    The result is immediate.
\end{proof}

\begin{comment}
%%%%%%%%%%%%%%%%%%%%%%%%%%%%%%%%%%%%%%%%%%%%%%%%%%%%%%%%%%%%%%%%%
\section{Roots of Type $\widetilde{D}_n$ and the Role of Mutation in Triangulations}
%%%%%%%%%%%%%%%%%%%%%%%%%%%%%%%%%%%%%%%%%%%%%%%%%%%%%%%%%%%%%%%%%

\begin{theorem}[\cite{kac_infinite_1980}, Theorem 1, part b]\label{kac-roots}
    Let $Q$ be a quiver and $G$ be the associated unoriented graph. 
    The irreducible modules in $\mod \k Q$ are in one-to-one correspondence with the positive real roots of the Kac-Moody algebra corresponding to $G$.
\end{theorem}
\end{comment}

%% Generates the bibliography.
%\nocite{*}
\bibliography{references}
\bibliographystyle{amsplain}

%\printbibliography

\end{document}